\documentclass[11pt,a4paper]{article}

\usepackage{amsmath, amsfonts, amssymb}
\usepackage{color}

\def\be#1\ee{\begin{equation}#1\end{equation}}

\newtheorem{thm}{Theorem}
\newtheorem{lem}[thm]{Lemma}
\newtheorem{prop}[thm]{Proposition}

\newtheorem{rem}[thm]{Remark}

\DeclareMathOperator{\cov}{cov}

\def\P{{\mathbb{P}}}
\def\R{\mathbb{R}}
\def\E{\mathbb{E}\,}



\def\dd{\mbox{d}}




\newenvironment{proof}[1][] {\noindent {\bf Proof#1:} }{\hspace*{\fill}$\square$\medskip\par}


\newcommand{\eps}{\varepsilon}

\def\NN{{\mathcal N}}

\def\WW{{\mathcal{W}}}

\def\tV{{\widetilde{V}}}
\def\ttau{{\widetilde{\tau}}}

\def\MA{{\mathcal{A}}}
\def\tU{{\widetilde{U}}}
\def\tV{{\widetilde{V}}}
\def\VVV{{ \Upsilon}}
\def\mix{{\rm mix}}
\def\ustar{{\widehat u}}

\def\covcoef{{\rm corr}}

\def\amu{\mu} 

\let\BFseries\bfseries\def\bfseries{\BFseries\mathversion{bold}} 

\def\tod{{~\stackrel{d}{\to}~}}     
\def\span{\overline{{\rm span}}}

\begin{document}

\title{Breaking a chain of interacting Brownian particles}
\author{Frank Aurzada, Volker Betz, and Mikhail Lifshits}
\maketitle

\begin{abstract}
We investigate the behaviour of a finite chain of Brownian particles,
interacting through a pairwise quadratic potential, with one end of the chain fixed and the other end  pulled away at slow speed,
in the limit of slow speed and small Brownian noise.

We study the instant when the chain ``breaks'', that is, the distance between two neighboring
particles becomes larger than a certain limit.  There are three different regimes depending
on the relation between the speed of pulling and the Brownian noise. We prove weak limit theorems for
the break time and the break position for each regime.

\end{abstract}

\noindent{\bf Keywords:}  Interacting Brownian particles; stochastic differential equations,
Ornstein--Uhlenbeck processes.
\medskip

\noindent {\bf 2010 Mathematics Subject Classification:} 60K35,  secondary: 60G15, 60H10, 60J70.


\newpage


\tableofcontents

\newpage

\section{Introduction and main results}
\subsection{Introduction}

Interacting Brownian particles are a popular model for various
physical systems where a (possibly large) number of particles is
subjected to inter-particle forces and ambient noise. An important
example are ensembles of colloidal particles, i.e. mesoscopic
particles suspended in a fluid. Approaches to model these particles
through interacting Brownian motions go back at least to
\cite{ErCam78}, and the investigation of
increasingly complex situations continues until today, see e.g.\
\cite{SKF2016}. Since the focus of this paper is on a detailed
investigation of a specific mathematical model, we will not attempt
to give an account of the vast physics literature on the topic, but
rather refer to the introduction of \cite{SKF2016} and to the review
article \cite{Sei12}.

The specific model that we are interested in consists of
a chain of $d+1$ interacting Brownian particles, where
nearest neighbour particles are coupled by a
harmonic potential, the leftmost particle is fixed, and the
rightmost particle is slowly pulled to the right. Furthermore, we
assume that the chain breaks whenever the distance between
two neighbouring particles exceeds a certain threshold. Both the
variance of the noise $\sigma^2$  and the speed
of pulling $\eps$ are assumed to be small, and we are interested in the
position and time of breakage depending on the relative behaviour
of $\sigma$ and $\eps$ as both go to zero. We refer to equation
\eqref{eqn:original} below for the precise definition. Models of this type have been
studied in the physics literature in the context of rupture of polymer
chains \cite{FS1,FS2,lee,sain}.

Our results
are an essentially complete description of the behaviour of the
chain in the following three regimes,
which cover almost all possible behaviours of $
\sigma$ and $\eps$. We always assume that both $\sigma$ and
$\eps$ converge to zero. The case of {\it fast pulling} is characterised by the condition
$\sigma/\eps \to 0$. Then the chain behaves as if $\sigma = 0$,
i.e.\ it breaks deterministically at the $d$-th bond, see Theorem \ref{thm:quasideterm}.
On the other end of the scale,
the case of {\it very slow pulling} occurs when
$\sigma^2 |\ln \eps| \to \infty$. For $\eps = 0$, Lee
\cite{lee} made (and numerically verified) the
somewhat surprising prediction that the chain
breaks at the first and the last bond with
probability $1/(2(d-1))$ each, and at any other bond with probability
$1/(d-1)$. We prove that this prediction is indeed accurate, and
that it extends to the whole regime of very slow pulling.
Moreover, we find that the (properly rescaled) time of breakage
follows an exponential distribution, see Theorem
\ref{thm:veryslowpulling}.
 The third regime, which we call {\it
moderately slow pulling}, occurs when
$\eps/\sigma \to 0$ but $\sigma^2 |\ln \eps| \to 0$. Here we
find that in this regime, the breaking probabilities of the bonds
are just like in the regime of very slow pulling,  but the breakage
time is distributed differently: its difference from the
the deterministic time $t_\ast = d/\eps$ follows a Gumbel
distribution after proper rescaling. This shows that the mechanism
that is responsible for the the break event is slightly different in the
cases of very slow and moderately slow pulling. While large deviations events
are present in both regimes, in the case of
very slow pulling one essentially deals with an exit event for a stationary
process over a high constant level; while in the
case of moderately slow pulling one encounters an exit over
a level linearly varying in time. The appearance of the Gumbel
distribution in exit problems is not new, see
\cite{Bak15, Ber16, Day90}.
However, in all those cases it appears in the
context of conditioning on an unlikely exit, while in our case, it
appears in the context of the most likely exit. To our knowledge,
this phenomenon has not been observed before.

Mathematically, our problem falls squarely into the class of exit
problems for stochastic processes. This is a topic having a
long tradition that started with the early non-rigorous works of
Kramers \cite{Kra40} and Eyring \cite{Ey35}, was significantly
influenced by the works of Freidlin and Wentzell \cite{FW98}, and
has been further refined by Berglund and Gentz
using sample path techniques \cite{BeGe06} and by Bovier and his
collaborators using tools from potential theory, see Chapter 11
of \cite{BovH15}. We refer to the latter two monographs for further
information on the available literature.

Let us discuss already here the technical
differences between our approach and those cited above:
the method of potential theory relies on PDE connections and in
particular the generator of the process. The obstacle when trying
to apply these methods in our case is that one would have to deal with a
PDE on a domain that changes in time. The theory of partial
differential equations seems not to offer many useful quantitative
results in these cases. While in the regime of very slow pulling
the methods of \cite{BovH15} would presumably lead to at least
partial results, the
case of moderately slow pulling seems not tractable by these
methods. So instead, we rely on detailed large deviation results
for Gaussian processes, subdivision into suitable time
intervals, and
decoupling inequalities. This leads to sharp estimates on exit
time distributions and is somewhat related to the ideas
present in the approach pioneered by Berglund and Gentz
\cite{BeGe06}, see also \cite{BGK12, BGK15} for applications in
the context of fast-slow systems and mixed mode oscillations,
and \cite{BG14} for the passage through an unstable orbit.
Again, our model differs from those mentioned above by the presence of
the explicit time dependence, which prevents us from using large
parts of the established theory without significant loss of precision;
on the other hand, since we aim for less generality, our
results are sharper than most.
Finally, since we do not use semigroup theory or the generator at all,
many of our results are actually
valid for more general Gaussian processes than those derived from
\eqref{eqn:original}, see e.g.\ Lemma~\ref{lem:veryslowpullingstatlemma},
which may be of independent interest.

%
%
%
%
%

The model \eqref{eqn:original} has been treated before by Allman and Betz \cite{allman_betz_09}, and Allmann, Betz and Hairer
\cite{allman_betz_11}. In the present
paper, we significantly improve the results of \cite{allman_betz_09} in
several ways. First, we remove a logarithmic
gap between the regimes of fast and moderately slow pulling that
was present in \cite{allman_betz_09}. Second, we treat the regime
of very slow pulling, which was left out there. Third, and most
importantly, we are able to treat chains of arbitrary length, while
the results in \cite{allman_betz_09} were restricted to $d=2$,
i.e.\ only one mobile particle. The detailed results
on the exit times are also new.

So apart from the interesting transitional
regimes $\sigma \sim \eps$ and $\sigma^2 |\ln \eps| \sim 1$
where we do not know what happens, the model \eqref{eqn:original}
is now fairly well under control. However, the  simplifying assumption that the potential is quadratic
(or, more generally, convex), and the chain breaks at some
pre-determined bond length, is not harmless: if switching to a more
realistic potential (such as Lennard--Jones), the breaking behaviour of the
chain in dependence $\eps$ and $\sigma$ changes significantly,
see \cite{allman_betz_11} for the case $d=3$.
Nothing is known in these cases for chains of arbitrary length.

This paper is organized as follows. In Section \ref{sec:model}, we introduce the model
and the objects of our study. In Section~\ref{sec:mainresults}, we state the main
results, that is, we distinguish the three mentioned regimes and provide the limit
theorems for the break time and break position in
each regime.

Section~\ref{sec:prelim} is devoted to the key necessary technical tools; in particular,
we provide some useful representations of the solution to our model.
The proofs of the main results in the three regimes are given in Sections~\ref{sec:determ},
\ref{sec:nopulling}, and \ref{sec:moderate}, respectively.


\subsection{The model}
\label{sec:model}

Let $d\ge 1$ be an integer. We consider a chain of $d+1$ particles located
on the real line. At time $t\ge 0$, the locations of the particles are denoted by $X^0_t,\dots, X^d_t$,
and we assume that they satisfy the following system of stochastic differential equations:
\begin{equation} \label{eqn:original}
   \begin{cases}
     X_0^i = i & i=0,1,\ldots, d;\\
     X_t^0 = 0 & t\geq 0;\\
     X_t^d = d+\eps t & t\geq 0;\\
     \dd X_t^i = (X^{i+1}_t+X^{i-1}_t -2 X^i_t)\dd t+\sigma \dd B_t^i,& i=1,\ldots,d-1, t\geq 0,
   \end{cases}
\end{equation}
where $(B_t^i)_{t\geq 0}$ are independent Brownian motions, $i=1,\ldots,d-1$, $\sigma\ge 0$, and $\eps\ge 0$.

This means that one border particle (number $0$) is fixed at zero, another border particle (number $d$)
is moving at a constant speed $\eps$, and the neighboring particles interact according to the last (main)
equation.

We study the behaviour of the system when $\sigma\to 0$ and $\eps\to 0$. Our interest is focused on the
study of the time when the chain of interacting particles ``breaks'' and on understanding at which place
of the chain the break occurs.

Let us introduce the corresponding terminology and notation. We say that
the chain {\it breaks} at time $t$ if the maximal distance between the neighboring
particles attains $2$ for the first time at $t$. Formally, for $i\in\{1,\ldots,d\}$ let
$$ 
  \tau^i := \inf\{ t\geq 0 ~|~ X^i_t - X^{i-1}_t = 2 \}
$$ 
 and define the {\it break time} as
$$ 
  \tau:= \min_{1\le i \le d} \tau^i
    = \inf\{ t\geq 0 : \exists i\in\{1,\ldots,d\} ~:~  X^i_t - X^{i-1}_t = 2 \}.
$$ 

From the geometric point of view based on the observation of the process
${\bf X}_t:=(X^0_t,\dots,X^d_t)^\top$, the break time simply means the exit time
of ${\bf X}$ from a certain deterministic polytope.
Accordingly, we also call $\tau^i$, $\tau$, and other similar variables {\it exit times}.

We are also interested in the asymptotics of the distribution of the position where break occurs,
i.e.\ $(\P(\tau=\tau^i))_{i=1,\ldots,d}$.

A simple, but important observation is that there is a trivial deterministic bound for the break time
(unless $\eps=0$). Indeed, define $t_\ast$ by
$$ 
   t_\ast := \frac{d}{\eps}.
$$ 
Then for $t=t_\ast$ we have
\begin{eqnarray*} 
  \max_{i\in\{1,\ldots,d\}} \left(  X^i_t - X^{i-1}_t\right)
   &\ge&
  \frac{1}{d} \sum_{i=1}^{d} \left(  X^i_t - X^{if-1}_t\right)
\\ 
  &=& \frac{1}{d} \left(X^d_t-X^0_t\right)= \frac{d+\eps t}{d} = 2,
\end{eqnarray*}
and we see that $\tau\le t_\ast$, i.e. the exit (break) occurs
prior to $t_\ast$.

\subsection{Main results} \label{sec:mainresults}
We have to distinguish three different regimes: fast pulling $\sigma/\eps\to 0$ (or $\sigma=0$);
moderately slow pulling $\eps/\sigma\to 0$ and $\sigma^2 |\ln \eps| \to 0$;
and very slow pulling $\sigma^2 |\ln \eps| \to \infty$ (or $\eps=0$).

\paragraph{Fast pulling regime and deterministic setting ($\sigma=0$).}

Let us first consider the fast pulling regime, where we assume that $\sigma/\eps\to 0$ (or $\sigma=0$).
In this case, the limiting exit distribution is concentrated on the last exit position and the exit time
is deterministic up to lower order terms.

\begin{thm}  \label{thm:quasideterm}
Let $\eps\to 0, \sigma/\eps \to 0$ or let $\sigma=0$. Then the asymptotic probabilities of the exit
positions are described by the formula
\[
  \P(\tau=\tau^i) \to \begin{cases}
   1& i=d,
\\
   0& 1\le i\le d-1.
  \end{cases}
\]

Further,
$$
    \tau = t_\ast-\frac{(d-1)(2d-1)}{6} +o_P(1).
$$
\end{thm}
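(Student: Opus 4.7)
The plan is to pin down the deterministic skeleton of the chain exactly, locate the break at bond $d$ at a precise time, and then show that a noise of magnitude $\sigma = o(\eps)$ cannot alter this picture.

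Decompose the solution of \eqref{eqn:original} as $X_t^i = a_t^i + w_t^i + \sigma Z_t^i$, where $a_t^i := i(d+\eps t)/d$ is the uniformly stretched affine profile (it satisfies the boundary conditions and is annihilated by the discrete Laplacian), $w_t^i$ is the deterministic correction with $w_0^i = w_t^0 = w_t^d = 0$, and $\sigma Z_t$ is the centred Gaussian part (also zero at $i = 0, d$ and at $t = 0$). The correction solves the forced linear ODE $\dot w_t^i = w_t^{i+1} + w_t^{i-1} - 2 w_t^i - i\eps/d$ for $i = 1, \dots, d-1$. The discrete Dirichlet Laplacian on $\{1, \dots, d-1\}$ has strictly negative eigenvalues $-\lambda_k = -4 \sin^2(k\pi/(2d))$, so $w_t$ relaxes at exponential rate $\lambda_1$ to the unique stationary solution $v$ of the discrete Poisson equation $v^{i+1} - 2 v^i + v^{i-1} = i\eps/d$, $v^0 = v^d = 0$, which one finds by inspection to equal $v^i = \frac{\eps}{6d}(i^3 - d^2 i)$.

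For the bond lengths $Y_t^i := X_t^i - X_t^{i-1}$ the decomposition then gives
\begin{equation*}
Y_t^i = \frac{d + \eps t}{d} + \frac{\eps(3i^2 - 3i + 1 - d^2)}{6d} + O(\eps\, \ex^{-\lambda_1 t}) + \sigma\bigl(Z_t^i - Z_t^{i-1}\bigr).
\end{equation*}
The $i$-increments of the deterministic correction $v^i - v^{i-1}$ equal $(i-1)\eps/d > 0$, hence the correction is strictly increasing in $i$. So for $\sigma = 0$ the last bond is the longest, reaches value $2$ first at time $\tau^d_0 = t_\ast - (d-1)(2d-1)/6 + o(1)$, and the asserted formula holds in the deterministic case. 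The same formula (with $\sigma = 0$) moreover gives the uniform gap $2 - Y_t^i \ge c_d \eps\bigl(1 + (\tau^d_0 - t)\bigr)$ on $[0, \tau^d_0]$ for every $i < d$, with a positive $d$-dependent constant $c_d$.

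When $\sigma > 0$ with $\sigma/\eps \to 0$, the process $Z$ is an Ornstein--Uhlenbeck-type Gaussian process driven by the same Dirichlet Laplacian, with stationary variance (reached at rate $\lambda_1$) uniformly bounded in $t$ and $\eps$. A spurious break at bond $i < d$ before the deterministic time $\tau^d_0$ would require $\sigma|Z_t^i - Z_t^{i-1}|$ to exceed the linearly growing gap $c_d \eps(1 + (\tau^d_0 - t))$; integrating the pointwise Gaussian tail $\exp\bigl(-c_d^2 \eps^2 (1+s)^2/(2\sigma^2)\bigr)$ with respect to $\d s$ on $[0, \tau^d_0]$ (for instance via a Rice-type up-crossing bound) yields a total probability of order $\sigma/\eps \to 0$. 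Hence $\P(\tau = \tau^d) \to 1$. Finally, $Y_t^d$ grows at deterministic rate $\eps/d$ near the break, so the $O_P(\sigma)$ Gaussian perturbation of $Y_{\tau^d_0}^d$ translates into an $O_P(\sigma/\eps) = o_P(1)$ perturbation of $\tau$. The main (mild) obstacle is precisely this last step: the naive uniform supremum bound $\sigma \sup_{t \le t_\ast} |Z_t| = O_P(\sigma \sqrt{|\ln \eps|})$ is not $o(\eps)$ under $\sigma/\eps \to 0$ alone, so one must exploit the linear growth of the deterministic gap in $\tau^d_0 - t$ via a level-crossing tail argument rather than a global supremum inequality.
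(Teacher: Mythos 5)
Your proposal is correct in substance and reaches the same decomposition as the paper's Lemmas~\ref{lem:lem1} and~\ref{lem:Delta_h_Z} (your $v^i$ is $\eps h_i$, and your bond-length correction is $\eps\delta_i$), but the way you rule out a break at a bond $i<d$ differs from the paper. The paper splits $[0,t_\ast]$ at $t_\ast-M$: on the short terminal window it compares bond $i$ against bond $d$ \emph{at the random time} $\tau^i$ (the event $\tau^i\le\tau^d$ forces $\sigma(V^i-V^d)\ge\eps(\delta_d-\delta_i+o(1))$, killed by Anderson's inequality and stationarity since the sup over a fixed-length window is $O_P(1)$ while the level is $\eps/\sigma\to\infty$), and on $[0,t_\ast-M]$ it uses the linearly growing deterministic gap. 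You instead use a single deterministic lower bound $2-Y^i_t\ge c_d\eps(1+(\tau^d_0-t))$ on all of $[0,\tau^d_0]$ and integrate the Gaussian tail; this works because even at the degenerate end the gap is of order $\eps\gg\sigma$. Your approach is a legitimate alternative and arguably more streamlined; the paper's comparison trick buys the same conclusion without having to track the constant $c_d$ through the transient near $t=0$.

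Two points need patching. First, the phrase ``integrating the pointwise Gaussian tail with respect to $\d s$ \dots\ via a Rice-type up-crossing bound'' is not quite right as stated: the processes $V^i$ have Brownian local regularity, so the classical Rice formula does not apply, and the integral of a pointwise tail is not by itself a bound on a supremum probability. What you need is a block decomposition into unit intervals together with a sup-tail bound per block of the same exponential order (Borell's inequality or the Pickands bound, Lemma~\ref{lem:pickands}); summing these over blocks reproduces your estimate. Second, you only exclude spurious breaks at bonds $i<d$ \emph{before} $\tau^d_0$, but $\tau^d$ can exceed $\tau^d_0$ by $o_P(1)$, so to conclude $\P(\tau=\tau^d)\to 1$ you must also exclude a break at $i<d$ on $[\tau^d_0,\tau^d]$; since there the gap is still $\ge\eps(\delta_d-\delta_i)(1-o_P(1))$ with $\delta_d-\delta_{d-1}=(d-1)/d>0$, the same tail bound closes this. (Also, $v^i-v^{i-1}$ is $\eps(3i^2-3i+1-d^2)/(6d)$, as you wrote earlier, not $(i-1)\eps/d$ — the latter is the increment $\delta_i-\delta_{i-1}$; the intended monotonicity conclusion is nevertheless correct.)
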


The proof of this theorem is given in Section~\ref{sec:determ}.

One can show that for each $i\in\{1,\ldots,d\}$
$$
  \tau^i = t_\ast + \frac{d^2-1}{6}-\frac{i(i-1)}{2} +o_P(1).
$$

\paragraph{Moderately slow pulling regime.}
This regime is characterized by the conditions
\begin{equation} \label{eqn:stretch}
    \sigma/\eps\to \infty \qquad \text{and} \qquad \sigma^2 |\ln \eps| \to 0.
\end{equation}
Define the parameters
\begin{equation}\label{eqn:parameterv0}
\begin{array}{l}
    v^2:=\frac{d-1}{2d},\qquad \gamma:=\sqrt{2} d v = \sqrt{d(d-1)},
\\
   A_1:=A_d:=\frac{d}{d-1},\qquad A_i:=\frac{2d}{d-1}, i\in\{2,\ldots,d-1\}.
\end{array}
\end{equation}
Recall that a random variable $\chi$ is {\it double exponential} (or Gumbel) with parameters $a,b>0$, if
$$
   \P( \chi \leq r ) = \exp( -a \exp( - b r)), \qquad r\in \R.
$$

The main result in the intermediate regime is as follows.

\begin{thm} \label{thm:intermediateslowpulling}
Assume that $\eqref{eqn:stretch}$ holds. Then, as $\eps,\sigma\to 0$,
\be \label{eqn:positions}
   \P(\tau=\tau^i) \to \begin{cases}
                           \frac{1}{d-1} & i\in\{2,\ldots,d-1\};\\
                           \frac{1}{2(d-1)} & i\in\{1,d\}.
                       \end{cases}
\ee
and we have the following weak limit theorems for the exit times:
$$ 
     \frac{\sqrt{\ln(\sigma/\eps)}}{\sigma/\eps}
        \left(t_\ast-\tau^i-\gamma \frac{\sigma}{\eps} \sqrt{\ln (\sigma/\eps)} \right)
     \tod \chi_i,\qquad i\in \{1,\ldots,d\},
$$ 
and
$$
     \frac{\sqrt{\ln (\sigma/\eps)}}{\sigma/\eps}
     \left(t_\ast-\tau-\gamma \frac{\sigma}{\eps} \sqrt{\ln (\sigma/\eps)}\right)
     \tod \chi_0,
$$
where $\chi_i$ is a double exponential random variable with parameters
$a_i:=vd A_i  / \sqrt{2\pi}$ for $i\in\{1,\ldots,d\}$,
$a_0:=\sum_{i=1}^d a_i = 2vd^2 /\sqrt{2\pi}$ and $b:=\sqrt{2}/(vd)$,
while $v$, $\gamma$, and the $A_i$ are defined in $\eqref{eqn:parameterv0}$.
\end{thm}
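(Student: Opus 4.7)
The plan is to exploit the Gaussian structure of the solution to \eqref{eqn:original} and reduce the break problem to a high-level crossing question for a family of asymptotically stationary Gaussian processes, to which Rice/Pickands-type extreme-value theory applies.

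By linearity of the SDE each bond length decomposes as $X^i_t - X^{i-1}_t = m^i(t) + \sigma Z^i_t$, where $m^i$ solves the deterministic ($\sigma=0$) problem and $Z^i$ is a centered Gaussian process obtained as an explicit linear functional of $(B^1,\ldots,B^{d-1})$ through the discrete Dirichlet Green's function of the chain. The relaxation time of the chain is of order $d^2$ and is thus negligible compared to $t_\ast = d/\eps$; beyond it, $Z^i$ is effectively stationary, and a direct calculation from the Green's function gives $\Var(Z^i_t) \to v^2 = (d-1)/(2d)$ for every $i$, together with $m^i(t) = 2 - \eps(t_\ast-t)/d + O(\eps)$. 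Under the announced parametrization $t = t_\ast - \gamma(\sigma/\eps)\sqrt{\ln(\sigma/\eps)} - r(\sigma/\eps)/\sqrt{\ln(\sigma/\eps)}$, the normalized threshold $u^i(s) := (2-m^i(s))/(\sigma v)$ thus behaves as
\[
   u^i(s) \;=\; \sqrt{2\ln(\sigma/\eps)} \;+\; \frac{b\,r}{\sqrt{2\ln(\sigma/\eps)}} \;+\; o\!\bigl(1/\sqrt{\ln(\sigma/\eps)}\bigr),
\]
that is, a slowly drifting high level, uniformly on the window of length $O((\sigma/\eps)\sqrt{\ln(\sigma/\eps)})$ around $t_\ast$.

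For a single bond I would invoke the Rice formula for up-crossings of the stationary Gaussian $W^i := Z^i/v$ over $u^i$: the expected number of up-crossings on the Gumbel window converges to $a_i\ex^{-br}$, the prefactor being $(2\pi)^{-1}\sqrt{\lambda_2^i/\lambda_0^i}\exp(-u^2/2)$ integrated against the slow linear drift of $u^i$. A direct computation from the Green's function shows that the spectral moment $\lambda_2^i$ of $Z^i$ at the boundary bonds ($i\in\{1,d\}$) is one quarter of its value at an interior bond, whence (via square root) the halving $A_1 = A_d = A_i/2$ for interior $i$; the constants $v,\gamma,b$ are fixed by matching the slope $-\eps/(d\sigma v)$ of $u^i$ at $t_\ast$ to the Gumbel scale. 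The standard Poisson approximation for up-crossings of stationary Gaussian processes at high levels (as in Leadbetter--Lindgren--Rootz\'en or Pickands) then yields $\P(\tau^i > t) \to \exp(-a_i\ex^{-br})$. For the joint limit over the $d$ bonds I would apply a Slepian/Berman-type comparison: the off-diagonal stationary covariances of $(Z^1,\ldots,Z^d)$ are strictly below $v^2$, so at level $\sqrt{2\ln(\sigma/\eps)}$ the substitution by independent copies costs only an $(\sigma/\eps)^{-c}$ error. Asymptotic independence then gives
\[
    \P(\tau > t) \;\to\; \exp(-a_0\ex^{-br}), \qquad a_0=\sum_i a_i = 2vd^2/\sqrt{2\pi},
\]
and by the thinning formula for Poisson extremes, $\P(\tau=\tau^i)\to a_i/a_0 = A_i/(2d)$, which is exactly \eqref{eqn:positions}.

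The main obstacle, I expect, is the control outside the Gumbel window, namely showing that with probability tending to $1$ no bond crosses level $2$ at any time $s \le t_\ast - C(\sigma/\eps)\sqrt{\ln(\sigma/\eps)}$ for $C$ large. This requires uniform Borell--TIS tail bounds for the suprema of the \emph{non-stationary} part of $Z^i$ on long intervals, and it is here that the hypothesis $\sigma^2|\ln\eps|\to 0$ is essential: it keeps the Gumbel window short relative to $t_\ast$, ensures that the chain has time to relax before the window opens, and gives exactly the right exponent in the a~priori estimate for early exits. A secondary subtlety is the bookkeeping of the prefactors through the Dirichlet Green's function to land on precisely the constants $v,\gamma,A_i,b$ of the statement.
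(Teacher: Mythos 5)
Your overall architecture (decompose into deterministic drift plus a Gaussian fluctuation, localize the break to a Gumbel window around $t_\ast$, compute single-bond crossing asymptotics, decouple the bonds, and thin to get the break position) matches the paper's strategy. But there is a genuine gap at the core step: the single-bond crossing asymptotics. You propose to use the Rice formula with prefactor $(2\pi)^{-1}\sqrt{\lambda_2^i/\lambda_0^i}\exp(-u^2/2)$. The processes in question are (linear images of) an Ornstein--Uhlenbeck process driven directly by Brownian motion; they are nowhere differentiable, their covariance expands as $v^2(1-A_i|t|+o(t))$ with a kink at $t=0$ (this is exactly \eqref{eqn:lemma7directv1}), the second spectral moment $\lambda_2^i$ is infinite, and the number of up-crossings of a level is not a.s.\ finite. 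The Rice formula simply does not apply. The correct tool is the Pickands lemma in the nondifferentiable case $\alpha=1$ (Lemma~\ref{lem:pickands}), which replaces the Rice intensity by $\frac{A_i\mathcal{H}_1}{\sqrt{2\pi}}(u/v)\ex^{-(u/v)^2/2}$. Note the extra factor $u/v\sim\sqrt{2\ln(\sigma/\eps)}$ relative to the Rice prediction: it is precisely this factor that produces the constants $a_i=vdA_i/\sqrt{2\pi}$ and the $\sqrt{\ln(\sigma/\eps)}$ normalizations in the statement. A Rice-based computation would be off by a factor of order $\sqrt{\ln(\sigma/\eps)}$ and would not reproduce the theorem's scaling. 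Relatedly, your derivation of the boundary/interior dichotomy (``$\lambda_2^i$ is one quarter \dots whence via square root the halving'') rests on the same nonexistent quantity; the actual source of the factor $2$ is the diagonal of $GG^\top$ ($1$ at the ends, $2$ in the interior), which enters \emph{linearly} through $A_i^{1/\alpha}=A_i$ in the Pickands asymptotics.

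The remaining ingredients are broadly sound but under-specified. For the asymptotic independence you invoke a Slepian/Berman comparison; the paper instead uses the Kolmogorov--Rozanov bound $\mix(\theta)\le r(\theta)\le\ex^{-\amu\theta}$ for Gaussian processes together with an explicit blocking into long intervals $I_m$ separated by short buffers $J_m$, plus a separate estimate (via a Borell-type bound for the two-parameter field $\VVV^1_s+\VVV^2_t$) showing that simultaneous exits of two bonds are exponentially negligible. Your identification of the role of $\sigma^2|\ln\eps|\to 0$ (it keeps the Gumbel window short relative to $t_\ast$ and lets the chain relax) is correct, and the a~priori exclusion of early exits can indeed be done by a union bound over unit intervals with a crude Gaussian supremum estimate, as in the paper. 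But as written, the proposal cannot be completed without replacing the Rice/spectral-moment machinery by the $\alpha=1$ extreme-value theory.
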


The theorem says that the asymptotic distribution of the exit position is uniform on the points $\{2,\ldots,d-1\}$,
while the points $\{1,d\}$ carry half the weight of the others. The asymptotic time scaling of the exit time is
best expressed as the remaining time until $t_\ast$: Namely, asymptotically $t_\ast-\tau^i$ is given by
a deterministic term $\gamma \frac{\sigma}{\eps} \sqrt{\ln (\sigma/\eps)}$ and a stochastic term,
$\frac{\sigma}{\eps} (\sqrt{\ln (\sigma/\eps)})^{-1} \chi_i$, with a double exponential random variable
$\chi_i$ with parameters not depending on $\sigma$ and $\eps$ anymore.

We remark that the condition \eqref{eqn:stretch} ensures that
$t_\ast  \gg \gamma \frac{\sigma}{\eps} \sqrt{\ln(\sigma/\eps)}$.

The proof of this theorem is given in Section~\ref{sec:moderate}.

\paragraph{Very slow pulling regime.}  

Let us finally consider the case of very slow pulling characterized by the condition
\begin{equation} \label{eqn:condveryslowpulling}
\sigma^2 |\ln \eps| \to \infty.
\end{equation}

The main result in this regime is as follows.

\begin{thm} \label{thm:veryslowpulling}
Assume that \eqref{eqn:condveryslowpulling} holds. Then, as $\eps,\sigma\to 0$,
the asymptotic distribution of exit positions is described by \eqref{eqn:positions},
while we have the following weak limit theorems for the exit times:
$$
   \tau^i \cdot (\sigma v)^{-1} \exp( - (\sigma v)^{-2}/2 )  \cdot \frac{A_i}{\sqrt{2\pi}}
   \tod \mathcal{E}, \qquad i\in \{1,\ldots,d\},
$$
and
$$
    \tau \cdot (\sigma v)^{-1} \exp( - (\sigma v)^{-2}/2 )  \cdot \frac{2d}{\sqrt{2\pi}}
    \tod \mathcal{E},
$$
where $\mathcal{E}$ is a standard exponential random variable and $v$ and the $A_i$ are
defined in \eqref{eqn:parameterv0}.
\end{thm}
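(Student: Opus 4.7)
The plan is to exploit the fact that, in the regime $\sigma^2|\ln\eps|\to\infty$, the timescale on which the break occurs is short enough that the deterministic pulling $\eps t$ is a negligible perturbation, so the problem reduces to computing first-passage times for a stationary vector-valued Gaussian process over a high fixed level. This is exactly the setting in which classical Rice / Berman / Pickands-type results give exponential limits, and Lemma~\ref{lem:veryslowpullingstatlemma} (advertised in the introduction) is presumably the technical workhorse that does this for a single component.

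First I would pass from the particle positions to the bond variables $Y^i_t:=X^i_t-X^{i-1}_t-1$, so that $\tau^i=\inf\{t\ge 0: Y^i_t=1\}$. Using the explicit representations in Section~\ref{sec:prelim}, $(Y^1,\ldots,Y^d)$ splits into a deterministic piece of order $\eps t$ and a centered Gaussian Ornstein--Uhlenbeck-type piece with marginal stationary variance $v^2\sigma^2=(d-1)\sigma^2/(2d)$ and exponentially decaying time correlations; the initial transient also decays exponentially. The scaling factor $(\sigma v)^{-1}\exp(-1/(2(\sigma v)^2))$ is of order $1/\eps^{o(1)}$ under \eqref{eqn:condveryslowpulling}, hence multiplied by $\eps$ it is $o(1)$, so on the relevant time window the deterministic part is uniformly negligible against the threshold $1$, and the process is essentially stationary. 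I would therefore replace $(Y^i_t)$ by its stationary version up to a probability error that vanishes.

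Once I am in the stationary setting, I would combine two inputs. For each fixed $i$, invoke (or reprove, via the decoupling inequalities the authors emphasize) a Rice/Pickands-type upcrossing result: split the time axis into blocks of length intermediate between the correlation time and the inverse upcrossing rate, show that the probability of an upcrossing in one block is asymptotically $(\sigma v)^{-1}e^{-1/(2(\sigma v)^2)}A_i/\sqrt{2\pi}$ times the block length by a direct Laplace-type computation at the Gaussian density, and show that upcrossings in non-adjacent blocks become asymptotically independent Bernoulli events via the Berman inequality for Gaussian processes. This yields $\tau^i\cdot r_i\tod\mathcal{E}$ with $r_i$ as in the statement. The constants $A_i$ then come out of the Rice computation: the boundary bonds see one inner neighbour and the interior bonds see two, which produces the factor $2$ difference between $A_1=A_d=d/(d-1)$ and $A_i=2d/(d-1)$. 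For the joint statement I would use the same decoupling, now in space as well as in time, to deduce that the pairs $(\tau^i,\tau^j)$ are asymptotically independent; then $\tau=\min_i\tau^i$ converges to an exponential with rate $\sum_i r_i$, and the identity $\sum_{i=1}^d A_i=2d$ matches the claimed scaling. The position probabilities $\P(\tau=\tau^i)\to A_i/(\sum_j A_j)$ immediately give \eqref{eqn:positions}.

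The hard part will be making the three approximations--stationary replacement of the nonstationary drifted process, Rice-type sharp asymptotics at the fixed level $1$, and decoupling of the $d$ coupled bonds--work \emph{simultaneously} with sharp enough error control. In particular, the nonstationary deterministic term $\eps t$ raises the effective level by an amount that grows over the relevant time window, so even though it is pointwise $o(1)$ one must verify that it does not contaminate the precise constant in the exponential $e^{-1/(2(\sigma v)^2)}$; the condition $\sigma^2|\ln\eps|\to\infty$ is exactly what is needed here, and checking it carefully at the Gaussian density is the main technical obstacle. The second delicate point is the cross-bond decoupling: because the $Y^i$ share Brownian drivers, their joint covariance does not decay to zero, and one must show that the \emph{upcrossing events} at level $1$ nevertheless become asymptotically independent, presumably via a conditional Gaussian computation showing that the remaining correlation is irrelevant at the exponential scale.
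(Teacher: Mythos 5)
Your plan follows essentially the same route as the paper: decompose into a negligible drift plus a stationary Gaussian part (using $\sigma^2|\ln\eps|\to\infty$ to absorb the pulling and the initial transient into an $o(1)$ perturbation of the level, i.e.\ a modified $\sigma_*$ with $\sigma_*^{-2}=\sigma^{-2}+o(1)$), then block the time axis, apply the Pickands lemma per block, decouple blocks via a Gaussian mixing inequality, and show that simultaneous exits of two different bonds are of strictly smaller exponential order so that the union of single-bond exit events is asymptotically the sum — which is exactly how the paper handles $\tau=\min_i\tau^i$ and the position probabilities $A_i/\sum_j A_j$. All the delicate points you flag (contamination of the constant in $e^{-(\sigma v)^{-2}/2}$ by the drift, and the cross-bond correlation) are the ones the paper addresses, by the same means.
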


We remark that the assertions of the last theorem also hold for $\eps=0$ and $\sigma\to 0$.

The asymptotic distribution of the exit position is the same as in the intermediate regime. However, the time scaling
of the exits is quite different and in fact much smaller. Indeed, $\tau^i$ is of order $\exp((\sigma v)^{-2}/2)$
(up to lower order terms), which is of smaller order than $t_\ast$ due to \eqref{eqn:condveryslowpulling}.

The proof of this theorem is given in Section~\ref{sec:nopulling}.

\paragraph{Ideas of the proofs and possible generalizations.}
Let us give a quick sketch of the idea of the proofs. Consider for simplicity the situation
in Theorem~\ref{thm:veryslowpulling}. Here, the exit happens on a time scale way before the pulling
significantly influences the dynamics. Therefore, we essentially deal with an exit of a process
with variance tending to zero out of a fixed deterministic set. One can imagine the mechanism as
follows: The process makes an attempt to exit the set, that is, each of the correlated components
tries to become large. Since the variance of the process is small ($\sigma\to 0$), the exit is a large
deviation event; and therefore the process typicially fails to exit. These attempts are repeated until
finally the process does exit. We show that the attempts are almost independent; for this purpose, one
manages to obtain an exponential decoupling (in time). Furthermore, one needs to show that the joint exit
of different components of the process is much more unlikely than the exit of a single component.
Finally, one needs to control the exit probability of one component, which requires suitably precise
large deviation results for Gaussian processes.

The proof of Lemma~\ref{lem:veryslowpullingstatlemma} below realizes the above-mentioned proof strategy,
for a stationary version of our system (and $\eps=0$), in the most representative way. This result
may be of independent interest due to the generality of its assumptions.

Let us further remark that, in the moderately slow and very slow pulling regimes, the initial condition
$X_0^i=i$, $i=0,\ldots, d$, has no influence on the results. One may very well start with different
initial conditions, as long as they do not depend on $\sigma$ and $\eps$ too much.
In fact, one step in the proofs is to show that the resulting  distributions of exit positions
and exit times are the same as if the process were started {\it stationary}.

There are various ways in which our results can be generalized. It is natural to look at more general interaction
potentials rather than the quadratic potential that gives the driving equation in (\ref{eqn:original}).
Even if the potential is quadratic, one can extend the interaction beyond the nearest neighbours. In this case,
the change of the potential translates into a change of the driving matrix (see (\ref{eqn:defnofA})); and our techniques are quite robust
with respect to the properties of that matrix (we only use that its eigenvalues are negative).

Another possibility is to look at the model (\ref{eqn:original}) driven by some other Gaussian processes
rather than Brownian motion. For example, the result might transfer to fractional Brownian motion.
The only place where the concrete form of the process (and in fact the Markov property) is used is the proof
of Lemma~\ref{lem:nonstr.4.2concrete}, where the exponential decoupling is proved.
We stress that apart from this place, we do not use the Markov property of the solution to (\ref{eqn:original}).

\section{Preliminaries}
\label{sec:prelim}
\subsection{Decomposition into deterministic and homogeneous part}

The first step of our studies is to show that the solution to \eqref{eqn:original}
can be rewritten in terms of a main deterministic part, an asymptotically negligible
deterministic part (${\bf \Delta}$), and a linearly transformed multi-dimensional
Ornstein--Uhlenbeck process.

\begin{lem} \label{lem:lem1} 
The following is the solution to $\eqref{eqn:original}$:
\begin{equation} \label{eqn:decompositionofx}
    X_t^i := \frac{i}{d} ( \eps t + d) + \eps \Delta^i_t + Y_t^i, \qquad i=0,\ldots,d,
\end{equation}
where $Y^0\equiv Y^d\equiv 0$, $\Delta^0\equiv\Delta^d\equiv 0$;
for $i=1,\ldots,d-1$, $\Delta^i$ are deterministic functions such that
 ${\bf \Delta}_t=(\Delta^1_t,\ldots,\Delta^{d-1}_t)^\top$ is the unique solution
of the system of ordinary differential equations:
\begin{equation} \label{eqn:determsystem}
   {\bf \Delta}' + {\bf \nu} = \MA {\bf \Delta},\qquad {\bf \Delta}(0)=0,
\end{equation}
where $\nu^i:=i/d$, $i=1,\ldots,d-1$, and ${\bf \nu}:=(\nu^1,\ldots,\nu^{d-1})$;
finally $(Y_t^i)$, $i=1,\ldots,d-1$, are the solutions
to the following system of stochastic differential equations
\begin{equation} \label{eqn:homogeneous}
    \begin{cases}
       Y_0^i = 0 & i=1,\ldots, d-1;\\
       \dd {\bf Y}_t = \MA {\bf Y}_t\dd t+\sigma \dd {\bf B}_t,& t\geq 0,
    \end{cases}
\end{equation}
where ${\bf Y}_t=(Y_t^1,\ldots,Y_t^{d-1})^\top$ and the $(d-1)\times(d-1)$-matrix $\MA$
is given by
\begin{equation} \label{eqn:defnofA}
   \MA:= \begin{pmatrix}
   -2 & 1 & 0 & 0& \ldots& 0 \\
    1 & -2 & 1 & 0 &\ldots& 0 \\
    0 & 1& -2 & 1 & &  \\
      &   & \ddots & \ddots & \ddots  \\
    0 &\ldots & & 1 & -2 & 1 \\
    0 &\ldots & & 0 & 1 & -2
\end{pmatrix}.
\end{equation}
\end{lem}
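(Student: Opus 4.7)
The proof proceeds by direct verification of the ansatz \eqref{eqn:decompositionofx}. The plan is to write $X_t^i = L_t^i + \eps \Delta_t^i + Y_t^i$ with $L_t^i := \frac{i}{d}(\eps t + d)$, check the boundary and initial conditions, and then substitute into \eqref{eqn:original} to read off the equations that ${\bf \Delta}$ and ${\bf Y}$ must satisfy. Since $L_t^i$ is the affine interpolation in $i$ between the prescribed boundary values $L_t^0=0$ and $L_t^d=d+\eps t$, matching the first three lines of \eqref{eqn:original} reduces to requiring $\Delta_t^0=\Delta_t^d=0$ and $Y_t^0=Y_t^d=0$, both of which are built into the statement; the initial condition $X_0^i=i$ then follows from $L_0^i=i$ together with $\Delta_0^i=Y_0^i=0$.

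The crucial observation is that $L_t^i$ is affine in $i$, so the discrete second difference annihilates it: $L^{i+1}_t-2L^i_t+L^{i-1}_t=0$ for $1\le i\le d-1$. Applying the drift operator $X^{i+1}+X^{i-1}-2X^i$ to the ansatz therefore collapses to $\eps(\MA{\bf \Delta})^i_t+(\MA{\bf Y})^i_t$, where the matrix $\MA$ of \eqref{eqn:defnofA} is precisely the three-point stencil $\Delta^{i-1}-2\Delta^i+\Delta^{i+1}$ acting on the interior vector, with the Dirichlet boundary values $\Delta^0=\Delta^d=0$ absorbed into the missing entries of its first and last rows (and analogously for $Y$). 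The It\^o differential of the ansatz reads $\dd X^i_t=\frac{i\eps}{d}\dd t+\eps (\Delta^i)'(t)\dd t+\dd Y^i_t$. Equating the two expressions and separating the stochastic part from the deterministic part, one obtains $\dd{\bf Y}_t=\MA{\bf Y}_t\dd t+\sigma\dd{\bf B}_t$, which is \eqref{eqn:homogeneous}, together with $(\Delta^i)'(t)+\nu^i=(\MA{\bf \Delta})^i_t$ for $\nu^i=i/d$, which is exactly \eqref{eqn:determsystem}.

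It remains only to address existence and uniqueness: $\MA$ is a symmetric, strictly negative definite discrete Dirichlet Laplacian, so the linear inhomogeneous ODE \eqref{eqn:determsystem} has a unique global solution, and the linear SDE \eqref{eqn:homogeneous} is a standard multivariate Ornstein--Uhlenbeck equation with a unique strong solution. Since the drift in \eqref{eqn:original} is globally Lipschitz and the noise additive, \eqref{eqn:original} has at most one strong solution, so the process built from $L$, ${\bf \Delta}$ and ${\bf Y}$ is the solution. The proof is essentially a computation and presents no serious obstacle; the only step that requires any care is the bookkeeping that matches the three-point stencil restricted to the interior $(d-1)$-dimensional subspace with the matrix $\MA$, i.e.\ checking that the Dirichlet conditions at $i=0$ and $i=d$ are correctly encoded in the first and last rows of $\MA$.
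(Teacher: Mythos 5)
Your proof is correct and follows essentially the same route as the paper's: a direct verification that the ansatz satisfies the boundary and initial conditions and the driving SDE, using that the affine-in-$i$ leading term $\frac{i}{d}(\eps t+d)$ is annihilated by the discrete second difference so that the drift collapses to $\eps(\MA{\bf \Delta})^i+(\MA{\bf Y})^i$, which is then matched with \eqref{eqn:determsystem} and \eqref{eqn:homogeneous}. The only addition is your explicit existence/uniqueness discussion, which the paper leaves implicit.
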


We note here that $\MA$ is symmetric and negative definite as the eigenvalues of $\MA$ are given by $\lambda_i := -2\left( 1 - \cos(i\pi/d)\right)$, $i=1,\ldots,d-1$,
see a general formula for tridiagonal Toeplitz matrices e.g.\ in \cite[formula (4)]{NPR}.
We will only use that $\lambda_i$ are all negative and that $(-\MA)^{-1/2}$ exits, but do not need the explicit form of the eigenvalues.

%
%
%
%
\begin{proof}[ of Lemma~\ref{lem:lem1}]
Let ${\bf Y}$ be a solution to \eqref{eqn:homogeneous} and let
${\bf \Delta}_t=(\Delta^1_t,\ldots,\Delta^{d-1}_t)^\top$ be the unique solution
to the system  \eqref{eqn:determsystem}.

Then define $X$ by equation \eqref{eqn:decompositionofx}. Clearly, $X$ satisfies
the correct initial conditions.
Furthermore, we obtain the stochastic differential equation for $X$:
\begin{eqnarray*}
  &&   [X^{i+1}_t +X^{i-1}_t-2X^{i}_t]\dd t + \sigma \dd B_t^i
\\
  &=&   \left[(\frac{i+1}{d}+\frac{i-1}{d}-2 \frac{i}{d}) (\eps t+d)
       + \eps (\Delta^{i+1}_t+\Delta^{i-1}_t-2\Delta^{i}_t)\right]\dd t
\\
  &&    +[Y^{i+1}_t +Y^{i-1}_t-2Y^{i}_t]\dd t + \sigma \dd B_t^i
\\
  &=&  \eps (\MA{\bf \Delta}_t)^i\dd t + (\MA {\bf Y}_t)^i\dd t + \sigma \dd B_t^i
\\
  &=&  \eps(\nu^i+  (\Delta^i)'_t)\dd t+ \dd Y^i_t
\\
  &=& \dd X^i_t,\qquad i=1,\ldots,d-1,
\end{eqnarray*}
as required, where we used \eqref{eqn:determsystem} and \eqref{eqn:homogeneous}
in the last but one step.
%
%
\end{proof}

The deterministic part ${\bf \Delta}$ from Lemma~\ref{lem:lem1} admits a further simplification.

\begin{lem} \label{lem:Delta_h_Z}
Let  the vector function ${\bf \Delta}$ be defined in $\eqref{eqn:determsystem}$.
Then it is true that
$$ 
  \Delta^i_t=h_i+Z^i_t, \qquad i\in\{1,\ldots,d-1\},
$$ 
where
\[
    h_i:= \frac{i(i^2-d^2)}{6d}, \qquad i\in\{0,\ldots,d\},
\]
and the functions $Z^i$ satisfy the conditions
\be \label{eqn:Zproperties}
   C^i_Z:=\sup_{t\ge 0} |Z^i_t|<\infty,\qquad \lim_{t\to \infty} Z^i_t=0,\qquad i\in\{1,\ldots,d-1\}.
\ee
\end{lem}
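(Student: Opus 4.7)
The plan is to split $\mathbf{\Delta}$ into its equilibrium (the unique constant vector annihilating the right-hand side of \eqref{eqn:determsystem}) plus a transient that is governed by the linear homogeneous ODE with matrix $\mathcal{A}$.

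Concretely, first I would search for a stationary solution $\mathbf{h}=(h_1,\ldots,h_{d-1})^\top$ of $\mathcal{A}\mathbf{h} = \boldsymbol{\nu}$. Written componentwise this is the discrete Poisson problem
\[
   h_{i+1}-2h_i+h_{i-1} = \frac{i}{d}, \qquad i=1,\ldots,d-1,
\]
with Dirichlet boundary values $h_0=h_d=0$. Since $\mathcal{A}$ is invertible (all eigenvalues are nonzero by the remark after Lemma~\ref{lem:lem1}), such an $\mathbf{h}$ is unique. I would then simply verify that the proposed formula $h_i = i(i^2-d^2)/(6d)$ satisfies the boundary conditions (trivially, $h_0=h_d=0$) and the recursion, using $(i+1)^3+(i-1)^3 = 2i^3+6i$ to reduce $h_{i+1}-2h_i+h_{i-1}$ to $i/d$ after a short algebraic manipulation. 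This is purely computational and is the main routine step.

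Once $\mathbf{h}$ is identified, I would introduce $\mathbf{Z}_t := \mathbf{\Delta}_t - \mathbf{h}$. Substituting into \eqref{eqn:determsystem} and using $\mathcal{A}\mathbf{h}=\boldsymbol{\nu}$, the inhomogeneity cancels and we obtain the homogeneous linear ODE
\[
   \mathbf{Z}'_t = \mathcal{A}\mathbf{Z}_t, \qquad \mathbf{Z}_0 = -\mathbf{h},
\]
whose explicit solution is $\mathbf{Z}_t = -e^{t\mathcal{A}}\mathbf{h}$. Since $\mathcal{A}$ is symmetric and negative definite with spectral bound $\lambda_{\max}(\mathcal{A}) = \max_i \lambda_i < 0$, the operator norm of $e^{t\mathcal{A}}$ is bounded by $e^{t\lambda_{\max}(\mathcal{A})}$, which is bounded by $1$ for all $t\ge 0$ and decays exponentially as $t\to\infty$. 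This gives at once both
\[
   \sup_{t\ge 0}\|\mathbf{Z}_t\| \le \|\mathbf{h}\| < \infty
   \quad\text{and}\quad
   \lim_{t\to\infty}\|\mathbf{Z}_t\| = 0,
\]
which after extracting coordinates yields the two properties in \eqref{eqn:Zproperties} with $C^i_Z \le \|\mathbf{h}\|$.

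There is essentially no obstacle: the only non-mechanical point is guessing (or deriving) the closed form for $h_i$. One can of course arrive at it without guessing by solving the recursion twice (inverting the discrete Laplacian), but writing down the cubic formula and checking it directly is the cleanest presentation. The uniqueness part of \eqref{eqn:determsystem}, which one might also want to justify, is immediate since the map $\mathbf{\Delta}\mapsto \mathcal{A}\mathbf{\Delta}-\boldsymbol{\nu}$ is globally Lipschitz in $\mathbf{\Delta}$.
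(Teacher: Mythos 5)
Your proposal is correct and follows essentially the same route as the paper: verify that $h_i=i(i^2-d^2)/(6d)$ solves $\MA{\bf h}={\bf \nu}$ with $h_0=h_d=0$, set ${\bf Z}_t={\bf \Delta}_t-{\bf h}$ to obtain the homogeneous system ${\bf Z}'=\MA{\bf Z}$ with ${\bf Z}_0=-{\bf h}$, and conclude boundedness and decay from the negativity of the eigenvalues of $\MA$. The algebraic check via $(i+1)^3+(i-1)^3=2i^3+6i$ is exactly the computation the paper leaves to the reader.
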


\begin{proof}
Consider the vector ${\bf h}:=(h_1,\dots h_{d-1})^\top$. It is easy to check that (using $h_0=h_d=0$)
$$
  \frac{i}{d} = h_{i+1}+h_{i-1}-2h_i, \qquad i\in\{1,\ldots,d-1\}.
$$
The latter relation means that $\MA{\bf h}={\bf \nu}$, where the matrix $\MA$
and the vector ${\bf \nu}$ were defined in Lemma~\ref{lem:lem1}.

Let  ${\bf Z}_t :={\bf \Delta}_t-{\bf h}$. Then the system \eqref{eqn:determsystem}
transforms into the {\it homogeneous} system
\[
   {\bf Z}' = \MA\, {\bf Z}, \qquad {\bf Z}(0)=-{\bf h}.
\]
Due to the fact that $\MA$ has only negative eigenvalues, it is clear that all components $Z^i$ of the solution
${\bf Z_t}=-e^{t\MA} {\bf h}$ are exponentially decreasing when the time goes to infinity,
thus they satisfy \eqref{eqn:Zproperties}.
\end{proof}

\subsubsection*{An example}
Consider the simplest non-trivial example. Let $d=2$; then we have two border particles moving
deterministically, $X^0_t\equiv 0$ and $X^2_t=2+\eps t$, and one middle particle performing
a nontrivial stochastic movement $X^1_t$. The negligible deterministic part satisfies the simple
ordinary differential equation
\[
   [\Delta^1]' + \frac12 = -2 \Delta^1,\qquad \Delta^1(0)=0,
\]
so that
\[
    \Delta^1_t = \frac{1}{4}\left(e^{-2t}-1\right).
\]
In the stochastic part, the matrix $\MA$ simplifies to the number $-2$, and we obtain
an Ornstein--Uhlenbeck process $Y^1_t=Y^{1,\sigma}_t$, i.e.\ satisfying equation
\[
   \dd Y^1_t = -2 Y^1_t \dd t + \sigma \dd  B_t,\qquad Y^1_0 = 0,
\]
where $B$ is a Brownian motion. By collecting all terms, we obtain
\[
   X^1_t = 1+ \frac{\eps t}{2} -\frac{\eps}{4}\left(1-e^{-2t}\right) + Y^{1,\sigma}_t.
\]

\subsection{Representation as a multi-dimensional Ornstein--Uhlenbeck processes}

Recall that a real-valued Gaussian process $U$ is called an \textit{Ornstein--Uhlenbeck process
started at zero} with parameter $\lambda<0$ if it solves the equation
$$
    \dd U_t = \lambda U_t \dd t + \dd  B_t,\qquad U_0 = 0,
$$
where $B$ is a Brownian motion.

A \textit{stationary Ornstein--Uhlenbeck process} is given by the solution to the stochastic differential equation
\begin{equation} \label{eqn:dglu0stat}
    \dd \tilde U_t = \lambda \tilde U_t \dd t + \dd  B_t,\qquad \tilde U_0 = (-2\lambda)^{-1/2} \xi,
\end{equation}
where $\xi$ is a standard normal random variable independent of the Brownian motion $(B_t)$. One can check that $(U_t)$ is indeed stationary.

Our next step consists in expressing the stochastic part of \eqref{eqn:decompositionofx},
i.e.\ ${\bf Y}$ being the solution to \eqref{eqn:homogeneous}, as a mixture of
independent real-valued Ornstein--Uhlenbeck processes {\it started at zero}. In the subsequent
evaluations, it will be convenient to replace these processes by their {\it stationary versions}.

\begin{lem} \label{lem:exprviaOU}
The following centered Gaussian process is a representation of the solution to $\eqref{eqn:homogeneous}$:
$$
    {\bf Y}_t := \sigma e^{\MA t} \int_0^t e^{-\MA u} \dd {\bf B}_u.
$$

Consider furthermore the stationary centered Gaussian process
$$
    {\bf \tilde Y}_t :=  \sigma e^{\MA t}( (-2\MA)^{-1/2} {\bf \xi} +  \int_0^t e^{-\MA u} \dd {\bf B}_u),
$$
with i.i.d.\ standard normal ${\bf \xi}:=(\xi_j)_{j=1,\ldots,d-1}$ independent of $({\bf B}_u)$. Then
\be \label{eqn:closenessU}
     ||{\bf Y}_t-{\bf \tilde Y}_t||_\infty \leq \sigma\,\sqrt{\frac{d-1}{2\mu}}\, e^{-\amu t} \max_{1\leq j\leq d-1} |\xi_j|,
\ee
where $\amu:=\min_{1\leq j \leq d-1} |\lambda_j|$, $\lambda_j$ are the eigenvalues of $\MA$.
\end{lem}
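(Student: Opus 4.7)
The plan is to handle the three assertions in sequence. The first two are essentially the standard variation-of-constants calculation for a multidimensional Ornstein--Uhlenbeck process, and the heart of the lemma is the sup-norm bound \eqref{eqn:closenessU}.

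\medskip

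First I would verify that the stated ${\bf Y}_t$ solves \eqref{eqn:homogeneous}. Since $e^{\MA t}$ is the fundamental matrix of the linear flow $\dot{\bf y}=\MA {\bf y}$, Itô's formula applied to $t\mapsto e^{\MA t}\bigl(\sigma\int_0^t e^{-\MA u}\,\dd {\bf B}_u\bigr)$ yields $\dd {\bf Y}_t = \MA {\bf Y}_t\,\dd t+\sigma\,\dd {\bf B}_t$, and ${\bf Y}_0=0$ is built in. Uniqueness for the linear SDE is classical. For $\tilde{\bf Y}$, the same computation (now with a nontrivial initial datum $\sigma(-2\MA)^{-1/2}{\bf \xi}$) shows it satisfies the same SDE. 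Stationarity reduces to verifying that its covariance at time $t$ is $\sigma^2 (-2\MA)^{-1}$ independently of $t$: by orthogonality of $\xi$ and ${\bf B}$ and Itô's isometry,
\[
\cov(\tilde{\bf Y}_t)=\sigma^2 e^{\MA t}(-2\MA)^{-1}e^{\MA t}+\sigma^2 e^{\MA t}\!\!\int_0^t \!\!e^{-2\MA u}\,\dd u\, e^{\MA t},
\]
and using the symmetry of $\MA$ together with $\int_0^t e^{-2\MA u}\,\dd u=(-2\MA)^{-1}(I-e^{-2\MA t})$ gives exactly $\sigma^2(-2\MA)^{-1}$, as required. (Mean-zero Gaussian with constant covariance and Markovian dynamics of OU type implies strict stationarity.)

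\medskip

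The main point is the bound \eqref{eqn:closenessU}. By construction,
\[
{\bf Y}_t-\tilde{\bf Y}_t = -\,\sigma\, e^{\MA t}(-2\MA)^{-1/2}{\bf \xi},
\]
so the difference is a linear image of a standard Gaussian vector by the symmetric matrix $M_t:=\sigma e^{\MA t}(-2\MA)^{-1/2}$. Diagonalize $\MA=Q\,\mathrm{diag}(\lambda_j)\,Q^\top$ with $Q$ orthogonal; since $M_t=Q\,\mathrm{diag}\bigl(\sigma\, e^{\lambda_j t}/\sqrt{-2\lambda_j}\bigr)Q^\top$, the operator $\ell^2$-norm of $M_t$ equals $\max_j \sigma e^{\lambda_j t}/\sqrt{-2\lambda_j}$. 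A one-variable calculus check shows $x\mapsto e^{-xt}/\sqrt{2x}$ is decreasing on $x>0$ for every $t\ge 0$, so this maximum is bounded by $\sigma\, e^{-\amu t}/\sqrt{2\amu}$. Combining the elementary estimates $\|v\|_\infty\le \|v\|_2$ and $\|{\bf \xi}\|_2\le \sqrt{d-1}\,\max_j|\xi_j|$ gives
\[
\|{\bf Y}_t-\tilde{\bf Y}_t\|_\infty\le \|M_t\|_{\mathrm{op}}\,\|{\bf \xi}\|_2\le \sigma\sqrt{\tfrac{d-1}{2\amu}}\,e^{-\amu t}\max_j|\xi_j|,
\]
which is exactly \eqref{eqn:closenessU}.

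\medskip

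The only step where one can slip is the monotonicity observation for $e^{-xt}/\sqrt{2x}$, which justifies replacing the maximum over $\{|\lambda_j|\}$ by its value at the smallest spectral gap $\amu$; everything else is bookkeeping around variation of constants and Itô's isometry. I do not anticipate any genuine obstacle here.
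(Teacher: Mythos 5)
Your proof is correct and follows essentially the same route as the paper's: It\^o's formula for the SDE, It\^o isometry for stationarity, and diagonalization of $\MA$ for the sup-norm bound (the paper bounds $\max_j e^{\lambda_j t}$ and $\max_j(-2\lambda_j)^{-1/2}$ separately, which coincides with your monotonicity argument since both maxima are attained at the eigenvalue of modulus $\amu$). The only blemish is a sign slip in the intermediate identity, which should read $\int_0^t e^{-2\MA u}\,\dd u=(-2\MA)^{-1}\bigl(e^{-2\MA t}-I\bigr)$; with the correct sign the covariance indeed telescopes to $\sigma^2(-2\MA)^{-1}$ as you claim.
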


\begin{proof} One easily checks with the help of the It\^o formula the ${\bf Y}_t$ satisfies the SDE \eqref{eqn:homogeneous}. The explicit representation and the It\^o isometry also allows to show that ${\bf \tilde Y}_t$ is stationary. The difference ${\bf Y}_t-{\bf \tilde Y}_t$ can be estimated as follows: let $\MA=Q^\top D Q$ be a diagonalization of $\MA$. Then $(-2\MA)^{-1/2}=Q^\top (-2D)^{-1/2} Q$ and
\begin{eqnarray*}
||e^{\MA t}( -2\MA)^{-1/2} {\bf \xi}||_\infty  &=& || Q^\top e^{ D t } Q^T Q (-2D)^{-1/2} Q {\bf \xi}||_\infty
\\
 &\leq &  || Q^\top  e^{ D t } (-2D)^{-1/2} Q {\bf \xi}||_2
\\
& =&||  e^{ D t } (-2D)^{-1/2} Q {\bf \xi}||_2
\\
&\leq& \max_{1\leq j\leq d-1} e^{\lambda_j t} \cdot \max_{1\leq j\leq d-1} (-2\lambda_j)^{-1/2} \cdot || Q {\bf \xi} ||_2
\\
&= & e^{-\min_{1\leq j\leq d-1} |\lambda_j| t} \cdot (2\min_{1\leq j\leq d-1}|\lambda_j|)^{-1/2} \cdot || {\bf \xi} ||_2
\\
&\leq & e^{-\amu t} \cdot (2\amu)^{-1/2} \cdot \sqrt{d-1} \max_{1\leq j\leq d-1} |\xi_j|.
\end{eqnarray*}
\end{proof}

\subsection{Exit conditions for the solution and their stationary analogues}
The purpose of this subsection is to outline the sufficient and the necessary
exit conditions for the model in terms of an appropriately chosen {\it stationary}
process. These stationary versions of exit are easier to verify because for
stationary processes we have extremely robust bounds for large deviation probabilities
(see the Pickands lemma, Lemma~\ref{lem:pickands} below).

Define the $(d-1)\times d$ matrix $G$ by
$$
G_{i,j}:=\begin{cases} 1 & i=j,\\ -1 & i=j+1,\\ 0 & \text{otherwise}. \end{cases}
$$

Recall that
$$
    \tau^i = \inf\{ t\geq 0 ~|~ X^i_t - X^{i-1}_t = 2\}.
$$
Using the representations in Lemmas~\ref{lem:lem1} and~\ref{lem:exprviaOU}, we see that
\begin{eqnarray} \notag
    X^i_t - X^{i-1}_t &=& \frac{1}{d}(\eps t +d) + \eps (\Delta^i_t - \Delta^{i-1}_t)
        + Y^i_t - Y^{i-1}_t
\\  \notag
    &=& \frac{1}{d}(\eps t +d) + \eps (\Delta^i_t - \Delta^{i-1}_t)
        + (G {\bf Y}_t)^i,
\\  \label{eqn:differenceX}
    &=& \frac{1}{d}(\eps t +d) + \eps \delta_i + \eps (Z^i_t - Z^{i-1}_t) + \sigma V^i_t,
\end{eqnarray}
where ${\bf \Delta}$ and $Z$ are as in Lemma~\ref{lem:Delta_h_Z},
$\delta_i:=h_i-h_{i-1}=\tfrac{1-d^2}{6d}+ \tfrac{i^2-i}{2d}$ for $i\in\{1,\ldots,d\}$; while
$$ 
    {\bf V}_t:=(V^1_t,\ldots,V^{d-1}_t) := G e^{\MA t} \int_0^t e^{-\MA u} \dd {\bf B}_u =  \sigma^{-1} G {\bf Y}_t.
$$ 
Therefore, we can rewrite the exit time as follows
\be \label{eqn:tauiV}
     \tau^i = \inf\{ t\geq 0 \ | \  \sigma V_t^i
             =  1-\frac{\eps t}{d}  -  \eps (\delta_i+ Z^i_t - Z^{i-1}_t)  \}.
\ee
We will prefer to replace $V_t^i$ in this formula by its stationary version. Namely, let
\begin{equation} \label{eqn:Vti}
    {\bf \tV}_t:=(\tV^1_t,\ldots,\tV^{d-1}_t):=  G e^{\MA t}( (-2\MA)^{-1/2} {\bf \xi} +  \int_0^t e^{-\MA u} \dd {\bf B}_u) =  \sigma^{-1} G {\bf \tilde Y}_t,
\end{equation}
with the process $({\bf \tilde Y}_t)$ from Lemma~\ref{lem:exprviaOU} (in particular satisfying \eqref{eqn:closenessU}).
In Lemma~\ref{lem:7directv1} below, we shall analyze the local covariance structure of this
stationary Gaussian process.

It follows from \eqref{eqn:closenessU} that
\[
   | V_t^i - \tV_t^i | \leq \Xi\, e^{-\amu t},\qquad i=1,\ldots,d-1,
\]
for all $t\ge 0$, for $\amu:= \min_{1\le j\le d}|\lambda_j|$ and the finite positive random variable $\Xi:=2\sqrt{(d-1)/(2\amu)} \max_{j\in\{1,\ldots,d-1\}} |\xi_j|$.

Let us now consider the {\it exit event}
\be \label{eqn:exitevent}
    \left\{ \sigma V_t^i  \ge  1-\frac{\eps t}{d} -  \eps (\delta_i+ Z^i_t - Z^{i-1}_t) \right\}.
\ee
If \eqref{eqn:exitevent} occurs then we clearly have
\[
    \sigma \tV_t^i  \ge  1-\frac{\eps t}{d} -  \eps (\delta_i+ Z^i_t - Z^{i-1}_t) - \sigma \Xi\, e^{-\amu t}.
\]
Moreover, using \eqref{eqn:Zproperties} and letting $K^i:= |\delta_i|+ C_Z^i +C_Z^{i-1}$, 
we have
\be  \label{eqn:exitnecessaryshort}
   \sigma \tV_t^i  \ge  1-\frac{\eps t}{d}  - \eps K^i - \sigma \Xi\, e^{-\amu t}.
\ee
These two conditions in terms of $\tV^i$ are {\it necessary} for the exit. Similarly, the conditions
\[
    \sigma \tV_t^i  \ge  1-\frac{\eps t}{d} -  \eps (\delta_i+ Z^i_t - Z^{i-1}_t) + \sigma  \Xi\, e^{-\amu t}
\]
and
\be  \label{eqn:exitsufficientshort}
   \sigma \tV_t^i  \ge  1-\frac{\eps t}{d} + \eps K^i  +  \sigma \Xi\, e^{-\amu t},
\ee
are {\it sufficient} for the exit, i.e. if any of them is verified, then
\eqref{eqn:exitevent} occurs.

\subsection{Local covariance asymptotics}
The purpose of this subsection is to analyze the covariance expansion of the process $( {\bf \tV}_t)$.

%


\begin{lem} Consider the processes ${\bf \tV}_t=(\tV_t^1,\ldots,\tV_t^d)$ defined in $\eqref{eqn:Vti}$. Then, for all $i,j\in\{1,\ldots,d\}$, $\E [\tV_s^i \tV_t^j ]= \E [\tV_0^i \tV_{|t-s|}^j ]$
and as $t\to 0$, $i\in\{1,\ldots,d\}$,
\begin{equation} \label{eqn:lemma7directv1}
       \E[ \tV^i_0  \tV^i_t ] = v^2 ( 1 - A_i |t| + o(t)),
\end{equation}
where $v^2$ and the $A_i$ are defined in $\eqref{eqn:parameterv0}$. 
Further,
\begin{equation} \label{eqn:transfertov3lemma7}
\E[ \tV^i_0  \tV^j_0 ] = -\frac{1}{2d},\qquad i\neq j.
\end{equation}
\label{lem:7directv1}
\end{lem}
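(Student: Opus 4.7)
Since $\tilde{\mathbf{V}}_t = \sigma^{-1} G \tilde{\mathbf{Y}}_t$ is an affine image of the stationary Gaussian process from Lemma~\ref{lem:exprviaOU}, the identity $\E[\tilde V^i_s \tilde V^j_t] = \E[\tilde V^i_0 \tilde V^j_{|t-s|}]$ is inherited automatically. The first substantive step is to obtain the explicit matrix covariance: combining the stationary representation of $\tilde{\mathbf{Y}}$ with the It\^o isometry gives, for $t\ge 0$,
\[
\E\bigl[\tilde{\mathbf{V}}_0 \tilde{\mathbf{V}}_t^\top\bigr] \;=\; G(-2\MA)^{-1} e^{\MA t} G^\top \;=\; G\, e^{\MA t}(-2\MA)^{-1} G^\top,
\]
the commutation being valid because $\MA$ is symmetric. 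Formulas \eqref{eqn:lemma7directv1} and \eqref{eqn:transfertov3lemma7} then amount to reading off this matrix at $t=0$ and to first order in $t$.

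\textbf{Stationary covariances.} To identify $(G(-2\MA)^{-1}G^\top)_{i,j}$ without computing $\MA^{-1}$ explicitly, I would use the following reinterpretation. A centered Gaussian vector $\mathbf{W}=(W_1,\ldots,W_{d-1})$ with precision matrix $-\MA$ has density proportional to $\exp(-\tfrac12 \sum_{i=0}^{d-1}(W_{i+1}-W_i)^2)$ under the convention $W_0=W_d=0$; that is, $\mathbf{W}$ has the law of a discrete Gaussian bridge of length $d$. Its increments $\Delta_i := W_i - W_{i-1}$, $i=1,\ldots,d$, then satisfy $\sum_i \Delta_i = 0$ and have joint density proportional to $\exp(-\tfrac12 \sum_i \Delta_i^2)$ on that hyperplane, so $(\Delta_1,\ldots,\Delta_d)$ is standard Gaussian on $\{x\in\R^d : \sum_i x_i=0\}$ with covariance $I - \tfrac{1}{d}\mathbf 1 \mathbf 1^\top$. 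Since $\sigma^{-1}\tilde{\mathbf Y}_0$ has precision matrix $-2\MA$, one has $\tilde{\mathbf{V}}_0 = \sigma^{-1} G\tilde{\mathbf Y}_0 \stackrel{d}{=} \tfrac{1}{\sqrt 2}(\Delta_1,\ldots,\Delta_d)^\top$, yielding $\Var \tilde V^i_0 = \tfrac{1}{2}\cdot\tfrac{d-1}{d} = v^2$ and $\cov(\tilde V^i_0, \tilde V^j_0) = -\tfrac{1}{2d}$ for $i\ne j$, which is \eqref{eqn:transfertov3lemma7}. In my view this Brownian-bridge identification is the only step that requires a genuine idea rather than bookkeeping.

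\textbf{Short-time expansion.} Taylor-expanding $e^{\MA t} = I + t\MA + O(t^2)$ in the covariance formula,
\[
\E[\tilde V^i_0 \tilde V^i_t] \;=\; v^2 + t(G\MA(-2\MA)^{-1}G^\top)_{i,i} + O(t^2) \;=\; v^2 - \tfrac{t}{2}(GG^\top)_{i,i}+O(t^2),
\]
using the identity $\MA(-2\MA)^{-1} = -\tfrac12 I$. Stationarity forces $t\mapsto \E[\tilde V^i_0\tilde V^i_t]$ to be an even function of $t$, so the coefficient of $|t|$ in the expansion is $-\tfrac12 (GG^\top)_{i,i}$. A direct read from the definition of $G$ gives $(GG^\top)_{i,i}=2$ for $i\in\{2,\ldots,d-1\}$ and $(GG^\top)_{i,i}=1$ for $i\in\{1,d\}$, and a quick comparison with \eqref{eqn:parameterv0} yields $\tfrac12 (GG^\top)_{i,i} = v^2 A_i$ in both cases, which proves \eqref{eqn:lemma7directv1}.
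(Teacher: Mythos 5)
Your proof is correct, and its overall skeleton coincides with the paper's: both compute the explicit stationary covariance $\E[{\bf\tV}_0\otimes{\bf\tV}_t]=G(-2\MA)^{-1}e^{\MA t}G^\top$, Taylor-expand $e^{\MA t}$, and read off $(GG^\top)_{ii}$ for the linear term. The one step you handle genuinely differently is the evaluation of the constant term $G(-\MA)^{-1}G^\top$. The paper proves $G(-\MA)^{-1}G^\top=I-\tfrac1d\eta\eta^\top$ (with $\eta=(1,\ldots,1)^\top$) by pure linear algebra: from $G^\top G=-\MA$ it deduces that the range of $G(-\MA)^{-1}G^\top-I$ lies in the null space of $G^\top$, which is spanned by $\eta$, and then fixes the scalar by applying the resulting ansatz to $\eta$. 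You instead recognize $\sigma^{-1}{\bf \tilde Y}_0$ as a multiple of a discrete Gaussian bridge, so that ${\bf\tV}_0$ is $\tfrac1{\sqrt2}$ times its increment vector, i.e.\ $\tfrac1{\sqrt2}$ times a standard Gaussian conditioned on the hyperplane $\{\sum_i x_i=0\}$, whose covariance is the orthogonal projection $I-\tfrac1d\eta\eta^\top$. The two arguments establish exactly the same matrix identity; yours trades the algebraic manipulation for a probabilistic picture that also explains \emph{why} the answer is a projection, while the paper's is shorter to write down and uses nothing beyond $G^\top G=-\MA$. Two minor remarks. First, your coefficient $-\tfrac{t}{2}GG^\top$ in the expansion is the correct one: the paper's intermediate display \eqref{eqn:finexpr} writes $-GG^\top t$, which is missing the factor $\tfrac12$ needed to match $v^2A_i=\tfrac12(GG^\top)_{ii}$; your bookkeeping is consistent with the stated conclusion. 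Second, for the claim $\E[\tV^i_s\tV^j_t]=\E[\tV^i_0\tV^j_{|t-s|}]$ with $i\neq j$ and $s>t$ one needs, beyond stationarity, that the cross-covariance matrix $R(t)=G(-2\MA)^{-1}e^{\MA|t|}G^\top$ is symmetric; your explicit formula makes this evident, and the paper glosses over it in the same way.
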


We remark that the family of covariances $\E [\tV_0^i \tV_t^j ]$, $i,j\in\{1,\ldots,d\}$, $t\in\R$,
can be expressed in terms of the family of matrices $\MA e^{\MA t}$. 
However, we will  not need these representations.

\medskip
\begin{proof}[ of Lemma~\ref{lem:7directv1}]  By stationarity, $\E [\tV_s^i \tV_t^j ]= \E [\tV_0^i \tV_{|t-s|}^j ]$.
Let us write $v\otimes w:=(v_i w_j)_{i,j=1,\ldots,d-1}$. One can check using the explicit form of $({\bf \tilde Y}_t)$ that
$$
\E[ {\bf \tilde Y}_0 \otimes {\bf \tilde Y}_t ] = \frac{\sigma^2}{2} (-\MA)^{-1} e^{t \MA}.
$$
This yields (using ${\bf \tilde V}=\sigma^{-1} G {\bf \tilde Y}$)
\begin{equation}\label{eqn:finexpr}
\E[ {\bf\tV}_0 \otimes {\bf \tV}_t ] = \frac{1}{2} G (-\MA)^{-1} e^{t \MA} G^\top
=\frac{1}{2} G (-\MA)^{-1} G^\top - G G^\top \, t + o(t),
\end{equation}
as $t\to 0$. A direct computation gives
$$ 
(G G^\top)_{i,j} = \begin{cases}
	1 & \text{if } i=j \in \{1,d\}, \\
	2 & \text{if } i=j \in \{2, \ldots, d-1\}, \\
	-1 & \text{if } |i-j|=1, \\
	0 & \text{otherwise},
\end{cases}
$$ 
which identifies the linear term in the expansion (\ref{eqn:finexpr}). Furthermore, we claim that
\begin{equation} \label{eqn:QainverseQast}
	G \mathcal (-\MA)^{-1}G^\top = I - \tfrac{1}{d} P_\eta,
\end{equation}
where $P_\eta$ is the projection onto the vector $\eta= (1, 1, \ldots, 1)^\top \in
\R^d$, i.e.\ $P_\eta w = (\eta,w) \eta$. Indeed, one readily computes that $G^\top G
= -\MA$, and thus
\[
G^\top (G (-\MA)^{-1} G^\top - I) = G^\top G (G^\top G)^{-1} G^\top -
G^\top = 0,
\]
which means that the range of $G (-\MA)^{-1} G^\top - I$
is contained in the null space of $G^\top$.
The latter is spanned by the vector $\eta$,
and it thus follows that
\[
G (-\MA)^{-1} G^\top = I + r P_\eta
\]
for some $r \in \R$. The constant $r$ is determined by applying the last equation to $\eta$ and using that $G^\top \eta = {\bf 0}$ so that ${\bf 0}=\eta + r (\eta,\eta) \eta = \eta + r d \eta$, and so \eqref{eqn:QainverseQast} follows.

Now, \eqref{eqn:QainverseQast} says that the first term in the expansion (\ref{eqn:finexpr}), $G (-\MA)^{-1} G^\top$, has diagonal elements $1 - 1/d=(d-1)/d$ and off-diagonal elements $-1/d$.
\end{proof}

\subsection{A decoupling inequality}
\label{sec:decoupling}

An important strategy in the proofs of our main results will be to decouple events that happen in disjoint and well-separated time intervals. For this purpose, we develop the necessary tools in this subsection.

Let $(W_t)_{t\in\R}$ be a stationary centered $d$-dimensional process
with finite second moments. Consider the associated linear spaces
\[
   \WW_t^- :=  \span( W_s^j, s\leq t, 1\le j\le d)
\]
and
\[
   \WW_t^+ :=  \span( W_s^j, s\geq t, 1\le j\le d).
\]
The quantity
$$
   r(\theta):=\sup\{ \covcoef(Z_0,Z): Z_0\in  \WW_0^-, Z\in \WW_\theta^+\}
$$
is called  {\it linear mixing coefficient}; and $\covcoef(X,Y)$ is the correlation coefficient of the random variables $X,Y$. On the other hand, the quantity
$$
   \mix(\theta):=\sup_{A \in \sigma(W_t, t\leq 0),B\in \sigma(W_t, t\geq \theta)}
       |\P(A\cap B) - \P(A)\cdot \P(B)|
$$
is called  {\it strong mixing coefficient} (or $\alpha$-mixing).
We refer to \cite{bradley2005} for a survey on notions of mixing and their relations.

As an important tool in the proofs, we shall use the following classical decoupling result
from the mixing theory.

\begin{lem} \label{lem:nonstr.4.2}
Let $(W_t)_{t\in\R}$ be a stationary centered $d$-dimensional Gaussian process.
Then  for every $\theta>0$ we have $\mix(\theta)\leq  r(\theta)$.
\end{lem}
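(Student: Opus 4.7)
The plan is the classical Kolmogorov--Rozanov argument, which reduces the statement to a Hermite expansion after a canonical decomposition of Gaussian spaces. The first, soft step is to observe that for indicators $f := \ind_A - \P(A)$ and $g := \ind_B - \P(B)$ with $A \in \sigma(W_t, t\leq 0)$ and $B \in \sigma(W_t, t\geq \theta)$, one has $\P(A\cap B) - \P(A)\P(B) = \E[fg]$ and $\Var(f), \Var(g) \leq 1/4$. Consequently, it suffices to prove the stronger maximal correlation bound
\[
 |\E[fg]| \leq r(\theta) \, \|f\|_2 \|g\|_2
\]
for every centered $f \in L^2(\sigma(W_t, t\leq 0))$ and $g \in L^2(\sigma(W_t, t\geq \theta))$, since then $|\P(A \cap B) - \P(A)\P(B)| \leq r(\theta)/4 \leq r(\theta)$.

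To prove this maximal correlation bound, I would exploit a canonical correlation decomposition of the two Gaussian Hilbert spaces $\WW_0^-$ and $\WW_\theta^+$. Via an SVD-type argument for the cross-covariance operator, one produces orthonormal bases $(\xi_k)$ of $\WW_0^-$ and $(\eta_k)$ of $\WW_\theta^+$ with $\E[\xi_k \eta_\ell] = \rho_k \delta_{k\ell}$ and $\rho_k \in [0,1]$; by the variational definition of $r(\theta)$, $\sup_k \rho_k \leq r(\theta)$. A Gaussian-specific fact is then crucial: $\sigma(W_t, t\leq 0) = \sigma(\xi_k : k \geq 1)$ and likewise for $\WW_\theta^+$, so $L^2$ of these $\sigma$-algebras is spanned by the tensor Hermite systems $\prod_k H_{n_k}(\xi_k)/\sqrt{n_k!}$ and $\prod_k H_{m_k}(\eta_k)/\sqrt{m_k!}$. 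Mehler's identity $\E[H_n(\xi_k) H_m(\eta_k)] = n!\, \rho_k^n \delta_{nm}$, combined with independence across the index $k$, yields that the cross-expectation of two such tensor products is $(\prod_k n_k!)\prod_k \rho_k^{n_k}$ when the multi-indices agree and $0$ otherwise. Expanding centered $f$ and $g$ in these bases and applying Cauchy--Schwarz then gives $|\E[fg]| \leq (\sup_{\mathbf n \neq 0} \prod_k \rho_k^{n_k}) \|f\|_2 \|g\|_2$; since at least one exponent $n_k$ must be $\geq 1$, the supremum is bounded by $\sup_k \rho_k \leq r(\theta)$, which closes the argument.

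The main obstacle lies in the canonical correlation decomposition itself: when $\WW_0^-$ or $\WW_\theta^+$ is infinite-dimensional, the existence of the orthonormal system $(\xi_k, \eta_k, \rho_k)$ requires either compactness of the cross-covariance operator or an exhaustion through finite-dimensional subspaces followed by a limiting argument (which is legitimate because $r(\theta)$ is itself a supremum and the finite-dimensional bounds are uniform). The secondary subtle point is verifying that the $\sigma$-algebra generated by the Gaussian Hilbert space $\WW_0^-$ is exactly $\sigma(W_t, t\leq 0)$; this relies on the Gaussianity of every $W_t^j$ for $t \leq 0$, so that each coordinate lies in the $L^2$-closure of linear combinations of the $W_s^j$, $s \leq 0$, and the two $\sigma$-algebras therefore coincide. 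Once these points are handled, the Hermite expansion and the reduction to indicators are formal, and the inequality $\mix(\theta) \leq r(\theta)$ follows.
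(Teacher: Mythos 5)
Your argument is correct and follows the same conceptual route as the paper: both reduce the $\alpha$-mixing bound to a maximal-correlation ($\rho$-mixing) bound via $f=\ind_A-\P(A)$, $g=\ind_B-\P(B)$ with $\Var(f),\Var(g)\le 1/4$, and then use the Gaussian identity between the maximal correlation of the two $\sigma$-algebras and the linear coefficient $r(\theta)$. The difference is one of self-containedness: the paper simply cites inequality (1.12) of Bradley's survey for $\mix(\theta)\le\rho(\theta)$ and the Kolmogorov--Rozanov theorem for $\rho(\theta)=r(\theta)$, whereas you reprove the Kolmogorov--Rozanov inequality $\rho(\theta)\le r(\theta)$ (the only direction actually needed) via canonical correlations and the Hermite/Mehler expansion. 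Your sketch of that step is the standard correct one, and you rightly isolate the two points requiring care: the existence of the canonical pairs $(\xi_k,\eta_k)$ when the Gaussian spaces are infinite-dimensional, which is best handled by exhaustion over finite-dimensional time marginals (where the singular value decomposition is elementary and the bound is uniform because $r(\theta)$ is a supremum over the full spaces), followed by a martingale-convergence passage to the limit; and the identification of $\sigma(W_t,\,t\le 0)$ with the $\sigma$-algebra generated by the Gaussian space $\WW_0^-$. Apart from a harmless normalization slip in the stated cross-expectation of the normalized tensor Hermite products (it should be $\prod_k\rho_k^{n_k}$, not $(\prod_k n_k!)\prod_k\rho_k^{n_k}$, once the factors $1/\sqrt{n_k!}$ are included), the argument closes as you describe: your route buys a self-contained proof at the cost of length, while the paper's buys brevity at the cost of two external references.
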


The lemma tells us that the dependence between two events
that use information on $(W_t)$ for instants that are separated in time by at least $\theta$
can be evaluated via covariance characteristics.

\begin{proof}
It is true that $\mix(\theta) \leq \rho(\theta)= r(\theta)$. Here, $\rho$ is the {\it $\rho$-mixing coefficient} (see \cite{bradley2005}). Indeed,
the first inequality  is always true (see (1.12) in \cite{bradley2005}). The equality
between $\rho$ and $r$ is true because we deal with Gaussian processes, and it follows
from \cite[Theorem 1]{kolmogorovrozanov1960} (also see (7.1) in \cite{bradley2005}).
\end{proof}

\begin{lem} \label{lem:nonstr.4.2concrete}
The processes $(\tV_t^i)_{t\in\R}$ for $i\in\{1,\ldots,d\}$ defined in $\eqref{eqn:Vti}$ satisfy the relation
\be \label{eqn:mixingV}
   \sup_{A \in \sigma(\tV_t^i, t\leq 0, 1\le i\le d),B\in \sigma(\tV_t^i, t\geq \theta, 1\le i\le d))}
       |\P(A\cap B) - \P(A)\cdot \P(B)| \leq e^{-\amu \theta},
\ee
for all $\theta>0$ and $\amu:=\min_k|\lambda_{k}|$.
\end{lem}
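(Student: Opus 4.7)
The plan is to invoke Lemma~\ref{lem:nonstr.4.2} and reduce the task to bounding the linear mixing coefficient $r(\theta)$ of the stationary $d$-dimensional Gaussian process $(\tV_t)$ by $e^{-\amu\theta}$. Since $\tV = \sigma^{-1} G\,\tilde Y$ and the matrix $G$ has full column rank $d-1$, the closed linear spans $\WW_0^-$ and $\WW_\theta^+$ associated with $\tV$ coincide with the analogous spans generated by the components of the $(d-1)$-dimensional stationary process $\tilde Y$. I would therefore work directly with $\tilde Y$ throughout.

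The next step is to exploit the (multidimensional) Ornstein--Uhlenbeck dynamics. For every $t\ge \theta$,
$$
  \tilde Y_t = e^{(t-\theta)\MA}\tilde Y_\theta + \sigma\int_\theta^t e^{(t-u)\MA}\d B_u,
$$
and the stochastic integral, depending only on Brownian increments after time $\theta$, is independent of $\WW_0^-$. Hence, for any $Z\in \WW_\theta^+$ with decomposition $Z=Z_1+Z_2$, where $Z_1$ is the orthogonal projection of $Z$ onto the finite-dimensional subspace spanned by the components $\tilde Y_\theta^1,\ldots,\tilde Y_\theta^{d-1}$, one has $\cov(Z_0,Z)=\cov(Z_0,Z_1)$ while $\Var Z\ge \Var Z_1$. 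Consequently, the supremum defining $r(\theta)$ is attained by restricting to $Z$ of the form $c^\top \tilde Y_\theta$ for some $c\in\R^{d-1}$.

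Applying the same Markovian decomposition at time $0$, namely $\tilde Y_\theta = e^{\theta\MA}\tilde Y_0 + \sigma\int_0^\theta e^{(\theta-u)\MA}\d B_u$ with the second term independent of $\WW_0^-$, I obtain $\cov(Z_0,c^\top\tilde Y_\theta)=\cov(Z_0,c^\top e^{\theta\MA}\tilde Y_0)$. Normalising $\Var Z_0=1$ and $\Var(c^\top\tilde Y_\theta)=c^\top\Sigma c=1$, where $\Sigma:=\cov(\tilde Y_0)=\tfrac{\sigma^2}{2}(-\MA)^{-1}$ is a function of $\MA$ and hence commutes with $e^{\theta\MA}$, Cauchy--Schwarz then yields
$$
  \covcoef(Z_0,Z)^2 \le \Var(c^\top e^{\theta\MA}\tilde Y_0)= c^\top e^{\theta\MA}\Sigma e^{\theta\MA}c = c^\top e^{2\theta\MA}\Sigma c = \tilde c^\top e^{2\theta\MA}\tilde c,
$$
with $\tilde c:=\Sigma^{1/2}c$ satisfying $\|\tilde c\|^2=1$. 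Since $\MA$ is symmetric negative definite with spectral gap $\amu=\min_k|\lambda_k|$, the operator norm bound $\|e^{2\theta\MA}\|\le e^{-2\amu\theta}$ gives $\covcoef(Z_0,Z)\le e^{-\amu\theta}$, as required.

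The main (mild) obstacle is the bookkeeping in the first two steps: correctly translating between the spans associated with $\tV$ and $\tilde Y$, and carefully justifying the two independence claims for the future/intermediate Brownian increments against $\WW_0^-$. Once these reductions are in place, the exponential decay rate $e^{-\amu\theta}$ emerges from a single application of Cauchy--Schwarz combined with the elementary spectral bound on $e^{\theta\MA}$.
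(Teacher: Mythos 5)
Your proof is correct, and it reaches the bound by a genuinely different route from the paper's. Both arguments invoke Lemma~\ref{lem:nonstr.4.2} (the Kolmogorov--Rozanov reduction to the linear mixing coefficient) and both exploit the Markov property twice: once to collapse $Z\in\WW_\theta^+$ onto the time slice $\span(\tilde Y_\theta^1,\dots,\tilde Y_\theta^{d-1})$, and once to propagate from time $\theta$ back to time $0$. The difference is in how the vector structure is handled. The paper first diagonalizes $\MA$, rewrites $\tV$ in terms of $d-1$ \emph{independent scalar} stationary Ornstein--Uhlenbeck processes $\tU^j$ with rates $\lambda_j$, proves the scalar estimate $\E[Z_0Z]\le e^{\lambda_j\theta}(\E Z_0^2)^{1/2}(\E Z^2)^{1/2}$ component by component, and then recombines via the orthogonal decomposition $Z=\sum_k Z_k$ and Parseval's identity. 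You instead keep the vector process $\tilde Y$ intact and extract the rate from a single spectral bound $\tilde c^\top e^{2\theta\MA}\tilde c\le e^{-2\amu\theta}$ for unit $\tilde c$, using that $\Sigma=\tfrac{\sigma^2}{2}(-\MA)^{-1}$ is a function of $\MA$ and hence commutes with $e^{\theta\MA}$. Your version is more compact and avoids the component-wise bookkeeping; the paper's isolates a reusable scalar OU computation. Two small points to tighten in a write-up: the supremum over $\WW_\theta^+$ is not \emph{attained} on $\span(\tilde Y_\theta^j)$ but merely dominated by the supremum there (via $\cov(Z_0,Z)=\cov(Z_0,Z_1)$ together with $\Var Z\ge\Var Z_1$, after taking absolute values or replacing $Z$ by $-Z$); and the independence of $\int_\theta^t e^{(t-u)\MA}\,\dd B_u$ from $\WW_0^-$ should be stated for the two-sided stationary extension, or replaced by the equivalent covariance identity $\E\big[(\tilde Y_t-e^{(t-\theta)\MA}\tilde Y_\theta)\otimes\tilde Y_s\big]=0$ for $s\le\theta\le t$, which follows directly from $\E[\tilde Y_s\otimes\tilde Y_t]=\Sigma e^{(t-s)\MA}$. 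Neither is a gap.
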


\begin{proof}
Since $\tV_t^i\in \span(\tU_t^j, 1\le j\le d-1)$ where $(\tU_t^j)$ are independent Ornstein--Uhlenbeck processes with parameters $\lambda_j$, respectively, (cf.\ (\ref{eqn:dglu0stat})) by the definition (see (\ref{eqn:Vti})), it is enough to prove
\be \label{eqn:prefactors}
   \sup_{A \in \sigma(\tU_t^j, t\leq 0, 1\le j\le d-1), B\in \sigma(\tU_t^j, t\geq \theta, 1\le j\le d-1))}
       |\P(A\cap B) - \P(A)\cdot \P(B)| \leq e^{-\amu\theta}.
\ee
By Lemma~\ref{lem:nonstr.4.2}, we only need to check that for all $\theta>0$, and all random variables
$Z_0\in \span(\tU_s^j, s\le 0, j\le d-1)$, $Z\in \span(\tU_\tau^{j},\tau\ge \theta, j\le d-1)$,
 it is true that
\be \label{eqn:covarianceU}
   \covcoef(Z_0, Z) \le e^{- \amu\theta}.
\ee
We split the proof of \eqref{eqn:covarianceU} in few small steps.

{\it Step 1.}
Let $U_t$ be a standard real stationary Ornstein--Uhlenbeck process with covariance
$\E U_t U_s= e^{-|t-s|/2}$.

We fix $\theta\ge 0$ and observe that $\E[U_\theta| U_s, s\le 0]= e^{-\theta/2}U_0$
because
\[
  \E[(U_\theta-e^{-\theta/2}U_0)U_s] = e^{-(\theta-s)/2}- e^{-\theta/2}e^{s/2}=0, \qquad \forall s\le 0.
\]
Hence, for every $Z_0\in \span(U_s,s\le 0)$ we have
\be \label{eqn:covU_e1}
   \E[Z_0 U_\theta ]= \E [Z_0\, \E[U_\theta| U_s, s\le 0] ]
   = e^{-\theta/2} \E [Z_0 U_0]  \le  e^{-\theta/2} (\E Z_0^2)^{1/2}.
\ee

{\it Step 2.} Let now  $Z\in \span(U_\tau,\tau\ge \theta)$. By using the Markov property and the description of conditional expectations for two Gaussian random variables,
we get
\[
  \E[Z|U_s, s\le \theta] = \E[Z|U_\theta]= \E[Z U_\theta]\cdot  U_\theta.
\]

By the Cauchy-Schwarz inequality
\[
   \E[Z U_\theta] \le  (\E [Z^2])^{1/2} (\E [U_\theta^2])^{1/2} = (\E [Z^2])^{1/2}.
\]
Furthermore, for any $Z_0\in \span(U_s,s\le 0)$ we have, by using \eqref{eqn:covU_e1},
\begin{eqnarray} \nonumber
  \E[Z_0 Z]&=&\E[Z_0 \cdot \E[Z|U_\theta]]= \E[ Z_0 \cdot   \E[Z U_\theta] \cdot U_\theta ]
\\ \label{eqn:covU_e2}
  &=& \E[Z_0 \cdot U_\theta] \cdot \E[Z U_\theta]  \le   e^{-\theta/2} (\E [Z_0^2])^{1/2} (\E [Z^2])^{1/2}.
\end{eqnarray}

{\it Step 3.} Let now $\tU_t^j$, $1\le j\le d-1$, be independent centered stationary
Ornstein--Uhlenbeck processes with covariances $\E[\tU_s^j\tU_t^j]=e^{\lambda_j|s-t|}$ where $\lambda_j<0$.
(Compared to the processes used in the definition in $\eqref{eqn:Vti}$, we ignore the irrelevant constant
factors in front of the $\tilde U^j$ and let the variances be unit ones, which we can do w.l.o.g., as the
left-hand side of \eqref{eqn:prefactors} does not depend on constant prefactors of the $\tilde U^j$.)
Every such process $\tU_t^j$ can be reduced to a standard
OU-process by scaling of time. Therefore, \eqref{eqn:covU_e2} transforms into
\be \label{eqn:covU_e3}
    \E[Z_0 Z] \le  e^{\lambda_j \theta} (\E Z_0^2)^{1/2} (\E Z^2)^{1/2},
\ee
valid for all $Z_0\in \span(\tU_s^j,s\le 0)$ and $Z\in \span(\tU_\tau^j,\tau\ge \theta)$.

{\it Step 4.} Let us now fix an index $k$ and $Z\in \span(\tU_\tau^{k},\tau\ge \theta)$.
By using \eqref{eqn:covU_e3}, we have
\be \label{eqn:covU_e4}
  \E[\tU_0^{k} Z]\le e^{\lambda_{k} \theta} (\E [Z^2])^{1/2}.
\ee
By using the independence of the processes $\tU_t^j$ with different $j$, it is easy to see that
\be \label{eqn:covU_e5}
  \E[Z| \tU_s^j, s\le 0, 1\le j\le d-1] = \E[\tU_0^{k} Z] \tU_0^{k}.
\ee

{\it Step 5.} Let now  $Z\in \span(\tU_\tau^{j},\tau\ge \theta, 1\le j\le d-1)$.
Then $Z$ can be represented as a finite orthogonal sum
\[
   Z=\sum_{k=1}^{d-1} Z_k,  \qquad \textrm{with } Z_k\in  \span(\tU_\tau^{k},\tau\ge \theta),
\]
and by \eqref{eqn:covU_e5}  we have
\[
  \E[Z| \tU_s^j, s\le 0, j\le d-1] = \sum_{k=1}^{d-1} \E[Z_k|\tU_s^j, s\le 0, 1\le j\le d-1 ]
  =  \sum_{k=1}^{d-1} \E[\tU_0^{k} Z_k] \tU_0^{k}.
\]
Finally, by using Parseval's equation in the fourth step and \eqref{eqn:covU_e4} in the fifth step, we obtain for every
$Z_0\in \span(\tU_s^j, s\le 0, 1\le j\le d-1)$
\begin{eqnarray*}
    \E[Z_0 Z] &=& \E\left[ Z_0 \cdot \E[Z| \tU_s^j, s\le 0, 1\le j\le d-1] \right]
\\
   &=&     \sum_{k=1}^{d-1}  \E[\tU_0^{k} Z_k] \E [Z_0 \tU_0^{k}]
\\
  &\le&   \left(  \sum_{k=1}^{d-1}  \E[\tU_0^{k} Z_k]^2 \right)^{1/2} \cdot
          \left(  \sum_{k=1}^{d-1} [\E [Z_0 \tU_0^{k}]]^2 \right)^{1/2}
\\
   &\le&    \left(  \sum_{k=1}^{d-1}  \E[\tU_0^{k} Z_k]^2 \right)^{1/2} \E [Z_0^2]^{1/2}
\\
   &\le&   \left( \sum_{k=1}^{d-1} e^{2\lambda_{k} \theta}
           \E [Z_k^2] \right)^{1/2} \E [Z_0^2]^{1/2}
\\
  &\le&    e^{- \min_{1\le k\le d-1} |\lambda_{k}| \theta}
             \left(\sum_{k=1}^{d-1} \E[ Z_k^2] \right)^{1/2} \E [Z_0^2]^{1/2}
\\
   &=&  e^{-\amu\theta} (\E[Z^2])^{1/2}\E [Z_0^2]^{1/2},
\end{eqnarray*}
and \eqref{eqn:covarianceU} is verified.
\end{proof}

\subsection{Pickands lemma}
In the process of the proofs, we shall frequently use the following theorem that identifies
the exact asymptotics of  the large deviation probability of a stationary Gaussian process.
It is usually called Pickands lemma \cite{pickands}, \cite{michna}, also see Theorem~2.1
in \cite{QW} (with a slightly differently defined constant $\mathcal{H}_\alpha$)
and Theorem 9.15 in \cite{piterbarg}. The assertion with the varying time interval length
that we use here is due to V.\,Piterbarg.

\begin{lem} \label{lem:pickands}
Let $(W_t)_{t\in \R}$ be a stationary centered Gaussian process with covariance expansion
$$
    \cov(W_t,W_s)
    =v^2 \left( 1 - A |t|^\alpha + o(|t|^\alpha) \right),\qquad \text{as $t\to 0$},
$$
for some $v>0$, $A>0$, and $0<\alpha\leq 2$. Assume that
\[
  \limsup_{t\to\infty} \covcoef(W_t,W_0)<1.
\]
Then
$$
    \P( \max_{s\in[0,t]} W_s > x ) \sim
    \frac{A^{1/\alpha} \mathcal{H}_\alpha}{\sqrt{2\pi}} \cdot
       t \cdot (x/v)^{2/\alpha-1} \, \exp( -\frac{(x/v)^2}{2} ),
$$
for any $x$ and $t$ such that the right-hand side tends to zero,
where $\mathcal{H}_\alpha$ is Pickands constant (in particular, $\mathcal{H}_1=1$).
\end{lem}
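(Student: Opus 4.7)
My approach follows Pickands' classical scheme of separating local and global scales, and I will organize the argument into three main steps. Throughout I may assume $v=1$ by passing to $W/v$ and the threshold $x/v$. The natural local length scale is $x^{-2/\alpha}$, on which the increments of $W$ across a block have order $x^{-1}$, since $\E[(W_{t+s}-W_t)^2]\approx 2A|s|^\alpha$. I therefore fix a large auxiliary parameter $\Delta>0$, partition $[0,t]$ into $N:=\lfloor tx^{2/\alpha}/\Delta\rfloor$ consecutive blocks $I_k$ of length $\Delta x^{-2/\alpha}$, and set $M_k:=\max_{s\in I_k} W_s$.

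The first core technical step is the single-block asymptotic
\[
\P(M_k > x) = (1+o(1))\, \frac{A^{1/\alpha} H_\alpha(\Delta)}{\sqrt{2\pi}}\, x^{-1} e^{-x^2/2},
\]
uniformly in $k$, where $H_\alpha(\Delta):=\E\exp\bigl(\max_{u\in[0,\Delta]}(\sqrt{2}B^{\alpha/2}_u-|u|^\alpha)\bigr)$ and $B^{\alpha/2}$ denotes a standard fractional Brownian motion with Hurst index $\alpha/2$. I would derive this by conditioning on the starting value of the block, writing
\[
\P(M_k > x) = \frac{1}{x}\int_\R \P\bigl(M_k > x \mid W_0 = x-y/x\bigr)\, \varphi(x-y/x)\, dy,
\]
and then rescaling time by $u=A^{1/\alpha}x^{2/\alpha}s$. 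The covariance hypothesis ensures that the conditioned process $x(W_{u/(A^{1/\alpha}x^{2/\alpha})}-(x-y/x))$ converges, strongly enough to pass to the maximum, to the Gaussian process $\sqrt{2}B^{\alpha/2}_u-|u|^\alpha+y$ on $[0,\Delta]$; integrating in $y$ produces the Pickands functional $H_\alpha(\Delta)$, and the $A^{1/\alpha}$ prefactor is the Jacobian of the time change.

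For the second step I would apply Bonferroni inequalities,
\[
\sum_{k=0}^{N-1}\P(M_k>x) - \!\!\sum_{0\le k<\ell\le N-1}\!\P(M_k>x,\, M_\ell>x) \le \P\bigl(\max_{s\in[0,t]} W_s > x\bigr) \le \sum_{k=0}^{N-1}\P(M_k>x),
\]
so that combined with the first step and $N\sim tx^{2/\alpha}/\Delta$ one obtains the claimed asymptotics up to $H_\alpha(\Delta)/\Delta$, \emph{provided} the pair sum is negligible. Controlling this pair sum is the main obstacle. I would split it according to block separation $\ell-k$: for far pairs with $\ell-k>n_0(x)$ (slowly growing), Berman's comparison inequality together with the hypothesis $\limsup_{t\to\infty}\covcoef(W_0,W_t)<1$ yields approximate independence, so that the contribution is bounded by a negligible multiple of $\bigl(\sum_k\P(M_k>x)\bigr)^2$; for nearby pairs $\ell-k\le n_0(x)$, a sharper local analysis (again via conditional rescaling to a fractional Brownian motion limit) shows that a second excursion above level $x$ in a nearby block carries an extra factor tending to $0$, so that the total contribution is $o(N\cdot x^{-1}e^{-x^2/2})$. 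A subadditivity argument for $H_\alpha(\Delta)$ then ensures that $H_\alpha(\Delta)/\Delta$ converges to a finite positive constant $\mathcal H_\alpha$ as $\Delta\to\infty$, and sending $\Delta\to\infty$ after $x\to\infty$ finishes the proof. Uniformity in $x$ and $t$ (needed for Piterbarg's version with varying interval length) follows because every error term is quantitatively controlled by $N$, by the local covariance expansion, and by the exponential decay of correlations guaranteed by the $\limsup<1$ hypothesis.
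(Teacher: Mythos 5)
The paper does not prove this lemma at all: it is quoted as a known result (Pickands; Qualls--Watanabe; Piterbarg, Theorem 9.15), so there is no internal proof to compare against. Your sketch reproduces the classical double-sum method, and its architecture --- blocks of length $\Delta x^{-2/\alpha}$, the single-block asymptotics via conditioning on the initial value and rescaling to $\sqrt{2}B^{\alpha/2}_u-|u|^\alpha$, Bonferroni, splitting the pair sum into near and far pairs, and subadditivity of $H_\alpha(\Delta)$ --- is exactly the right one. The single-block step and the near-pair estimate are standard and I have no objection to them as a sketch (modulo the cosmetic point that the rescaling $u=A^{1/\alpha}x^{2/\alpha}s$ produces $H_\alpha(A^{1/\alpha}\Delta)$ rather than $A^{1/\alpha}H_\alpha(\Delta)$; these agree after dividing by $\Delta$ and letting $\Delta\to\infty$).

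The genuine gap is in the far-pair estimate, precisely where the stated uniformity in $t$ matters. Berman's comparison inequality under the sole hypothesis $\sup_{|s|\ge\eps}|\covcoef(W_s,W_0)|\le\delta<1$ bounds the error for a far pair of blocks by a constant times $\exp\bigl(-\tfrac{x^2}{1+\delta}\bigr)$; it does \emph{not} give a bound proportional to the product $\P(M_k>x)\P(M_\ell>x)\asymp x^{-2}e^{-x^2}$, which is far smaller. Summing over the $\sim N^2$ far pairs, the total error is of order $N^2e^{-x^2/(1+\delta)}$, and comparing with the main term $N x^{-1}e^{-x^2/2}$ one needs $N=o\bigl(x^{-1}\exp\bigl(x^2\tfrac{1-\delta}{2(1+\delta)}\bigr)\bigr)$. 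Since $\tfrac{1-\delta}{2(1+\delta)}<\tfrac12$ for any $\delta>0$, this fails when $t$ (hence $N\sim tx^{2/\alpha}/\Delta$) grows like $e^{cx^2}$ with $c$ close to $1/2$ --- a range explicitly permitted by the statement, which only requires the right-hand side to tend to zero. Your closing claim that the $\limsup<1$ hypothesis guarantees ``exponential decay of correlations'' is false: it only bounds the correlation away from $1$ and says nothing about decay. To close the argument for long intervals one must either impose an actual decay condition on the correlation (e.g.\ Berman's condition $\covcoef(W_t,W_0)\ln t\to0$, as in Qualls--Watanabe), or restrict $t$ to, say, polynomial growth in $x$ --- which is in fact all the present paper ever uses ($L=\sigma^{-3}$, $\ell=B\sigma^{-2}$ with $x=\sigma^{-1}$), and for which your Berman step does suffice.
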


\section{Deterministic regime ($\sigma=0$) and very fast pulling regime}
\label{sec:determ}

In this section, we show that in the deterministic regime (with $\eps\to 0, \sigma=0$)
the exit occurs at the last position, i.e. $\tau=\tau^d$. Then we extend this
result to stochastic systems
satisfying $\eps\to 0, \sigma/\eps \to 0$. It is therefore natural to call the
latter regime {\it quasi-deterministic}.

\subsection{Deterministic regime}

\begin{prop}
\label{prop:determ}
Let  $\eps\to 0, \sigma=0$. Then the exit times are described by the formula
\[
  \tau^i = t_\ast + \frac{d^2-1}{6}- \frac{i(i-1)}{2} +o(1),
  \quad \textrm{as } \eps\to 0,\quad 1\le i\le d.
\]
In particular,
\[
  \tau:=\min_{1\le i\le d} \tau^i = \tau^d = t_\ast -\frac{(d-1)(2d-1)}{6} +o(1),
  \, \quad \textrm{as } \eps\to 0.
\]
\end{prop}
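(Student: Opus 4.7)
The plan is to reduce the problem to a scalar equation for each $\tau^i$ using the decomposition already established. When $\sigma=0$, Lemma~\ref{lem:lem1} gives ${\bf Y}\equiv 0$ and the deterministic decomposition \eqref{eqn:decompositionofx}, combined with Lemma~\ref{lem:Delta_h_Z}, produces the identity
\[
   X^i_t-X^{i-1}_t = 1+\frac{\eps t}{d}+\eps\delta_i+\eps\bigl(Z^i_t-Z^{i-1}_t\bigr),
\]
where $\delta_i=h_i-h_{i-1}=\tfrac{1-d^2}{6d}+\tfrac{i(i-1)}{2d}$ and the $Z^i$ are uniformly bounded and decay to $0$ as $t\to\infty$ (with $Z^0\equiv Z^d\equiv 0$). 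Thus $\tau^i$ is the first positive zero of
\[
   f_i(t):= -1+\frac{\eps t}{d}+\eps\delta_i+\eps\bigl(Z^i_t-Z^{i-1}_t\bigr).
\]

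Next I would establish existence and localization of this zero. Since $\sup_t|Z^i_t|<\infty$, the $\eps$-dependent perturbation $\eps\delta_i+\eps(Z^i_t-Z^{i-1}_t)$ is uniformly $O(\eps)$, so for $\eps$ small enough $f_i$ is strictly negative on any fixed compact $[0,T_0]$, while the linear drift $\eps t/d$ eventually forces $f_i\to+\infty$. Hence a zero exists, and every zero lies in a region where $t\to\infty$ as $\eps\to 0$. On such a region the exponential decay of $Z^i$ makes $f_i$ arbitrarily close to the strictly increasing affine function $-1+\eps t/d+\eps\delta_i$, so the zero is unique and is exactly
\[
   \tau^i = t_\ast-d\delta_i-d\bigl(Z^i_{\tau^i}-Z^{i-1}_{\tau^i}\bigr).
\]
Since $\tau^i\to\infty$ and $Z^i_t\to 0$, the right-most term is $o(1)$, giving
\[
   \tau^i = t_\ast-d\delta_i+o(1)= t_\ast+\frac{d^2-1}{6}-\frac{i(i-1)}{2}+o(1).
\]

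Finally, for the minimum I would observe that $i\mapsto i(i-1)/2$ is strictly increasing on $\{1,\ldots,d\}$, so among the deterministic constants the smallest is attained uniquely at $i=d$. Since all the $o(1)$ corrections vanish as $\eps\to 0$ while the constants differ by strictly positive amounts independent of $\eps$, for $\eps$ small enough $\tau^d<\tau^i$ for every $i<d$, so $\tau=\tau^d$. The arithmetic $\tfrac{d^2-1}{6}-\tfrac{d(d-1)}{2}=-\tfrac{(d-1)(2d-1)}{6}$ yields the stated expression for $\tau$.

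The only technical point requiring any care is the uniqueness/localization of the zero of $f_i$; since $f_i$ is the sum of a linear-in-$t$ piece and a uniformly bounded piece (in fact exponentially decaying after subtraction of the constant $h_i$), this is straightforward and I would expect no real obstacle in the proof.
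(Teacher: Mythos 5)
Your proposal is correct and follows essentially the same route as the paper: with $\sigma=0$ the decomposition of Lemmas~\ref{lem:lem1} and~\ref{lem:Delta_h_Z} reduces the exit condition to the scalar identity $\tau^i = t_\ast - d(\delta_i + Z^i_{\tau^i}-Z^{i-1}_{\tau^i})$, and the decay of $Z$ gives the $o(1)$ term. The only difference is that you spell out the localization step ($\tau^i\to\infty$, hence $Z^i_{\tau^i}=o(1)$) and the uniqueness of the zero, which the paper leaves implicit; this is a welcome addition but not a change of method.
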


 \begin{proof}[ of Proposition~\ref{prop:determ}]
Observe that in the deterministic regime the stochastic term $\sigma V_t^i$ vanishes
from \eqref{eqn:tauiV} and we have
\[
  \tau^i = \inf\{ t\geq 0 \ |
               \frac{t_\ast - t}{t_\ast}  =  \eps (\delta_i+ Z^i_t - Z^{i-1}_t) \}
\]
where $t_\ast=\tfrac{d}{\eps}$ and  $\delta_i:=\tfrac{1-d^2}{6d}+\tfrac{i(i-1)}{2d}$.
Using \eqref{eqn:Zproperties} we obtain
\begin{eqnarray*}
  \tau^i&=& t_\ast - d (\delta_i+ Z^i_{\tau^i} - Z^{i-1}_{\tau^i})
\\
  &=& t_\ast - d (\delta_i+o(1))
\\
  &=& t_\ast + \tfrac{d^2-1}{6}-\tfrac{i(i-1)}{2} +o(1),
\end{eqnarray*}
which is the claim of the proposition.
\end{proof}

\subsection{Quasi-deterministic regime}

We now extend the results for the deterministic regime to the very fast
pulling (quasi-deterministic) regime.

\begin{proof}[ of Theorem~\ref{thm:quasideterm}]
For identifying  the end of the chain as the exit position, it is sufficient
to prove that for each $i$ such that $1\leq i<d$ one has
\be \label{eqn:tauid}
   \P( \tau=\tau^i\le \tau^d)\to 0.
\ee
Let us fix an $i<d$ and some $M>0$.
The following inequality is the starting point for proving \eqref{eqn:tauid},

\be \label{eqn:tauid0}
    \P(\tau=\tau^i\le \tau^d)\le \P(\tau^i\le t_\ast-M)
           + \P( t_\ast-M \le \tau^i\le t_\ast, \tau^i\le \tau^d).
\ee
{\it Step 1:} Let us proceed with the evaluation of the second probability in
\eqref{eqn:tauid0}.
Assuming that $\tau^i\le \tau^d$, letting $t:=\tau^i$,
and using expression \eqref{eqn:differenceX} we transform the inequality
\[
    X^i_t-X^{i-1}_t = 2 \ge  X^d_t-X^{d-1}_t
\]
into
\[
   \sigma (V^i_t-V^d_t)
   \ge
   \eps (\delta_d-\delta_i -Z^i_t+Z^{i-1}_t + Z^d_t-Z^{d-1}_t),
\]
at $t=\tau^i$. Recall that $t\ge t_\ast-M\to\infty$; by \eqref{eqn:Zproperties}
we have
\[
    Z^i_t-Z^{i-1}_t - Z^d_t+Z^{d-1}_t =o(1).
\]
We have thus seen that on $\tau^i\leq\tau^d$
\[
  \sup_{s\in [t_\ast-M,t_\ast]} (V^i_s-V^d_s) \ge (V^i_t-V^d_t)
  \ge \frac{\eps}{\sigma} \, (\delta_d-\delta_i +o(1)),
\]
and recall that $\delta_d>\delta_i$ for $i<d$.

Therefore, we obtain
\begin{eqnarray*}
    &&  \P( t_\ast-M \le \tau^i\le t_\ast, \tau^i\le \tau^d)
\\
   &\le& \P\left(
      \sup_{s\in [t_\ast-M,t_\ast]} (V^i_s-V^d_s)
      \ge \frac{\eps}{\sigma} \, (\delta_d-\delta_i +o(1))
         \right)
\\
    &\le& \P\left(
      \sup_{s\in [t_\ast-M,t_\ast]} |V^i_s-V^d_s|
      \ge \frac{\eps}{\sigma} \, (\delta_d-\delta_i +o(1))
            \right)
\\
    &\le& \P\left(
      \sup_{s\in [t_\ast-M,t_\ast]} |\widetilde{V}^i_s-\widetilde{V}^d_s|
      \ge \frac{\eps}{\sigma} \, (\delta_d-\delta_i +o(1))
            \right)
\\
    &=& \P\left(
      \sup_{s\in [0,M]} |\widetilde{V}^i_s-\widetilde{V}^d_s|
      \ge \frac{\eps}{\sigma} \, (\delta_d-\delta_i +o(1))
           \right) \to 0
\end{eqnarray*}
for every fixed $M$.
Here we used a stationary version $\widetilde{V}^i$ of $V^i$, applied the Anderson inequality
(3rd step), then stationarity (4th step), and finally used that
$\frac{\eps}{\sigma}\to\infty$.
\medskip

{\it Step 2:}  Let us now evaluate the first probability in \eqref{eqn:tauid0},
assuming additionally that $M$ is chosen so large that
$\frac{M}{2d} >  K^i:=|\delta_i| + C^i_Z+C^{i-1}_Z$, where the constant $C_Z^i$ is defined in
\eqref{eqn:Zproperties}.

Using the representation \eqref{eqn:differenceX} for $t=\tau^i$ we see that
the exit condition $ X_t^i - X_t^{i-1}=2$  is equivalent to
\[
   \sigma V^i_t = \frac{t_\ast-t}{t_\ast}- \eps ( \delta_i + Z_t^i-Z_t^{i-1})
     =\eps \left( \frac{t_\ast-t}{d}- (\delta_i + Z_t^i - Z_t^{i-1}) \right),
\]
hence,
\[
    V^i_t = \frac{\eps}{\sigma}
            \left( \frac{t_\ast-t}{d}- (\delta_i + Z_t^i-Z_t^{i-1}) \right)
   \ge \frac{\eps}{\sigma} \left( \frac{t_\ast-t}{d}- K^i \right).
\]
Notice that the right hand side is positive for all $t\le t_\ast-M$ due
to the choice of $M$. Therefore,
\begin{eqnarray*}
  \P( \tau^i\le t_\ast-M)
   &\le&
  \P\left(\exists t\in[0,t_\ast-M]\ :\
  V^i_t \ge \frac{\eps}{\sigma} \left( \frac{t_\ast-t}{d}- K^i \right)\right)
\\
   &\le&
 \P\left(\exists t\in[0,t_\ast-M]\ :\
 |V^i_t| \ge \frac{\eps}{\sigma} \left( \frac{t_\ast-t}{d}- K^i\right)\right)
\\
   &\le&
  \P\left(\exists t\in[0,t_\ast-M]\ :\ |\tV^i_t|
     \ge \frac{\eps}{\sigma} \left( \frac{t_\ast-t}{d}- K^i\right)\right).
\end{eqnarray*}
Here we used again the stationary version  $\tV^i$ of  $V^i$
and applied the Anderson inequality.
Furthermore, by stationarity, and using $\frac{M}{2d} \ge K^i$, we have
\begin{eqnarray*}
 &&\P\left(\exists t\in[0,t_\ast-M]\ :\
         |\tV^i_t| \ge \frac{\eps}{\sigma}
         \left( \frac{t_\ast-t}{d}- K^i\right)\right)
 \\
   &\le&
  \P\left(\exists s\in[M,\infty)\ :\
        |\tV^i_s| \ge \frac{\eps}{\sigma}
        \left( \frac{s}{d}-  K^i\right)\right)
\\
   &\le&
       \P\left(\exists s\in[M,\infty)\ :\
       |\tV^i_s| \ge \frac{\eps}{\sigma} \, \frac{s}{2d} \right)
   \to 0,
\end{eqnarray*}
since $\frac{\eps}{\sigma} \to\infty$. We conclude that
\[
   \P( \tau^i\le t_\ast-M) \to 0,
      \qquad \textrm{as }  \frac{\eps}{\sigma} \to\infty \ \textrm{and }
      \frac{M}{2d}\ge K^i.
\]
Now \eqref{eqn:tauid0} yields the result for the exit position.

{\it Step 3:} We finally prove the statement about the exit time.
Recall that for large enough $M$ we already proved
$\P(\tau^d<t_\ast-M)\to 0$.

Fix a small $\kappa>0$. Recall that by \eqref{eqn:Zproperties} the term $Z_t^d-Z_t^{d-1}$ tends to zero,
as $t\to\infty$,
so that for $t>t_\ast-M$ we have, say,
\[
   |Z_t^d-Z_t^{d-1}| \leq \frac{\kappa}{2d}.
\]
Therefore, we obtain from  \eqref{eqn:differenceX}  
\begin{eqnarray*}
    &&\P(t_\ast-M \leq \tau^d \leq t_\ast - d \delta_d - \kappa)
\\
   &\le& \P( \exists t \in [t_\ast-M,t_\ast - d \delta_d - \kappa] :
            V_t^d \geq \frac{\eps}{\sigma} ( \frac{t_\ast - t}{d}
            - \delta_d - (Z_t^d - Z_t^{d-1}) ) )
\\
   &\leq & \P\left( \exists t \in [t_\ast-M,t_\ast - d \delta_d - \kappa] :
            V_t^d \geq \frac{\eps}{\sigma} ( \kappa/d - \kappa/(2d) ) \right)
\\
    &\leq & \P\left( \exists t \in [t_\ast-M,t_\ast - d \delta_d - \kappa] :
    V_t^d \geq \frac{\eps}{\sigma}\cdot \frac{\kappa}{2d} \right) \to 0,
\end{eqnarray*}
as above. Since $\kappa$ can be chosen arbitrarily small, this estimate shows
that $\tau^d \geq t_\ast - d \delta_d - o_P(1)$.

On the other hand, for any fixed small $\kappa>0$ let $t=t(\eps,\kappa):= t_\ast - d \delta_d +\kappa$.
Then, by using again \eqref{eqn:differenceX} with $i=d$,
\begin{eqnarray*}
    \P( \tau^d \leq t)
    &\geq& \P \left( V_t^d \geq \frac{\eps}{\sigma}
    \big( \frac{t_\ast-t}{d} - \delta_d -(Z_t^d - Z_t^{d-1}) \big) \right)
\\
    &\geq&\P\left( V_t^d \geq \frac{\eps}{\sigma}
           \big( \frac{t_\ast-t}{d} - \delta_d +\frac{\kappa}{2d} \big)\right)
\\
    &=&\P\left( V_t^d \geq  - \frac{\eps}{\sigma} \cdot \frac{\kappa}{2d} \right)
     \to 1,
\end{eqnarray*}
because the law of $V_t^d$ converges to a non-degenerated Gaussian law, and the level
tends to $-\infty$.
Since $\kappa$ can be chosen arbitrarily small, this estimate shows that
$\tau^d \leq t_\ast-d \delta_d +o_P(1)$.
\end{proof}

\section{The case of very slow pulling and no pulling}
\label{sec:nopulling}
\subsection{Result for the stationary model}
We start with considering a stationary model that captures the main features of our
(nonstationary) problem. We will make later a relatively easy passage from the
stationary case to the non-stationary one.

Let $\tV_t^i$ be the stationary Gaussian processes defined in \eqref{eqn:Vti}.
Define the related exit times
\[
  \ttau^i:=\inf\left\{ t: \sigma \tV_t^i \ge 1  \right\}
\]
and $\ttau:=\min_{1\le i\le d} \ttau^i$ (cf.\  \eqref{eqn:differenceX}). The main result for stationary case is as follows.

\begin{prop} \label{prop:veryslowpullingstat}
Let $\sigma\to 0$. Then
$$
\P( \ttau=\ttau^i ) \to \begin{cases}
                         \frac{1}{d-1} & i\in\{2,\ldots,d-1\};\\
                         \frac{1}{2(d-1)} & i\in\{1,d\}.
                      \end{cases}
$$
Further, for any $i\in\{1,\ldots,d\}$,
$$
\ttau^i \cdot (\sigma v)^{-1} \exp( - (\sigma v)^{-2}/2 ) \cdot \frac{A_i}{\sqrt{2\pi}}\tod \mathcal{E},
$$
and
$$
\ttau \cdot (\sigma v)^{-1} \exp( - (\sigma v)^{-2}/2 ) \cdot \frac{2d}{\sqrt{2\pi}}\tod \mathcal{E},
$$
where $\mathcal{E}$ is a standard exponential random variable, $v$ and the $A_i$ are defined in \eqref{eqn:parameterv0}.
\end{prop}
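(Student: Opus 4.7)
My plan is to prove the proposition by a Poisson-type decoupling argument. The two key ingredients are Lemma~\ref{lem:pickands}, which gives the per-unit-time rate at which each stationary Gaussian component $\tV^i$ crosses the high level $1/\sigma$, and Lemma~\ref{lem:nonstr.4.2concrete}, which says that events on disjoint time intervals of $({\bf \tV}_t)$ become independent at exponential rate in the gap size.

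Applying Lemma~\ref{lem:pickands} to $\tV^i$ with the local covariance expansion from Lemma~\ref{lem:7directv1} (exponent $\alpha=1$, coefficient $A_i$, variance $v^2$), and using $\mathcal{H}_1=1$ at the level $x=1/\sigma$, one gets
$$
  \P\Bigl(\max_{s\in[0,t]} \tV^i_s > 1/\sigma\Bigr) \sim \lambda_i(\sigma)\cdot t,
  \qquad \lambda_i(\sigma) := \frac{A_i}{\sqrt{2\pi}}\,\frac{1}{\sigma v}\,\exp\Bigl(-\frac{1}{2(\sigma v)^2}\Bigr),
$$
as long as $\lambda_i(\sigma)\,t\to 0$. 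Writing $\lambda_0(\sigma) := \sum_{i=1}^d \lambda_i(\sigma)$ and using $\sum_i A_i = 2\cdot d/(d-1) + (d-2)\cdot 2d/(d-1) = 2d$, the total rate matches the normalization in the statement for $\ttau$. I would then fix $T>0$, cover the window $[0, T/\lambda_0(\sigma)]$ by $N=N(\sigma)$ blocks of length $\ell=T/(N\lambda_0)$ separated by gaps of length $\theta=\theta(\sigma)$, and tune $N,\theta$ so that $N\cdot e^{-\mu\theta}\to 0$ (so the decoupling errors from Lemma~\ref{lem:nonstr.4.2concrete} are summable) while $N\lambda_0\theta\to 0$ (so the gaps miss exits with vanishing probability); e.g.\ $\theta\sim |\ln\sigma|^2$ with $N$ polynomial in $|\ln\sigma|$ works. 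Writing $q(\sigma)$ for the per-block probability that some $\tV^i$ exceeds $1/\sigma$, decoupling gives $\P(\ttau > T/\lambda_0) = (1-q)^N + o(1)$. A refined Bonferroni together with Pickands then shows $q = (1+o(1))\lambda_0\ell$, hence $\P(\ttau>T/\lambda_0)\to e^{-T}$; the identical scheme applied to a single component yields the exponential limit for $\ttau^i$ at rate $\lambda_i$.

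To establish the refined Bonferroni estimate and simultaneously identify the exit-position weights, one must show that the probability of two distinct components $\tV^i,\tV^j$ both exceeding $1/\sigma$ within a single block is of smaller order than $\lambda_0\ell$. By \eqref{eqn:transfertov3lemma7} and stationarity, the correlations between $\tV^i_s$ and $\tV^j_t$ (with $i\neq j$, $s,t$ in a block) stay bounded away from $1$ uniformly, so the joint Gaussian rate function at $(1/\sigma,1/\sigma)$ exceeds $1/v^2$ by a strictly positive constant; a $\sigma$-dependent discretization of each block together with a Gaussian tail union bound then makes the joint-exit probability on one block exponentially smaller than $\lambda_0(\sigma)\ell$. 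Consequently the per-block exit is carried by a unique component and $\P(\ttau=\ttau^i)\to \lambda_i/\lambda_0 = A_i/(2d)$, which equals $1/(d-1)$ for $i\in\{2,\ldots,d-1\}$ and $1/(2(d-1))$ for $i\in\{1,d\}$, giving \eqref{eqn:positions}.

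The main technical obstacle is the quantitative balancing in the block/gap construction: $\ell$ large enough for Pickands to apply uniformly on every block, $\theta$ large enough that $N\cdot e^{-\mu\theta}\to 0$, but $\ell+\theta$ small enough that the gaps contribute negligibly and that the $N$ blocks fit inside $[0,T/\lambda_0]$. A secondary, subtler difficulty is the sharp comparison of joint versus single exit probabilities described above, which is what yields the precise exit-position weights and cannot be obtained from a naive union bound alone.
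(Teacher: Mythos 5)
Your overall architecture coincides with the paper's: blocks separated by gaps, Pickands' lemma for the per-block exit rate, the mixing bound of Lemma~\ref{lem:nonstr.4.2concrete} for decoupling, and a Bonferroni step controlled by showing joint exits of two components are negligible. The gap is in your parameter choices, and it is not cosmetic: it breaks the one estimate you correctly identify as the crux. You take $N=\mathrm{poly}(|\ln\sigma|)$ blocks, so each block has length $\ell = T/(N\lambda_0)\asymp \sigma\,e^{(\sigma v)^{-2}/2}/\mathrm{poly}(|\ln\sigma|)$, i.e.\ super-exponentially long in $\sigma^{-1}$. Your proposed proof that the joint exit of $\tV^i,\tV^j$ on one block is $o(\lambda_0\ell)$ — a $\sigma$-dependent discretization of the block plus a Gaussian tail union bound — then fails: the number of discretization points in $[0,\ell]^2$ is of order $e^{(\sigma v)^{-2}}$ (up to polynomial factors), while each point contributes at best $\exp(-\eta(\sigma v)^{-2}/2)$ with $\eta=2/(1+\bar\rho)\in(1,2)$, so the union bound produces $e^{(2-\eta)(\sigma v)^{-2}/2}\to\infty$ rather than something $o(T/N)$. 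Equivalently, the metric entropy of your block is of the same order as the large-deviation rate itself, so no single "whole-block" Gaussian concentration bound can see the gain $\eta>1$. (The desired statement is still true for long blocks — the joint-exit probability is genuinely $\approx(\lambda_i\ell)(\lambda_j\ell)$ — but proving it requires decoupling \emph{within} the block, which is circular given your setup.)

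The paper resolves this by inverting your trade-off: it takes blocks of \emph{polynomial} length $L=\sigma^{-3}$, so that the entropy correction in the joint-exit bound is only $O(\sqrt{\ln L})=O(\sqrt{|\ln\sigma|})$, negligible against the level $\sigma^{-1}$, and Corollary~2 of \cite[Section~14]{L} gives $\P(E_1^{j_1}\cap E_1^{j_2})\le\exp(-\eta(\sigma v)^{-2}/2(1+o(1)))$ with $\eta>1$, which beats the single-exit probability $\asymp \sigma^{-4}e^{-(\sigma v)^{-2}/2}$ per block. The price is that the number of blocks $M\asymp T_\ast\sigma^{3}$ is super-exponential, so the gap length must be taken as $\ell=B\sigma^{-2}$ with $\amu B>v^{-2}$ to make $M^2e^{-\amu\ell}\to0$; your $\theta\sim|\ln\sigma|^2$ is then far too short. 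The two requirements — blocks short enough for the joint-exit estimate, gaps long enough to decouple super-exponentially many of them — cannot both be met with your choices, so the proof as proposed does not close. A secondary, minor point: to bound the joint-exit exponent you need $\sup_{s,t}\covcoef(\tV_s^i,\tV_t^j)<1$ uniformly, which does not follow from \eqref{eqn:transfertov3lemma7} alone (that gives only the value at $s=t$); one needs the continuity/decay argument the paper gives when verifying its condition (V3).
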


\begin{rem}\label{rem:sigmastar}
At some point we will need a minor extension of Proposition~$\ref{prop:veryslowpullingstat}$
allowing a very mild flexibility of the exit times. Namely, let $\sigma_{i*}$, $1\le i\le d$,
be so close to $\sigma$ that $\sigma_{i*}^{-2}=\sigma^{-2}+o(1)$. Let
\[
  \ttau_*^i:=\inf\left\{ t: \sigma_{i*} \tV_t^i \ge 1  \right\}
\]
and $\ttau_*:=\min_{1\le i\le d} \ttau_*^i$. Then we still have, as $\sigma\to 0$,
\begin{eqnarray*}
&& \P( \ttau_*=\ttau_*^i ) \to
                      \begin{cases}
                         \frac{1}{d-1} & i\in\{2,\ldots,d-1\};\\
                         \frac{1}{2(d-1)} & i\in\{1,d\}.
                      \end{cases}
\\
    && \ttau_*^i \cdot (\sigma v)^{-1} \exp( - (\sigma v)^{-2}/2 )
       \cdot \frac{A_i}{\sqrt{2\pi}}\tod \mathcal{E},
\\
   &&  \ttau_* \cdot (\sigma v)^{-1} \exp( - (\sigma v)^{-2}/2 )
       \cdot \frac{2d}{\sqrt{2\pi}}\tod \mathcal{E}.
\end{eqnarray*}
\end{rem}

Proposition~\ref{prop:veryslowpullingstat} will follow from the next lemma,
which does not rely on the explicit form of the processes $\tV_t^i$ but captures
their important features.

\begin{lem} \label{lem:veryslowpullingstatlemma}
Let $(\VVV_t^1,\ldots,\VVV_t^d)_{t\in\R}$ be a family of centered stationary
Gaussian process with the following properties:
\begin{enumerate}
\item[(V1)] The variance of all processes is the same:
\begin{equation}\label{eqn:localvar}
    \E[ \VVV_t^i ] = v^2>0,\qquad i=1,\ldots,d.
\end{equation}
\item[(V2)] The local covariance behavior is as follows: there is an $\alpha\in(0,2]$
such that for any $i$ there is $A_i>0$ with
\begin{equation}\label{eqn:localcov}
     \cov(\VVV_t^i,\VVV_0^i)=v^2 (  1 - A_i |t|^{\alpha} + o(|t|^\alpha) ),
     \qquad \text{as $t\to 0$}.
\end{equation}
\item[(V3)] There is a non-degeneracy among the components:
$$
    \sup_{s,t} \sup_{i,j\in\{1,\ldots,d \}, i\neq j} \covcoef ( \VVV_t^i,\VVV_s^j ) < 1. 
$$
\item[(V4)] There is a mixing property: i.e.\ there is a number $\mu>0$
such that
$$
   r(\theta) \leq e^{-\mu \theta},\qquad \theta>0,
$$
where
$$
     r(\theta) := \sup\left\{ \covcoef (W,W') :
     \genfrac {}{}{0pt}{}
      {W\in \span( \VVV_t^i, t\leq 0, i\in\{1,\ldots,d\})}
      { W'\in \span(\VVV_t^i, t\geq \theta, i\in\{1,\ldots,d\})}
      \right\}.
$$
\end{enumerate}
Then, for $\tau^i := \min\{ t>0 : \sigma \VVV_t^i \geq 1\}$ and $\tau:=\min_{i\in\{1,\ldots,d\}} \tau^i$
we have, as $\sigma\to 0$,
\begin{equation}\label{eqn:vspdistrib1}
     \P( \tau=\tau^i ) \to \frac{A_i^{1/\alpha}}{\sum_{j=1}^d A_j^{1/\alpha}},
\end{equation}
where $\alpha$ and the $A_i$ are as in \eqref{eqn:localcov}.

Further, as $\sigma\to 0$, we have the weak limit theorems
\begin{equation}\label{eqn:taulimitepszero}
     \tau^i \cdot (\sigma v)^{1-2/\alpha} \exp( - (\sigma v)^{-2}/2 )
     \cdot \frac{A_i^{1/\alpha} \mathcal{H}_\alpha }{\sqrt{2\pi}}\tod \mathcal{E},
\end{equation}
for $i\in\{1,\ldots,d\}$ and
\begin{equation}\label{eqn:taulimitepszero-taus}
   \tau \cdot (\sigma v)^{1-2/\alpha} \exp( - (\sigma v)^{-2}/2 )
   \cdot \frac{\sum_{j=1}^d A_j^{1/\alpha} \mathcal{H}_\alpha}{\sqrt{2\pi}} \tod \mathcal{E},
\end{equation}
where $\mathcal{E}$ is a standard exponential random variable and $\mathcal{H}_\alpha$ is Pickands constant (cf.\ Lemma~\ref{lem:pickands}).
\end{lem}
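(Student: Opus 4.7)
The plan follows the strategy outlined after Theorem~\ref{thm:veryslowpulling}: discretize time into well-separated blocks, apply Lemma~\ref{lem:pickands} on each block to identify the instantaneous exit rate, use Lemma~\ref{lem:nonstr.4.2} and (V4) to reduce a product of dependent block events to independent ones, and control the joint exceedance of two distinct components using (V3). Concretely, set $t_\sigma:=(\sigma v)^{2/\alpha-1}\exp((\sigma v)^{-2}/2)$, fix $s>0$, and choose auxiliary lengths $\theta\ll T\ll t_\sigma$ with $t_\sigma\,e^{-\mu\theta}\to 0$; since $t_\sigma=e^{\Theta(\sigma^{-2})}$, picking $\theta$ of order $\sigma^{-2}$ and $T$ a larger polynomial in $\sigma^{-1}$ works. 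Partition $[0,s\,t_\sigma]$ into $N\sim s\,t_\sigma/(T+\theta)$ disjoint blocks $I_k$ of length $T$ separated by gaps of length $\theta$, and write $E_i(I_k):=\{\max_{t\in I_k}\sigma\VVV_t^i\ge 1\}$.

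First I would apply Pickands to each $\VVV^i$ on $I_k$ at the level $x=1/\sigma$; the hypothesis $\limsup_{t\to\infty}\covcoef(\VVV_t^i,\VVV_0^i)<1$ required by Lemma~\ref{lem:pickands} follows from (V4), which forces the correlations to decay exponentially. This yields
\[
p_k^i:=\P(E_i(I_k))=(1+o(1))\,\frac{A_i^{1/\alpha}\mathcal{H}_\alpha}{\sqrt{2\pi}}\,T\,(\sigma v)^{1-2/\alpha}\exp(-(\sigma v)^{-2}/2),
\]
so that $Np_k^i\to sA_i^{1/\alpha}\mathcal{H}_\alpha/\sqrt{2\pi}$. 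Next, (V4) together with Lemma~\ref{lem:nonstr.4.2} gives $\mix(\theta)\le e^{-\mu\theta}$; iterating this decoupling across the $N$ blocks costs a total error $Ne^{-\mu\theta}=o(1)$, so
\[
\P\!\left(\bigcap_{k=1}^N\bigcap_{i=1}^d E_i(I_k)^c\right)=\prod_{k=1}^N\P\!\left(\bigcap_{i=1}^d E_i(I_k)^c\right)+o(1).
\]

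The technically hardest step is ruling out that two distinct components exceed the threshold inside the same block. By (V3) there exists $\rho_0<1$ with $\covcoef(\VVV_s^i,\VVV_t^j)\le\rho_0$ for all $s,t$ and $i\ne j$, and a Gaussian bound (ultimately a bivariate Piterbarg-type argument applied to $\VVV^i+\VVV^j$ on $I_k\times I_k$) gives
\[
\P(E_i(I_k)\cap E_j(I_k))=O\!\left(T^{2}\exp\!\left(-\frac{(\sigma v)^{-2}}{1+\rho_0}\right)\right)=o(p_k^i),
\]
since $1/(1+\rho_0)>1/2$ and $T$ is polynomial in $\sigma^{-1}$. A union bound inside each block therefore gives $\P(\bigcap_{i=1}^d E_i(I_k)^c)=1-\sum_i p_k^i+o(\max_i p_k^i)$, and multiplying over $k$
\[
\prod_{k=1}^N\!\left(1-\textstyle\sum_i p_k^i(1+o(1))\right)\longrightarrow \exp\!\left(-s\sum_{i=1}^d\frac{A_i^{1/\alpha}\mathcal{H}_\alpha}{\sqrt{2\pi}}\right).
\]
Checking that the exit probability in the total gap length $N\theta=o(t_\sigma)$ is negligible provides the matching upper bound, establishing \eqref{eqn:taulimitepszero-taus}; the single-index statement \eqref{eqn:taulimitepszero} is the same argument restricted to a single~$i$.

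For the exit position \eqref{eqn:vspdistrib1} I would rerun the block argument while tracking which component exits first in the block where $\tau$ lies: by the joint-exceedance bound only one $E_i(I_k)$ occurs at a time, so conditionally on $\tau\in I_k$ the probability of $\tau=\tau^i$ equals $p_k^i/\sum_j p_k^j+o(1)=A_i^{1/\alpha}/\sum_j A_j^{1/\alpha}+o(1)$, and summing over $k$ preserves this ratio. The principal obstacle is the joint-exceedance estimate: pushing the bivariate Piterbarg-type bound to a rate that is uniform in $T$ (which grows polynomially in $\sigma^{-1}$) is the delicate point, while everything else is bookkeeping of Pickands, mixing and union bounds.
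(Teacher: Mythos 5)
Your plan is correct and follows essentially the same route as the paper's proof: the same alternating long-block/short-gap partition on the scale $T_\ast=\sigma^{2/\alpha-1}e^{(\sigma v)^{-2}/2}$ (the paper takes gap $\ell=B\sigma^{-2}$ and block $L=\sigma^{-3}$), Pickands' lemma for the per-block exit rate, iterated decoupling via Lemma~\ref{lem:nonstr.4.2} with error $Me^{-\mu\ell}$, the joint-exceedance bound $\exp(-(\sigma v)^{-2}/(1+\bar\rho)(1+o(1)))$ obtained from the sum process $\VVV^i_s+\VVV^j_t$ on $[0,L]^2$ under (V3), and the exit-position law as the ratio of single-block exit probabilities (with only the upper bound needed since the limits sum to one). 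The one step you flag as delicate, uniformity of the bivariate bound in the polynomially growing $T$, is handled in the paper by a covering-number/Dudley-integral estimate that contributes only a $\sqrt{\ln L}$ shift of the level, which is absorbed into the $(1+o(1))$ in the exponent.
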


\begin{rem} A similar proof works if the covariance expansion at zero \eqref{eqn:localcov} is given
by $v^2(1-A_i |t|^{\alpha_i} +...)$ with some possibly distinct $\alpha_1,\ldots,\alpha_d\in(0,2]$.
The exit distribution is then concentrated on the positions $i$ where
$\alpha_i= \min_{j\in\{1,\ldots,d\}} \alpha_j$. The same applies if the variances of the different
components in \eqref{eqn:localvar} are different, say $=:v_i^2$. Then the exit distribution is
concentrated on those positions $i$ where $v_i^2=\max_{j\in\{1,\ldots,d\}} v_j^2$.
\end{rem}

\subsection{Proof of Lemma~\ref{lem:veryslowpullingstatlemma}}
{\it Step 1: Preliminaries.}

We start by remarking that in order to prove \eqref{eqn:vspdistrib1} it is sufficient to show
that, for all $i\in\{1,\ldots,d\}$,
\begin{equation}\label{eqn:vspdistrib1upper}
      \limsup_{\sigma\to 0} \P( \tau=\tau^i ) \leq \frac{A_i^{1/\alpha}}{\sum_{j=1}^d A_j^{1/\alpha}},
\end{equation}
as the right-hand side sums up to one.

Fix $i$ for the rest of this proof and set
$$
    T_\ast:=\sigma^{2/\alpha-1} \exp( (\sigma v)^{-2}/2 ),\qquad \ell:=B \sigma^{-2},\qquad
    L:=\sigma^{-3},\qquad M:=\frac{\mathcal{M} T_\ast}{L+\ell},
$$
where $B>0$ is a large constant chosen later and $\mathcal{M}>0$ is fixed (which we will send
to infinity later).

Consider the intervals
$$
    I_m:=[(L+\ell)(m-1),(L+\ell)(m-1)+L],\quad J_m:=[(L+\ell)(m-1)+L,(L+\ell)m],
$$
for $m\in\{1,2,\ldots,M\}$; and note that the disjoint union of these intervals is the
interval $[0,\mathcal{M} T_\ast]$.

For $m\in\{1,2,\ldots,M\}$, define the events
$$
    E_m^i := \{ \max_{t\in I_m} \VVV_t^i \geq \sigma^{-1} \},\qquad
    N_m:=\{ \max_{j\in\{1,\ldots,d\}} \max_{t\in I_m} \VVV_t^j < \sigma^{-1} \}.
$$

The event $E_m^i$ means that the $i$-th component produces an exit (in the sense of the statement of
the lemma) in the time interval $I_m$, while $N_m$ means that none of the components exits during the
time interval $I_m$. By the stationarity of the processes $(\VVV_t^i)_{t\in\R}$, $i\in\{1,\ldots,d\}$,
$$
    \P( E_1^i ) = \P( E_2^i ) = \ldots = \P( E_M^i ),\qquad \P( N_1 ) = \P( N_2 )
    = \ldots = \P( N_M ).
$$

The basic relation in order to prove \eqref{eqn:vspdistrib1upper} is the observation
\begin{equation}\label{eqn:thirtyone}
     \P( \tau=\tau^i ) \leq \P( \tau^i \geq \mathcal{M} T_\ast ) + \P( \tau^i \in \bigcup_{m=1}^M J_m )
     + \sum_{m=1}^M \P( \tau=\tau^i\in I_m).
\end{equation}

The first probability will tend to zero (first $\sigma\to 0$ then $\mathcal{M}\to\infty$), as an exit
far beyond the critical time scale $T_\ast$ is unlikely. The second probability will also tend to zero,
because the intervals $J_m$ are too short to produce an exit at all. The only contribution comes from the
last term.

{\it Step 2: The second term in \eqref{eqn:thirtyone} tends to zero.}

Observe that
$$
   \P( \tau^i \in \bigcup_{m=1}^M J_m ) \leq  \sum_{m=1}^M \P( \tau^i \in J_m )
   \leq  M \P( \max_{t\in J_1} \VVV_t^i \geq \sigma^{-1} ).
$$
Using stationarity again and the Pickands lemma (Lemma~\ref{lem:pickands}) with (V2),
we can estimate the last term as follows:
\begin{eqnarray*}
    &&M \P( \max_{t\in J_1} \VVV_t^i \geq \sigma^{-1} ) \notag
\\
    &=&  M \P( \max_{t\in [0,\ell]} \VVV_t^i \geq \sigma^{-1} ) \notag
\\
     &\leq &  M \cdot 2 \, \frac{A_i^{1/\alpha} \mathcal{H}_{\alpha}}{\sqrt{2\pi}}
              \cdot \ell \cdot (\sigma v)^{1-2/\alpha} \, \exp( -(\sigma v)^{-2}/2 ) \notag
\\
     &= & C_{\alpha,A_i,v} \frac{\mathcal{M} T_\ast}{L+\ell}\, \ell \cdot
     \sigma^{1-2/\alpha} \exp( -(\sigma v)^{-2}/2 ) \notag
\\
     &= & C_{\alpha,A_i,v,\mathcal{M}} \frac{\sigma^{2/\alpha-1}
     e^{(\sigma v)^{-2}/2}}{L+\ell}\, \ell \sigma^{1-2/\alpha}  \exp( -(\sigma v)^{-2}/2 )
     \notag
\\
     &= & C_{\alpha,A_i,v,\mathcal{M}} \frac{\ell}{L+\ell} \to 0, 
\end{eqnarray*}
as $\sigma\to 0$. We have thus seen that the second term in \eqref{eqn:thirtyone}
 tends to zero.

{\it Step 3: The first term in \eqref{eqn:thirtyone} tends to zero, as first $\sigma\to 0$
and then $\mathcal{M}\to\infty$.}

First note -- from Assumption (V2) -- that the Pickands lemma (Lemma~\ref{lem:pickands}) gives us
that
\begin{equation}
\label{eqn:pickands4.1}
     \P(E_1^i)=\P( \max_{ t\in[0, L]} \VVV^i_t \geq \sigma^{-1} ) \sim \frac{A_i^{1/\alpha}
     \mathcal{H}_{\alpha}}{\sqrt{2\pi}}\cdot L \cdot(\sigma v)^{1-2/\alpha}
     \cdot \exp( - (\sigma v)^{-2}/2 ),
\end{equation}
because the right-hand side indeed tends to zero, as the exponential term tends to zero
faster than the increasing polynomial term $L \sigma^{1-2/\alpha}$.

Further, we have to decouple the events $E_m$.
For this purpose, we use Assumption (V4), which makes Lemma~\ref{lem:nonstr.4.2} applicable:
Namely, by applying iteratively Lemma~\ref{lem:nonstr.4.2} using that
$E_m^i\in\sigma(\VVV_t^i, t\in I_m)$ and the fact that the intervals $I_m$ are separated
by a distance of at least $\ell$, we obtain
\begin{eqnarray}
    \P( \tau^i \geq \mathcal{M} T_\ast ) \notag
    &\leq & \P( \bigcap_{m=1}^M (E_m^i)^c ) \notag
\\
     &\leq & \P((E_1^i)^c) \cdot \P( \bigcap_{m=2}^M (E_m^i)^c ) + e^{-\amu\ell} \notag
\\
     &\leq & \P((E_1^i)^c) \cdot\P((E_2^i)^c)\cdot \P( \bigcap_{m=3}^M (E_m^i)^c )
     +\P((E_1^i)^c) \cdot e^{-\amu\ell} + e^{-\amu\ell} \notag
\\
     &\leq & \ldots \notag
\\
     &\leq & \prod_{m=1}^M \P((E_m^i)^c) + M e^{-\amu\ell} \notag
\\
     &= & \P((E_1^i)^c)^M + M e^{-\amu\ell} \notag
\\
     &= & (1-\P(E_1^i))^M + M e^{-\amu\ell} \notag
\\
     &\leq & \exp(-M\P(E_1^i)) + M e^{-\amu\ell}.  \label{eqn:copyforlimittheoremD}
\end{eqnarray}
We will show that both terms tend to zero, as first $\sigma\to 0$ and then
$\mathcal{M}\to\infty$.

Let us start with the second term. Here,
\begin{equation} \label{eqn:useforlowerboundtaus}
    M e^{-\amu\ell} = \frac{\mathcal{M} T_\ast}{L+\ell} \, e^{-\amu B \sigma^{-2} }
    = \frac{\mathcal{M} \sigma^{2/\alpha-1} \exp( (\sigma v)^{-2} /2 ) }{L+\ell} \,
    e^{-\amu B \sigma^{-2} },
\end{equation}
which tends to zero if $B$ is chosen sufficiently large ($\amu B > v^{-2}/2$; in fact,
later we will choose $B$ such that even $\amu B>v^{-2}$).

For the first term in \eqref{eqn:copyforlimittheoremD}, observe that by \eqref{eqn:pickands4.1}
 -- using the abbreviation
$c_{\alpha,A_i,v}:=A_i^{1/\alpha} \mathcal{H}_\alpha v^{1-2/\alpha} /\sqrt{2\pi}$ --,
we have that for $\sigma\to 0$
\begin{eqnarray}
   M\cdot \P(E_1^i) &\sim& \frac{\mathcal{M} T_\ast}{L+\ell}
   \cdot c_{\alpha,A_i,v} L \sigma^{1-2/\alpha} e^{-(\sigma v)^{-2}/2} \notag
\\
   &=& \frac{\mathcal{M} \sigma^{2/\alpha-1} e^{(\sigma v)^{-2}/2}}{L+\ell}
   \cdot c_{\alpha,A_i,v} L \sigma^{1-2/\alpha} e^{-(\sigma v)^{-2}/2} \notag
\\
   &\sim&  \mathcal{M} \cdot c_{\alpha,A_i,v}. \label{eqn:saveforlimt59}
\end{eqnarray}

We have thus seen that for any $\mathcal{M}>0$ we have
\begin{equation} \label{eqn:copyforlimittheorem}
      \limsup_{\sigma\to 0} \P( \tau^i \geq \mathcal{M} T_\ast )
      \leq e^{-\mathcal{M} c_{\alpha,A_i,v} }.
\end{equation}

Now, letting $\mathcal{M}\to \infty$ shows that the first term in \eqref{eqn:thirtyone}
tends to zero (first $\sigma\to 0$, then $\mathcal{M}\to\infty$).

{\it Step 4: The last term in \eqref{eqn:thirtyone} gives \eqref{eqn:vspdistrib1upper}.}

We shall use again Lemma~\ref{lem:nonstr.4.2}. Namely, observe that since the intervals
$I_k$, $k\in\{1,\ldots,m\}$ are separated by a distance of at least $\ell$, and that
$E_m^i\in\sigma(\VVV_t^i, t\in I_m)$ and similarly for the $N_m$. Therefore,
we obtain for $m\in\{1,\ldots,M\}$
\begin{eqnarray*}
    \P( \tau=\tau^i \in I_m)
    &\leq & \P( E_m^i \cap \bigcap_{k=1}^{m-1} N_k)
\\
    &\leq & \P( E_m^i) \cdot \P(\bigcap_{k=1}^{m-1} N_k) + e^{-\amu\ell}
\\
    &\leq &  \ldots
\\
    &\leq & \P( E_m^i) \cdot \prod_{k=1}^{m-1} \P(N_k) + m e^{-\amu\ell}
\\
    &= & \P( E_1^i) \cdot \P(N_1)^{m-1} + m e^{-\amu\ell}
\\
    &\leq & \P( E_1^i) \cdot \P(N_1)^{m-1} + M e^{-\amu\ell}.
\end{eqnarray*}

This yields
\begin{eqnarray}
     \sum_{m=1}^M \P( \tau=\tau^i \in I_m)
     &\leq&  \P( E_1^i) \cdot \sum_{m=1}^M \P(N_1)^{m-1} + M^2 e^{-\amu\ell} \notag
\\
     &\leq&  \P( E_1^i) \cdot\frac{1}{1-\P(N_1)} + M^2 e^{-\amu\ell}.
     \label{eqn:maintermseq6.3cont}
\end{eqnarray}

The last term is easily seen to tend to zero:
$$
    M^2 e^{-\amu\ell} = \frac{\mathcal{M}^2 T_\ast^2}{(L+\ell)^2} \, e^{-\amu B \sigma^{-2} }
    = \frac{\mathcal{M}^2 \sigma^{4/\alpha-2} \exp( 2(\sigma v)^{-2} /2 ) }
      {(L+\ell)^2} \, e^{-\amu B \sigma^{-2} },
$$
which tends to zero as $\sigma\to 0$, if we choose $B$ is sufficiently large ($\amu B>v^{-2}$).

In order to treat the first term in \eqref{eqn:maintermseq6.3cont}, first observe that
\begin{equation} \label{eqn:maintt7.1}
    1-\P( N_1) = \P(N_1^c) = \P( \bigcup_{j=1}^d E_1^j ) \geq  \sum_{j=1}^d \P( E_1^j )
    - \sum_{j_1,j_2\in\{1,\ldots,d\}, j_1\neq j_2} \P( E_1^{j_1}\cap  E_1^{j_2}),
\end{equation}
where the inequality follows from the inclusion-exclusion principle (also called Bonferroni's inequality).

Using again the Pickands lemma (Lemma~\ref{lem:pickands}), we have
\begin{equation} \label{eqn:maintt7.2}
        \P(E_1^j) \sim \frac{A_j^{1/\alpha} \mathcal{H}_{\alpha}}{\sqrt{2\pi}}
        \cdot L \cdot (\sigma v)^{1-2/\alpha} \, \exp( -(\sigma v)^{-2}/2 ),\qquad
        j\in\{1,\ldots,d\}.
\end{equation}

We shall further prove that for $j_1\neq j_2$, $j_1,j_2\in\{1,\ldots,d\}$,
\begin{equation} \label{eqn:admit7.3}
       \P(E_1^{j_1}\cap E_1^{j_2}) \leq \exp( - \eta (\sigma v)^{-2}/2 (1+o(1)) ),
\end{equation}
as $\sigma\to 0$, where $\eta>1$ is some constant. Inserting this into \eqref{eqn:maintt7.1}
and using \eqref{eqn:maintt7.2}, we obtain
\begin{equation} \label{eqn:maintt7.1-finish}
    1 - \P(N_1) \geq \sum_{j=1}^d \P( E_1^j ) (1+o(1)),
\end{equation}
because the leading order of $\sum_{j=1}^d \P( E_1^j )$ is given by
$\exp(-(\sigma v)^{-2}/2 (1+o(1)))$, by \eqref{eqn:maintt7.2}, while  the second term
in \eqref{eqn:maintt7.1} is of order $\exp(-\eta(\sigma v)^{-2}/2(1+o(1)))$
due to \eqref{eqn:admit7.3}, and thus of lower order as $\eta>1$.

Putting this together with \eqref{eqn:maintermseq6.3cont} shows that, as $\sigma\to 0$,
\begin{eqnarray*}
    \sum_{m=1}^M \P( \tau=\tau^i \in I_m)
    &\leq&
    \P( E_1^i) \cdot \frac{1}{1-\P(N_1)} + o(1)
\\
    &=& \frac{\frac{A_i^{1/\alpha} \mathcal{H}_{\alpha}}{\sqrt{2\pi}} \cdot L \cdot
        (\sigma v)^{1-2/\alpha} \, \exp( -(\sigma v)^{-2}/2 )  }
        { \sum_{j=1}^d \frac{A_j^{1/\alpha} \mathcal{H}_{\alpha}}{\sqrt{2\pi}} \cdot
        L \cdot (\sigma v)^{1-2/\alpha} \, \exp( -(\sigma v)^{-2}/2 ) } + o(1)
\\
    &=& \frac{A_i^{1/\alpha}}{ \sum_{j=1}^d A_j^{1/\alpha}} + o(1),
\end{eqnarray*}
as required by \eqref{eqn:vspdistrib1upper}. It remains to prove \eqref{eqn:admit7.3}.

{\it Step 5: Proof of \eqref{eqn:admit7.3}.}

As a comment, note that \eqref{eqn:admit7.3} means that the probability of ``joint exits''
of different components in one of the $I_m$ intervals is of lower order
(compared to the exit of only one component).

To simplify the notation, let $j_1=1$, $j_2=2$. By definition,
$$
     E_1^1\cap E_1^2 = \left\lbrace \max_{s\in[0,L]} \VVV^1_s \geq \sigma^{-1},
     \max_{t\in[0,L]} \VVV_t^2 \geq \sigma^{-1}\right\rbrace.
$$
Obviously,
\[
      E_1^1\cap E_1^2 \subseteq   \left\lbrace \max_{s,t\in[0,L]}
         (\VVV_s^1 +  \VVV_t^2) \geq 2\sigma^{-1}\right\rbrace
         = \left\lbrace \max_{s,t\in[0,L]}  \Lambda(s,t)
           \geq 2 \sigma^{-1}\right\rbrace,
\]
where $\Lambda(s,t):=\VVV_s^1 + \VVV_t^2$ is a centered
Gaussian process with index set $\mathcal{T}:=[0,L]^2$.

According to the general theory \cite{L}, the evaluation of the large values' probability
for $\Lambda$ requires a bound for its largest variance and the estimation of the
corresponding {\it  covering numbers}.
By Assumption (V3), for all $s,t\in \R$ we have the variance bound
$$ 
     \E \Lambda(s,t)^2 =  2v^2 (1+ \E[\VVV_s^1 \VVV_t^2]) \le 2v^2(1+ \bar\rho)
$$ 
with some $\bar\rho<1$. Next, recall that for a Gaussian process $(X_t, t\in T)$
the covering number $\NN(X,T, \eps)$ is the minimal number of balls needed to cover
the set $T$ with respect to {\it the Dudley semi-metric}
$$
    \rho_X(s,t)^2:= \E[ (X_s-X_t)^2], \qquad s,t\in T.
$$
We refer to Section~14 of \cite{L} for other relevant definitions.

Due to the property (V2) we have
\[
  \rho_{\VVV^i}(t,t+h)^2 = 2v^2A_i |h|^\alpha (1+o(1)), \qquad \textrm{as } h\to 0.
\]
It follows that
\[
  \NN(\VVV^i,[0,L],\eps) \le c_1 L \eps^{-2/\alpha}, \qquad \eps>0, L\ge 1.
\]
Hence,
\[
  \NN(\Lambda,\mathcal{T}, 2\eps) \le c_1^2 L^2 \eps^{-2/\alpha}, \qquad \eps>0, L\ge 1.
\]
Furthermore, for {\it the Dudley integral} we have the estimate
\[
	\int_0^{2v} \sqrt{\ln \NN(\Lambda,\mathcal{T},\eps)} \ d\eps
   \le c_2 \sqrt{\ln L}, \qquad L\ge 2.
\]
We can finally apply Corollary~2 from  \cite[Section~14, p.181] {L}, which
involves the Dudley integral and shows in our case that (with $\bar \Phi$
the tail of the standard normal distribution) for small enough $\sigma$
\begin{eqnarray*}
    && \P\left(  \max_{(s,t) \in \mathcal{T}}
       \Lambda(s,t) \geq 2 \sigma^{-1}\right)
\\
    &\leq& \bar \Phi( (2\sigma^{-1} - c_3 \sqrt{\ln L} ) / (v\sqrt{2(1+\bar \rho)}) )
\\
    &\leq& \exp(- \frac{\sigma^{-2}}{v^2 (1+\bar \rho)} (1+o(1)) )
\\
    &=& \exp( - \eta (\sigma v)^{-2}/2 (1+o(1)) ),
\end{eqnarray*}
with $\eta:=2/(1+\bar \rho)>1$ as required in \eqref{eqn:admit7.3}.

{\it Step 6: We prove the second assertion of the lemma, i.e.\ the limit theorem for $\tau^i$,
i.e.\ $\eqref{eqn:taulimitepszero}$.}

We recall that we obtained in \eqref{eqn:copyforlimittheorem} that for any $\mathcal{M}>0$
$$
    \limsup_{\sigma\to 0} \P( \tau^i \geq \mathcal{M} T_\ast )
    \leq \exp(- \mathcal{M} c_{\alpha,A_i,v} ),
$$
where $c_{\alpha,A_i,v}=A_i^{1/\alpha} \mathcal{H}_\alpha v^{1-2/\alpha}/\sqrt{2\pi}$.
Using the specific choice $\mathcal{M}:=t c_{\alpha,A_i,v}^{-1}$, we obtain that, for any $t>0$,
$$
    \limsup_{\sigma\to 0} \P( \tau^i \geq t c_{\alpha,A_i,v}^{-1} T_\ast )
    \leq \exp(- t ).
$$
This already shows one of the estimates needed for the limit theorem \eqref{eqn:taulimitepszero}.

We now proceed with proving the lower bound in \eqref{eqn:taulimitepszero}. For this purpose,
let $\mathcal{M}>0$ and set $M$ (and all other variables) as in Step 1. Then
\begin{eqnarray*}
    \P( \tau^i \geq \mathcal{M} T_\ast )
    &\geq & \P( \bigcap_{m=1}^M (E_m^i)^c \cap \{ \tau^i \not\in \bigcup_{m=1}^M J_m\})
\\
    &\geq & \P( \bigcap_{m=1}^M (E_m^i)^c) - \P(\tau^i \in \bigcup_{m=1}^M J_m)
\\
    &\geq & \prod_{m=1}^M \P( (E_m^i)^c)  -  M e^{-\amu\ell} - \P( \tau^i \in \bigcup_{m=1}^M J_m ) ,
\\
     &= & (1- \P( E_1^i))^M  -  M e^{-\amu\ell}-\P( \tau^i \in \bigcup_{m=1}^M J_m ),
\end{eqnarray*}
by the decoupling argument from Lemma~\ref{lem:nonstr.4.2} and using stationarity.
The term $Me^{-\amu\ell}$ is seen to tend to zero with $\sigma\to 0$,
as in \eqref{eqn:useforlowerboundtaus}, by the choice of the constant $B$. Further,
$\P( \tau^i \in \bigcup_{m=1}^M J_m )$ tends to zero, as observed in Step 2.

Since $\P(E_1^i)\to 0$ as $\sigma\to 0$, we obtain
$$
   \P( \tau^i \geq \mathcal{M} T_\ast ) \geq e^{- M\P( E_1^i) (1+o(1))} - o(1).
$$
We further know from \eqref{eqn:saveforlimt59} that
$M\P(E_1^i) \sim \mathcal{M} \cdot c_{\alpha,A_i,v}$, which implies
$$
    \liminf_{\sigma\to 0}\P( \tau^i \geq \mathcal{M} T_\ast )
    =\exp(- \mathcal{M} \cdot c_{\alpha,A_i,v}).
$$
Using the specific choice $\mathcal{M}:=t c_{\alpha,A_i,v}^{-1}$, we obtain that,
for any $t>0$,
$$
    \liminf_{\sigma\to 0} \P( \tau^i \geq t c_{\alpha,A_i,v}^{-1} T_\ast )
    \geq \exp(- t ),
$$
which completes the proof of \eqref{eqn:taulimitepszero}.

{\it Step 7: We prove the last assertion, i.e.\ the limit theorem for $\tau$,
i.e.\ $\eqref{eqn:taulimitepszero-taus}$.}

The proof is very similar to the bounds for the $\tau^i$. First,
\begin{eqnarray*}
\P( \tau \geq \mathcal{M} T_\ast )
&\leq & \P( \bigcap_{m=1}^M \bigcap_{j=1}^d (E_m^j)^c )
\\
&\leq & \prod_{m=1}^M \P( \bigcap_{j=1}^d (E_m^j)^c)  +  M e^{-\amu\ell}
\\
&= & (1- \P( \bigcup_{j=1}^d E_1^j))^M + M e^{-\amu\ell}
\\
&\leq & \exp(- M\P(\bigcup_{j=1}^d E_1^j)) + M e^{-\amu\ell}.
\end{eqnarray*}
Again the second term is of lower order (cf.\ \eqref{eqn:useforlowerboundtaus}),
while for the first -- by the argument
in \eqref{eqn:maintt7.1}-\eqref{eqn:maintt7.1-finish} -- one can see that
$$
    \P(\bigcup_{j=1}^d E_1^j) = \sum_{j=1}^d \P( E_1^j ) (1+o(1)).
$$

This shows (using \eqref{eqn:maintt7.1} and the definition of $T_\ast$)
$$
    \limsup_{\sigma\to 0} \P( \tau \geq \mathcal{M} T_\ast )
    \leq \exp( - \mathcal{M} \sum_{j=1}^d c_{\alpha,A_j,v} ).
$$
Using the specific choice $\mathcal{M}:= t (\sum_{j=1}^d c_{\alpha,A_j,v})^{-1}$
shows the upper bound in  \eqref{eqn:taulimitepszero-taus}.

For the lower bound, one can also proceed analogously:
\begin{eqnarray*}
    && \P( \tau \geq \mathcal{M} T_\ast )
\\
    &\geq & \P( \bigcap_{m=1}^M \bigcap_{j=1}^d (E_m^j)^c \cap
    \{ \forall j : \tau^j \not\in \bigcup_{m=1}^M J_m\})
\\
    &\geq & \P( \bigcap_{m=1}^M \bigcap_{j=1}^d (E_m^j)^c )
            - \P( \exists j : \tau^j \in \bigcup_{m=1}^M J_m)
\\
    &\geq & \prod_{m=1}^M \P( \bigcap_{j=1}^d (E_m^j)^c)
    -  M e^{-\amu\ell}-\sum_{j=1}^d \P( \tau^j \in \bigcup_{m=1}^M J_m)
\\
    &= & (1- \P( \bigcup_{j=1}^d E_1^j))^M - M e^{-\amu\ell}
        -\sum_{j=1}^d \P( \tau^j \in \bigcup_{m=1}^M J_m)
\\
    &\geq & \exp(- M\P(\bigcup_{j=1}^d E_1^j) (1+o(1)) )
         - M e^{-\amu\ell}-\sum_{j=1}^d \P( \tau^j \in \bigcup_{m=1}^M J_m).
\end{eqnarray*}
Again the second and third term vanish (cf.\ \eqref{eqn:useforlowerboundtaus} and Step 2).
For the first expression, we can obtain
$$
    \P(\bigcup_{j=1}^d E_1^j) = \P( \max_{j\in\{1,\ldots,d\} }
    \max_{t\in[0,L]} \VVV_t^j \geq \sigma^{-1})
    \leq \sum_{j=1}^d \P( \max_{t\in[0,L]} \VVV_t^j \geq \sigma^{-1}),
$$
and we apply the Pickands lemma (see \eqref{eqn:maintt7.2}) to this giving
$$
    \liminf_{\sigma\to 0} \P( \tau \geq \mathcal{M} T_\ast ) \geq \exp( - \mathcal{M}
    \sum_{j=1}^d c_{\alpha,A_j,v} ).
$$
Again, using the specific choice $\mathcal{M}:= t (\sum_{j=1}^d c_{\alpha,A_j,v})^{-1}$
shows the lower bound in \eqref{eqn:taulimitepszero-taus}.

\subsection{Proof of Proposition~\ref{prop:veryslowpullingstat}}
In this section, we prove Proposition~\ref{prop:veryslowpullingstat}.
The goal is to apply Lemma~\ref{lem:veryslowpullingstatlemma} with $\alpha=1$ and, accordingly,
$\mathcal{H}_\alpha=1$. We only have to show that the processes $(\tV^i_t)$ defined
in $\eqref{eqn:Vti}$ satisfy the assumptions (V1)-(V4) of Lemma~\ref{lem:veryslowpullingstatlemma}.

Note that (V1) and (V2) follow directly from relation \eqref{eqn:lemma7directv1} in
Lemma~\ref{lem:7directv1}, where $\alpha=1$ and $v$ and the $A_i$ are as
in the statement of the proposition.

Let us prove (V3). First note that it is sufficient to consider $s=0$ and $t\geq 0$,
by stationarity. Fix $i\neq j$. Note that
$$
    | \E[ \tV_t^i \tV_0^j] | \leq \sqrt{ \E[ (\tV_t^i)^2]\cdot \E[ (\tV_0^j)^2] } = v^2,
$$
with equality if and only if the two random variables are linearly dependent. In the latter case,
we would have a.s.\ $\tV_t^i = c \tV_0^j$, which has probability zero (for any $t>0$).
Therefore, $\E[ \tV_t^i \tV_0^j] < v^2$ for all $t>0$.

At $t=0$, by \eqref{eqn:transfertov3lemma7} in Lemma~\ref{lem:7directv1},
we have $\E[ \tV_0^i \tV_0^j]=-1/(2d)$, so that for small $t$ we can bound
$\E[ \tV_t^i \tV_0^j]$ away from $0$, by continuity. Further,
$\E[ \tV_t^i \tV_0^j] = \sum_{k=1}^d a_k^{i,j} e^{\lambda_k t}$ is a sum of
exponential terms, because each $\tV^i$ is a linear combination of independent
Ornstein--Uhlenbeck processes, so that, for large $t$ it
tends to zero. Summarizing, the function $t\mapsto \E[ \tV_t^i \tV_0^j]$ is continuous,
negative for small $t$, never reaches $v^2$, and tends to zero for $t\to\infty$.
Therefore, we can bound it
$$
    \E[ \tV_t^i \tV_0^j] \leq \bar\rho_{i,j} v^2
$$
with $\bar\rho_{i,j}<1$. Taking $\bar\rho:=\max_{i\neq j} \bar \rho_{i,j}< 1$
then satisfies (V3).

The fact that (V4) holds for $(\tV_t^i)_{1\le i\le d}$ with $\mu:=\min_j |\lambda_j|$ follows from
\eqref{eqn:covarianceU}, as we noticed in the proof of Lemma~\ref{lem:nonstr.4.2concrete}.

\subsection{Proof of Theorem~\ref{thm:veryslowpulling}}

We shall prove Theorem~\ref{thm:veryslowpulling} by reducing it to the stationary case considered in
Proposition \ref{prop:veryslowpullingstat}.

\subsubsection{Limit theorem for $\tau^i$}

First of all notice that the slow pulling condition \eqref{eqn:condveryslowpulling} implies
\be \label{eqn:epsreallysmall}
  \eps \ll \exp(-K\sigma^{-2}) \qquad \forall K>0.
\ee
We will only need this (slightly weaker) condition.

\paragraph{Upper bound for the exit time.}
Recall that the sufficient condition for a break in the chain element $i$ at time $t$ is
given in \eqref{eqn:exitsufficientshort}; it follows that the simpler condition
\[
   \sigma \tV_t^i\ge 1 +\eps K^i + \sigma \Xi e^{-\amu t}
\]
is also sufficient for the break.

Let us denote by
\[
  \theta_\sigma^i:=\sigma v \exp((\sigma v)^{-2}/2) \frac{\sqrt{2\pi}}{A_i}
\]
the norming factor from our limit theorem. Then for all $r,\delta,F>0$ we have
\begin{eqnarray*}
  \P(\tau^i\ge r \theta_\sigma^i ) &\leq&
  \P\left( \sigma \tV_t^i < 1 +\eps K^i + \sigma \Xi e^{-\amu t},
  0 \le t \le r \theta_\sigma^i\right)
\\
   &\leq&
  \P\left(\sigma \tV_t^i < 1 +\eps K^i + \sigma F e^{-\amu t},
  0 \le t \le r \theta_\sigma^i\right) +\P(\Xi>F)
\\
   &\leq&
  \P\left(\sigma \tV_t^i < 1 +\eps K^i + \sigma F e^{-\amu \delta \theta_\sigma^i},
      \delta\theta_\sigma^i \le t \le  r\theta_\sigma^i \right) +\P(\Xi>F)
\\
  &=&
  \P\left( \sigma_{i*} \tV_t^i < 1, \  \delta \theta_\sigma^i \le t \le r\theta_\sigma^i \right)
  + \P(\Xi>F),
\end{eqnarray*}
where
$\sigma_{i*}= \sigma ( 1 +\eps K^i + \sigma F e^{-\amu \delta \theta_\sigma^i})^{-1}$.
Using \eqref{eqn:epsreallysmall}, notice that
$\eps K^i + \sigma F e^{-\amu \delta \theta_\sigma^i}\ll \sigma^2$, hence
$\sigma_{i*}^{-2}=\sigma^{-2}+o(1)$, thus providing
$\theta_{\sigma_{i*}}^i \sim \theta_\sigma^i$, and we obtain eventually
\begin{eqnarray*}
   \P(\tau^i\ge \theta_\sigma^i r) &\leq&
   \P\left(\sigma_{i*} \tV_t^i < 1, \
      2\delta\theta_{\sigma_{i*}}^i \le t \le (r-\delta)\theta_{\sigma_{i*}}^i\right)
      + \P(\Xi>F)
 \\
 &\leq&
   \P\left(\sigma_{i*} \tV_t^i < 1,\ 0\le t \le (r-\delta)\theta_{\sigma_{i*}}^i\right)
\\
     &&     + \P\left(\exists t\in [0, 2\delta \theta_{\sigma_{i*}}^i] : \sigma_{i*} \tV_t^i\ge 1 \right)
     + \P(\Xi>F).
\end{eqnarray*}
By applying Proposition \ref{prop:veryslowpullingstat} with $\sigma_{i*}$
in place of $\sigma$ we have that  under \eqref{eqn:epsreallysmall}
\[
   \limsup_{\eps\to 0,\sigma\to 0}  \P(\tau^i \ge r \theta_\sigma^i)
   \leq  e^{-(r-\delta)} +[1- e^{-2\delta}] + \P(\Xi>F).
\]
Finally, by letting $\delta\to 0$, $F\to+\infty$ we obtain under \eqref{eqn:epsreallysmall}
\[
   \limsup_{\eps\to 0,\sigma\to 0}  \P(\tau^i\ge r \theta_\sigma^i) \leq e^{-r},
\]
as desired.

\paragraph{Lower bound for the exit time.}
Recall that the {\it necessary} condition for the break is given in \eqref{eqn:exitnecessaryshort}.
Assuming as before that $\Xi\le F$, we obtain the necessary condition
\[
   \sigma \tV_t^i\ge 1 - \frac{\eps t}{d}- \eps K^i - \sigma F e^{-\amu t}.
\]
We handle this condition differently for relatively small $t$ and for larger $t$.
Namely, for $t\in[0, 3|\ln\sigma|/\amu]$ we use that (under \eqref{eqn:epsreallysmall})
\[
    \sup_{t\in[0,3|\ln\sigma|/\amu]}
        \left[ \frac{\eps t}{d} +\eps K^i + \sigma F e^{-\amu t}\right]
 \le
   \frac{\eps}{d} \cdot  \frac{3|\ln\sigma|}{\amu}  +\eps K^i + \sigma F \to 0.
\]
Therefore, for $t\in[0,3|\ln\sigma|/\amu]$ we obtain the necessary condition
\[
   \sigma \tV_t^i\ge \frac{1}{2}.
\]
On the other hand, for $t\in [3|\ln\sigma|/\amu, r \theta_\sigma^i]$ we obtain
the necessary condition
\[
   \sigma \tV_t^i\ge 1 - \frac{\eps r \theta_\sigma^i }{d}- \eps K^i -  F\sigma^{4},
\]
which can be rewritten as  $\sigma_{i*} \tV_t^i\ge 1$, where
$\sigma_{i*}:=\sigma\left( 1 - \frac{\eps r \theta_\sigma^i}{d}- \eps K^i - F\sigma^{4}\right)^{-1}$
again satisfies the requirement
$\sigma_{i*}^{-2}=\sigma^{-2}+o(1)$ (under \eqref{eqn:epsreallysmall}), hence,
$\theta_{\sigma_{i*}}^i \sim \theta_\sigma^i$.
We obtain
\begin{eqnarray*}
  && \P(\tau^i\ge r \theta_\sigma^i)
\\
  &\geq&
  \P(\sigma_{i*} \tV_t^i < 1, 0\le t\le r \theta_\sigma^i ) - \P(\Xi>F)
  - \P\left( \sigma \sup_{t\in[0,3|\ln\sigma|/\amu]} \tV_t^i\ge \frac{1}{2} \right)
\\
&\geq&
\P(\sigma_{i*} \tV_t^i < 1, 0\le t\le (1+\delta) r \theta_{\sigma_{i*}}^i) - \P(\Xi>F)
\\
 &&  - \P\left( \sigma \sup_{t\in[0,3|\ln\sigma|/\amu]} \tV_t^i\ge \frac{1}{2} \right).
\end{eqnarray*}
By stationarity and Gaussianity of $\tV^i$ we have
\begin{equation} \label{eqn:gaussiansupremum}
  \lim_{\sigma\to 0}  \P\left( \sigma \sup_{t\in[0,3|\ln\sigma|/\amu]} \tV_t^i\ge \frac{1}{2} \right)
  = 0.
\end{equation}
Indeed, since the process $\tV^i$ is bounded, by \cite[Theorem 1, Section 12]{L} for every $h>0$
and all $\sigma$ small enough one has
\[
  \P\left( \sigma \sup_{t\in[0,1]} \tV_t^i \ge \frac{1}{2} \right)
  \le \exp\left(- \frac{(\sigma v)^{-2}}{8(1+h)}\right).
\]
Hence, using stationarity, for every $L>0$ we see that
\[
  \P\left( \sigma \sup_{t\in[0,L]} \tV_t^i \ge \frac{1}{2} \right)
  \le (L+1) \exp\left(- \frac{(\sigma v)^{-2}}{8(1+h)} \right).
\]
Applying this with $L= 3|\ln\sigma|/\amu$ we obtain \eqref{eqn:gaussiansupremum}.

By using \eqref{eqn:gaussiansupremum} and applying Proposition \ref{prop:veryslowpullingstat} with $\sigma_{i*}$
in place of $\sigma$ we have under \eqref{eqn:epsreallysmall}
\[
   \liminf_{\eps\to 0,\sigma\to 0}  \P(\tau^i\ge r \theta_\sigma^i) \geq e^{-(1+\delta)r}  - \P(\Xi>F).
\]
Finally, by letting $\delta\to 0$, $F\to+\infty$ we obtain that  under \eqref{eqn:epsreallysmall}
\[
   \liminf_{\eps\to 0,\sigma\to 0} \P(\tau^i\ge r \theta_\sigma^i) \geq e^{-r},
\]
as desired.

\subsubsection{Limit theorem for $\tau$}

Let us denote by
\[
  \theta_\sigma:=\sigma v \exp((\sigma v)^{-2}/2) \frac{\sqrt{2\pi}}{2d}
\]
the norming factor from our limit theorem.

\paragraph{Upper bound for the total exit time.}
Using the same argument as in the previous upper bound we obtain
for all $r,\delta,F>0$
\begin{eqnarray*}
  \P(\tau\ge r \theta_\sigma ) &=&  \P(\tau^i \ge r \theta_\sigma, 1\le i\le d )
\\
  &\leq&
  \P\left( \sigma \tV_t^i < 1 +\eps K^i + \sigma \Xi e^{-\amu t},
  0 \le t \le r \theta_\sigma, 1\le i\le d \right)
\\
   &\leq&
  \P\left(\sigma \tV_t^i < 1 +\eps K^i + \sigma F e^{-\amu t},
  0 \le t \le r \theta_\sigma, 1\le i\le d \right)
\\
   &&   +\P(\Xi>F)
\\
   &\leq&
  \P\left(\sigma \tV_t^i < 1 +\eps K^i + \sigma F e^{-\amu \delta \theta_\sigma},
      \delta\theta_\sigma \le t \le  r\theta_\sigma, 1\le i\le d\right)
\\
   &&   +\P(\Xi>F)
\\
  &\leq&
  \P\left( \sigma_{*} \tV_t^i < 1, \  \delta\theta_\sigma \le t \le r\theta_\sigma, 1\le i\le d \right)
  + \P(\Xi>F),
\end{eqnarray*}
where
$\sigma_{*}= \sigma ( 1 +\eps  \max_{1\le i\le d}K^i + \sigma F e^{-\amu \delta \theta_\sigma})^{-1}$.
Using \eqref{eqn:epsreallysmall}, we obtain again
$\sigma_{*}^{-2}=\sigma^{-2}+o(1)$, providing $\theta_{\sigma_{*}} \sim \theta_\sigma$, and we obtain eventually
\begin{eqnarray*}
   \P(\tau\ge  r \theta_\sigma) &\leq&
   \P\left(\sigma_{*} \tV_t^i < 1, \
      2\delta\theta_{\sigma_{*}} \le t \le (r-\delta)\theta_{\sigma_{*}}, 1\le i\le d  \right)
      + \P(\Xi>F)
 \\
 &\leq&
   \P\left(\sigma_{*} \tV_t^i < 1,\ 0\le t \le (r-\delta)\theta_{\sigma_{*}}, 1\le i\le d \right)
\\
     &&     + \P\left(\exists t\in [0, 2\delta \theta_{\sigma_{*}} \sigma_{*}],
     \exists i\in \{1,\dots,d\}  : \sigma_{*}\tV_t^i\ge 1 \right)   + \P(\Xi>F).
\end{eqnarray*}
By applying Proposition \ref{prop:veryslowpullingstat} with $\sigma_{*}$
in place of $\sigma$ we have
\[
   \limsup_{\eps\to 0,\sigma\to 0}  \P(\tau \ge r \theta_\sigma)
   \leq  e^{-(r-\delta)} +[1- e^{-2\delta}] + \P(\Xi>F).
\]
Finally, by letting $\delta\to 0$, $F\to+\infty$ we obtain that  under \eqref{eqn:epsreallysmall}
\[
   \limsup_{\eps\to 0,\sigma\to 0}  \P(\tau\ge r \theta_\sigma) \leq e^{-r},
\]
as desired.

\paragraph{Lower bound for the total exit time.}
Using the same argument as in the previous lower bound we obtain
for all $r,\delta,F>0$
\begin{eqnarray*}
    \P(\tau\ge r \theta_\sigma )
	&=&  \P(\tau^i\ge r \theta_\sigma, 1\le i\le d)
\\
  &\geq&   \P(\sigma_{i*} \tV_t^i < 1, 0\le t\le r \theta_\sigma, 1\le i\le d ) - \P(\Xi>F)
\\
  &&  - \P\left( \sigma \max_{1\le i\le d} \sup_{t\in[0,3|\ln\sigma|/\amu]} \tV_t^i\ge \frac{1}{2} \right)
\\
  &\geq&
  \P( \sigma_* \tV_t^i < 1, 0\le t\le (1+\delta) r \theta_{\sigma_{*}}, 1\le i\le d) - \P(\Xi>F)
\\
 &&  - \P\left( \sigma \max_{1\le i\le d} \sup_{t\in[0,3|\ln\sigma|/\amu]} \tV_t^i\ge \frac{1}{2} \right)
\end{eqnarray*}
where $ \sigma_*:=\max_{1\le i\le d} \sigma_{i*}$.
Again we have
\begin{eqnarray*}
  && \lim_{\sigma\to 0}  \P\left( \sigma  \max_{1\le i\le d} \sup_{t\in[0,3|\ln\sigma|/\amu]} \tV_t^i\ge \frac{1}{2} \right)
\\
  &\le& \sum_{i=1}^{d}  \lim_{\sigma\to 0}  \P\left( \sigma \sup_{t\in[0,3|\ln\sigma|/\amu]} \tV_t^i\ge \frac{1}{2} \right)
  = 0.
\end{eqnarray*}
By applying Proposition \ref{prop:veryslowpullingstat} with $\sigma_{*}$
in place of $\sigma$ we have  that under \eqref{eqn:epsreallysmall}
\[
   \liminf_{\eps\to 0,\sigma\to 0}  \P(\tau\ge r \theta_\sigma) \geq e^{-(1+\delta)r}  - \P(\Xi>F).
\]
Finally, by letting $\delta\to 0$, $F\to+\infty$ we obtain that  under \eqref{eqn:epsreallysmall}
\[
   \liminf_{\eps\to 0,\sigma\to 0}  \P(\tau\ge r \theta_\sigma) \geq e^{-r},
\]
as desired.

\subsubsection{Limit theorem for the break position}

Let us fix $i\in\{1,\dots,d\}$ and $\delta,r,F>0$.
Then by using the necessary and the sufficient conditions for exit, we have
\begin{eqnarray*}
   && \P(\tau=\tau^i)
	\\
	&\ge&
\P(\tau\ge \delta \theta_\sigma^i, \tau^i\in [\delta \theta_\sigma^i,r\theta_\sigma^i],
\\
    && \textrm{and } \sigma V_s^j <1-\frac{\eps s}{d}- \eps K^j, \delta \theta_\sigma^i \le s \le \tau^i , j\not= i)
\\
   &\geq& \P\Big( \tau\ge \delta \theta_\sigma^i,  \exists t\in [\delta \theta_\sigma^i, r  \theta_\sigma^i]:
   \sigma\tV_t^i\ge 1+ \eps K^i + \sigma\Xi e^{-\amu t}
\\
   &&  \textrm{and } \sigma\tV_s^j < 1 -\frac{\eps s}{d}- \eps K^j - \sigma\Xi e^{-\amu s},
       \delta \theta_\sigma^i  \le s\le t, j\not=i \Big)
\\
   &\geq& \P\Big( \exists t\in [\delta \theta_\sigma^i, r  \theta_\sigma^i]:
   \sigma\tV_t^i\ge 1+ \eps K^i + \sigma\Xi e^{-\amu t}
\\
   &&  \textrm{and } \sigma\tV_s^j < 1 -\frac{\eps s}{d}- \eps K^j - \sigma\Xi e^{-\amu s},
       0\le s\le t, j\not=i \Big) - \P(\tau\le \delta \theta_\sigma^i)
\\
   &\geq& \P\Big( \exists t\in [\delta \theta_\sigma^i, r \theta_\sigma^i]:
        \sigma\tV_t^i\ge 1 + \eps K^i + \sigma F e^{-\amu \delta \theta_\sigma^i}
\\
   &&  \textrm{and } \sigma\tV_s^j < 1 - \frac{\eps r \theta_\sigma^i }{d} - \eps K^j
      - \sigma F e^{-\amu \delta \theta_\sigma^i },
      \delta \theta_\sigma^i \le s\le t, j\not=i \Big)
\\
   &&      -  \P(\tau\le \delta \theta_\sigma^i) - \P(\Xi>F)
\\
   &=& \P\Big( \exists t\in [\delta \theta_\sigma^i, r \theta_\sigma^i]:
   \sigma_{*i} \tV_t^i\ge 1
    \ \textrm{and } \sigma_{*j} \tV_s^j< 1,
   \delta \theta_\sigma^i \le s\le t, j\not=i \Big)
\\
  &&   - \P(\tau\le \delta \theta_\sigma^i) - \P(\Xi>F),
\end{eqnarray*}
where $\sigma_{*i}:=\sigma(1+\eps K^i + \sigma F e^{-\amu \delta \theta_\sigma^i})^{-1}$
and  $\sigma_{*j}=\sigma(1 - \frac{\eps r \theta_\sigma^i }{d} - \eps K^j
     - \sigma F e^{-\amu \delta \theta_\sigma^i })^{-1}$ for $j\not=i$. By \eqref{eqn:epsreallysmall}, $\sigma_{*j}^{-2}=\sigma^{-2}+o(1)$.

Introduce the corresponding exit times
$\tau_*^j=\inf \{t: \sigma_{*j} \tV_t^j\ge 1\}$, $1\le j\le d$,
and $\tau_*:=\min_{1\le j\le d} \tau_*^j$.
Using this notation, we have
\begin{eqnarray*}
 &&  \P\Big( \exists t\in [\delta \theta_\sigma^i, r \theta_\sigma^i]:
   \sigma_{*i} \tV_t^i\ge 1
    \ \textrm{and } \sigma_{*j} \tV_s^j< 1,
   \delta \theta_\sigma^i \le s\le t, j\not=i \Big)
\\
  &\geq& \P( \ttau_*=\ttau_{*}^i\in [\delta \theta_\sigma^i, r \theta_\sigma^i])
\\
  &\geq& \P( \ttau_*=\ttau_{*}^i)
  - \P(\ttau_*^i\not\in [\delta \theta_\sigma^i, r \theta_\sigma^i]).
\end{eqnarray*}

By using Remark \ref{rem:sigmastar} for the exit times (first step) and the limit theorem for
$\tau$ that is already proved (second step), we have that under \eqref{eqn:epsreallysmall}
\begin{eqnarray*}
  \liminf_{\eps\to 0,\sigma\to 0} \P( \tau = \tau^i)
   &\geq& \lim_{\sigma\to 0} \P( \ttau_\ast = \ttau_\ast^i)
    - ( 1 - e^{-\delta} + e^{-r} )
    - \lim_{\eps\to 0,\sigma\to 0}  \P(\tau \le \delta \theta_\sigma^i)
\\
  &&  - \P(\Xi>F)
\\
 &=& \lim_{\sigma\to 0} \P( \ttau_\ast = \ttau_\ast^i)
    - ( 1 - e^{-\delta}+ e^{-r}  )
    -  (1-e^{- 2d\delta/A_i})
\\
  &&    - \P(\Xi>F).
\end{eqnarray*}

Then, letting $\delta\to 0$, $r,F\to\infty$ and using Remark \ref{rem:sigmastar} for the exit positions
we obtain the lower bound
\[
  \liminf_{\eps\to 0,\sigma\to 0} \P(\tau=\tau^i)
  \ge \lim_{\sigma\to 0} \P( \ttau_*=\ttau_{*}^i) =
  \begin{cases}
       \frac{1}{d-1} & i\in\{2,\ldots,d-1\};\\
       \frac{1}{2(d-1)} & i\in\{1,d\}.
  \end{cases}
\]
Since the right hand sides of the lower bound sum up in $i$ to one,
the corresponding upper bound follows automatically,
which proves the result.


\section{Moderately slow pulling regime}
\label{sec:moderate}
This regime is defined by the following two conditions
\be \label{eqn:moderate1}
  \eps\to 0, \ \sigma\to 0, \  h:= \frac{\eps}{\sigma} \to 0,
\ee
but
\be \label{eqn:moderate2}
  \sigma^2 \ln\eps \to 0.
\ee

\subsection{Result for the stationary model}

We start with considering a simplified model including a stationary process
and will pass later to the initial setting.

Let $\tV_t^i$ be the stationary Gaussian processes defined in \eqref{eqn:Vti}.
Define the related exit times
\[
  \ttau^i:=\inf\left\{ t:  \sigma \tV^i_t \ge  \frac{t_\ast-t}{t_\ast}    \right\}
\]
and $\ttau:=\min_{1\le i\le d} \ttau^i$. The main result for the stationary case is as follows.

\begin{prop} \label{prop:moderatelyslowpullingstat}
Let assumptions $\eqref{eqn:moderate1}$ and $\eqref{eqn:moderate2}$ be verified. Then
\[
\P( \ttau=\ttau^i ) \to \begin{cases}
                         \frac{1}{d-1} & i\in\{2,\ldots,d-1\};\\
                         \frac{1}{2(d-1)} & i\in\{1,d\}.
                      \end{cases}
\]
Further, for any $i\in\{1,\ldots,d\}$,
\begin{equation} \label{eqn:moderateexittimescaling_i}
     h \sqrt{\ln(h^{-1})}
        \left(t_\ast-\ttau^i-\gamma h^{-1} \sqrt{\ln (h^{-1})} \right)
     \tod \chi_i,\qquad i\in \{1,\ldots,d\},
\end{equation}
and
\begin{equation} \label{eqn:moderateexittimescaling}
   h \sqrt{\ln (h^{-1})}
     \left(t_\ast-\ttau-\gamma  h^{-1} \sqrt{\ln (h^{-1})}\right)
     \tod \chi_0,
\end{equation}
where $\chi_i$ is a double exponential random variable with parameters
$a_i:=vd A_i  / \sqrt{2\pi}$ for $i\in\{1,\ldots,d\}$,
$a_0:=\sum_{i=1}^d a_i = 2vd^2 /\sqrt{2\pi}$ and $b:=\sqrt{2}/(vd)$,
while $v$, $\gamma$, and the $A_i$ are defined in $\eqref{eqn:parameterv0}$.
\end{prop}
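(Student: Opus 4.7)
The plan is to run the same tile-decouple-Pickands argument that powered the proof of Lemma~\ref{lem:veryslowpullingstatlemma}, but adapted to the fact that the barrier $(t_\ast-t)/t_\ast$ now depends on time. First, I would identify the critical exit window. Writing the exit condition as $\tV^i_t\geq(t_\ast-t)/(t_\ast\sigma)=uh/d$ with $u=t_\ast-t$ and $h=\eps/\sigma$, Pickands' lemma says the rate of upcrossings of $\tV^i$ over a level $x$ scales like $(x/v)e^{-x^2/(2v^2)}$. Setting this to be $\asymp h$ (the natural rate after rescaling) forces $uh/d\asymp v\sqrt{2\ln h^{-1}}$, i.e.\ $u\asymp\gamma h^{-1}\sqrt{\ln h^{-1}}$ with $\gamma=\sqrt2\,vd$; this explains the centering in \eqref{eqn:moderateexittimescaling_i} and uses \eqref{eqn:moderate2} only to ensure that this window comfortably fits inside $[0,t_\ast]$.

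For the convergence itself, fix $r\in\R$ and set $T_r:=t_\ast-\gamma h^{-1}\sqrt{\ln h^{-1}}-r/(h\sqrt{\ln h^{-1}})$. I would tile $[0,T_r]$ with alternating work intervals $I_m$ of length $L$ and buffer intervals of length $\ell$, chosen (as functions of $\sigma,h$) to balance the Pickands error against the decoupling error $e^{-\amu\ell}$ supplied by Lemma~\ref{lem:nonstr.4.2concrete}. On each $I_m$ the barrier $x_m:=(t_\ast-t_m)/(t_\ast\sigma)$ is essentially constant and Lemma~\ref{lem:pickands} (with $\alpha=1$, $\mathcal H_1=1$) gives $p^i_m:=\P(\max_{t\in I_m}\tV_t^i\geq x_m)\sim(A_i/\sqrt{2\pi})\,L\,(x_m/v)e^{-x_m^2/(2v^2)}$. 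The decoupling lemma then yields $\P(\ttau^i>T_r)\approx\prod_m(1-p^i_m)\approx\exp(-\sum_m p^i_m)$, and the Riemann sum converges, as $L\to 0$, to $\int_0^{T_r}(A_i/\sqrt{2\pi})(x(t)/v)e^{-x(t)^2/(2v^2)}\,dt$. Substituting $w=x(t)/v=(t_\ast-t)h/(vd)$ and using $\int_{w'}^{\infty}we^{-w^2/2}\,dw=e^{-w'^2/2}$, this integral equals $(A_i vd/\sqrt{2\pi})e^{-w'^2/2}(1+o(1))$. Expanding $w'=\sqrt{2\ln h^{-1}}+r/(vd\sqrt{\ln h^{-1}})$ gives $w'^2/2=\ln h^{-1}+br+o(1)$ with $b=\sqrt2/(vd)$, so the integral is $\sim a_i e^{-br}$ with $a_i=vdA_i/\sqrt{2\pi}$ and hence $\P(\ttau^i>T_r)\to\exp(-a_ie^{-br})$, which is the Gumbel law claimed in \eqref{eqn:moderateexittimescaling_i}.

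For the first break time $\ttau$ and the break position, I would replace $p^i_m$ by the union-event probability $\P(\bigcup_j\{\max_{t\in I_m}\tV^j_t\geq x_m\})$, combined with a Bonferroni expansion and the joint-exit estimate \eqref{eqn:admit7.3}, whose proof via the Dudley-integral machinery of \cite[Sec.~14]{L} transfers verbatim once the threshold $\sigma^{-1}$ is replaced by the local level $x_m\sim v\sqrt{2\ln h^{-1}}$; joint exits of two different components inside a single $I_m$ are then of order $\exp(-\eta\ln h^{-1})$ with $\eta>1$ and thus negligible compared to single exits. The result is $\P(\ttau>T_r)\to\exp(-a_0e^{-br})$ with $a_0=\sum_j a_j=2vd^2/\sqrt{2\pi}$ (using $\sum_j A_j=2d$), and the relative weights $a_i/a_0=A_i/(2d)$ give the claimed break-position distribution. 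The main technical obstacle I foresee is uniformity: Lemma~\ref{lem:pickands} is stated pointwise in the level, while the argument requires the asymptotic $p^i_m\sim\cdots$ uniformly over the $O(h^{-1}(L+\ell)^{-1})$ tiles inside the critical window. This must be checked with care; $L$ has to be small enough that the barrier variation across any single $I_m$ is of lower order in the exponent (roughly $L\ll h^{-1}(\ln h^{-1})^{-1/2}$), while $\ell$ must be large enough, of size $\gg\sigma^{-2}$, so that the accumulated decoupling error $Me^{-\amu\ell}$ stays subleading in the final estimate.
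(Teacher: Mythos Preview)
Your plan is essentially the paper's proof: tile the critical window, decouple adjacent tiles via Lemma~\ref{lem:nonstr.4.2concrete}, freeze the linear barrier on each tile, apply Pickands, and sum --- the paper evaluates the resulting sum as a geometric series rather than a Riemann integral, but this is the same computation, and the break-position argument likewise proceeds by decoupling $\{\ttau\ge\text{start of }I_m\}$ from $\{\ttau^i\in I_m\}$ and summing. One small correction: the buffer scale $\ell\gg\sigma^{-2}$ is a leftover from the very-slow-pulling argument; here the number of tiles is only polynomial in $h^{-1}$, so $\ell=B\ln(h^{-1})$ with $B\amu>2$ already gives $Me^{-\amu\ell}=o(1)$ (the paper takes $L=\psi^2$, $\ell=B\psi$ with $\psi:=\ln h^{-1}$), and the uniformity issue you flag is handled not by pushing Pickands uniformly over all of $[0,T_r]$ but by restricting the tiling to a window of width $O(h^{-1}\sqrt{\psi})$ around the critical level and proving separately that exit before that window has vanishing probability.
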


\begin{rem}\label{rem:moderatesigmastar}
At some point we will need a minor extension of Proposition~\ref{prop:moderatelyslowpullingstat}
allowing a very mild flexibility of exit times. Namely, let $\sigma_{i*}$, $1\le i\le d$,
be so close to $\sigma$ that
$\tfrac{ \sigma_{*i}}{ \sigma} = 1+ o\left( \tfrac{1}{\ln(h^{-1})}\right)$.
Let
\[
  \ttau_*^i:=\inf\left\{ t:   \sigma_{i*} \tV^i_t \ge  \frac{t_\ast-t}{t_\ast}  \right\}
\]
and $\ttau_*:=\min_{1\le i\le d} \ttau_*^i$.

Then we still have
\begin{eqnarray} \label{eqn:moderatefacescaling}
&& \P( \ttau_*=\ttau_*^i ) \to
                      \begin{cases}
                         \frac{1}{d-1} & i\in\{2,\ldots,d-1\};\\
                         \frac{1}{2(d-1)} & i\in\{1,d\}.
                      \end{cases}
\\ \label{eqn:moderateexittimescaling_i2}
    &&  h \sqrt{\ln(h^{-1})}
        \left(t_\ast-\ttau_*^i-\gamma h^{-1} \sqrt{\ln (h^{-1})} \right)
     \tod \chi_i,\ i\in \{1,\ldots,d\},
\\  \notag
   &&   h \sqrt{\ln (h^{-1})}
     \left(t_\ast-\ttau_*-\gamma  h^{-1} \sqrt{\ln (h^{-1})}\right)
     \tod \chi_0,
\end{eqnarray}
where the $(\chi_i)$ are as above.
\end{rem}

\subsection{Proof of Proposition \ref{prop:moderatelyslowpullingstat}}
Let us denote $\psi:=\ln (h^{-1})$.
Fix an $r\in\R$. The main part of the proof consists in the evaluation of the probability
\begin{eqnarray*}
       \P\left( h\sqrt{\psi} \big( t_\ast-\ttau^i-\gamma h^{-1} \sqrt{\psi}\big)  \leq -r \right)
       &=& \P \left( \ttau^i  \geq  t_\ast-\gamma h^{-1} \sqrt{\psi} +\frac{r h^{-1}}{\sqrt{\psi}} \right)
\\
       &=& \P \left( \ttau^i\geq k_r\right),
\end{eqnarray*}
where
$$ 
       k_r:=t_\ast-\gamma h^{-1} \sqrt{\psi} +\frac{r h^{-1}}{\sqrt{\psi}}.
$$ 
We will show that for any $\eps,\sigma$ satisfying \eqref{eqn:moderate1} and \eqref{eqn:moderate2}
one has
\be \label{eqn:lim_ttaui}
  \P \left( \ttau^i\geq k_r\right) \to \P(\chi \leq -r)= \exp(-a_i e^{br})
\ee
for a random variable $\chi$ following the appropriate double exponential distribution.

\paragraph{Proof of the upper bound in \eqref{eqn:lim_ttaui}.}
{\it Step 1: Preliminaries.}

Let us introduce the auxiliary variables
$$
    L:=\psi^2,\qquad \ell:= B \psi,\qquad
    M:= \lfloor h^{-1} \psi^{-9/4} \rfloor,
$$
where $B>2/\amu$ (here $\amu$ is the exponential mixing constant defined in \eqref{eqn:mixingV}),
and the intervals
$$
   I_m:=k_r - m(L+\ell) +\ell + [0,L],\qquad m\in\{1,\ldots,M\}.
$$
Note that these intervals are disjoint and their union is a subset of $[0,k_r]$
(by the choice of $M$, $L$, and $\ell$), where we use \eqref{eqn:moderate2}.

Define the event of  exit of component $i$ in $I_m$ as follows:
\be \label{eqn:event_noexit}
    E_m^i :=  \left\lbrace\exists t\in I_m : \sigma \tV_t^i \ge \frac{t_\ast -t}{t_\ast}  \right\rbrace.
\ee

{\it Step 2: Decoupling.}
Let $\mix^i(\cdot)$ denote the mixing coefficient of the process $\tV^i$
(cf. the definition in Section~\ref{sec:decoupling}).

Since we chose $B>2/\amu$, we deduce from Lemma~\ref{lem:nonstr.4.2concrete} that
\begin{equation} \label{eqn:oh2}
    \mix^i(\ell)\leq e^{-\amu\ell} = e^{- \amu B \psi} = h^{\amu B} = o(h^2).
\end{equation}

Note that, since $\bigcup_{m=1}^M I_m \subseteq [0,k_r]$,
\begin{equation} \label{eqn:intermdecoup25}
   \P(\ttau^i \geq k_r)
   \leq  \P(  \bigcap_{m=1}^{M} (E_m^i)^c ) \leq \P(  \bigcap_{m=1}^{M-1} (E_m^i)^c ) \cdot
   \P((E_M^i)^c) + \mix^i(\ell),
\end{equation}
where we used the definition of $\mix^i$ and the fact that the intervals $I_m$ are separated
by a distance of at least $\ell$ and $E_m^i\in\sigma(\tV_t^i,t\in I_m)$.

Iterating this argument and using \eqref{eqn:oh2} gives
\begin{eqnarray}
   \P(\ttau^i \geq k_r)
   &\leq& \prod_{m=1}^M \P( (E_m^i)^c )  + M \mix^i(\ell)  \notag
\\
   &=& \prod_{m=1}^M \P( (E_m^i)^c )  + o(1) \notag
\\
   &=& \prod_{m=1}^M (1-\P( E_m^i ))  + o(1) \notag
\\  \label{eqn:contigg34}
   &\leq& \exp( -\sum_{m=1}^M \P( E_m^i ) )   + o(1).
\end{eqnarray}

{\it Step 3: Making the exit curve constant on $I_m$ and using the Pickands lemma.}
Recall that
$$
    \P( E_m^i ) =\P( \exists t\in I_m : \sigma \tV_t^i \geq \frac{t_\ast -t}{t_\ast} )
    = \P(  \exists t\in I_m : \tV_t^i \geq h \frac{t_\ast -t}{d} ).
$$
If we estimate $t$ in the term $h \frac{t_\ast -t}{d}$ by the left endpoint of $I_m$,
we get a lower bound for the probability. The left endpoint is given by
$$
    k_r - (L+\ell)m + \ell \geq k_r - (L+\ell)m
    = t_\ast - \gamma h^{-1}\sqrt{\psi} + \frac{r h^{-1}}{\sqrt{\psi}} - (L+\ell)m.
$$
Therefore, using stationarity of $\tV^i$,
\begin{eqnarray} \label{eqn:furtherexp27}
   \P( E_m^i )
   &\geq &
   \P\left( \exists t\in I_m : \tV_t^i \geq \frac{\gamma\sqrt{\psi}}{d}
   - \frac{r}{d\sqrt{\psi}} + \frac{h(L+\ell)m}{d}\right)
\\
   &=:&
   \P\left( \exists t\in I_m : \tV_t^i \geq Q_m\right)  \notag
\\
   &=& \P\left(  \exists t\in [0,L] : \tV_t^i \geq Q_m\right). \notag
\end{eqnarray}
The latter probability can be estimated via the Pickands lemma (Lemma~\ref{lem:pickands}), because
the right-hand side tends to zero, as we shall see immediately. The assumptions of Pickands
lemma are satisfied with $\alpha=1$ due to \eqref{eqn:lemma7directv1} in Lemma~\ref{lem:7directv1}.
Using also the definition $\gamma=\sqrt{2} d v$, for every fixed $\delta\in(0,1)$ we eventually obtain
for $\eps, \sigma$ small enough (depending on $\delta$)
\be \label{eqn:continuationpick1}
   \P( E_m^i ) \geq  (1-\delta) \frac{A_i}{\sqrt{\pi} } \, L \, \sqrt{\psi}
   \, \exp\left( - \frac{Q_m^2}{2v^2}\right),
\ee

By using that $\psi^{-1/2}\to 0$ and $h(L+\ell) m \leq 2 h L M \to 0$,
the expression $Q_m$ in \eqref{eqn:continuationpick1} can be further estimated as
\begin{eqnarray*}
 Q_m^2 &=& \left(\frac{\gamma\sqrt{\psi}}{d}
   - \frac{r}{d\sqrt{\psi}} + \frac{h(L+\ell)m}{d} \right)^2
\\
   &=&    \frac{\gamma^2\psi}{d^2} -
    \frac{2\gamma\sqrt{\psi}}{d} \left( \frac{r}{d\sqrt{\psi}} - \frac{h(L+\ell)m}{d}\right) +o(1),
\end{eqnarray*}
hence, using again $\gamma=\sqrt{2} d v$,
\begin{eqnarray*}
 \exp\left( - \frac{Q_m^2}{2v^2}\right)
 &=& (1+o(1)) \exp\left( - \frac{\gamma^2\psi}{2v^2d^2}
   + \frac{\gamma\sqrt{\psi}}{v^2d} \left( \frac{r}{d\sqrt{\psi}} - \frac{h(L+\ell)m}{d}\right)\right)
\\
&=& (1+o(1))\, h \exp\left(  \frac{\sqrt{2} r}{v d} - \frac{h \sqrt{2\psi} (L+\ell)m}{vd}\right);
\end{eqnarray*}
and we obtain eventually
\[
    \P( E_m^i )\geq
    (1-2\delta) \frac{A_i}{\sqrt{\pi}} \, L\, \sqrt{\psi}\, h
    \exp\left( \frac{\sqrt{2} r}{vd} - \frac{h\sqrt{2\psi} (L+\ell)m}{vd}\right).
\]
This shows that
\begin{eqnarray*}
    && \sum_{m=1}^M \P( E_m^i )
\\
    &\geq& (1-2\delta)\frac{A_i}{\sqrt{\pi} } L \sqrt{\psi} \, h\, e^{\frac{\sqrt{2} r}{vd}}
    \sum_{m=1}^M \exp\left(- \frac{h \sqrt{2\psi}(L+\ell)m}{vd} \right)
\\
    &\geq & (1-3\delta)\frac{A_i}{\sqrt{\pi} } L \sqrt{\psi} \, h \, e^{\frac{\sqrt{2} r}{vd}}
    \frac{1-\exp\left(- \frac{h \sqrt{2\psi}(L+\ell)(M+1)}{vd} \right)}
         {1-\exp\left(- \frac{h \sqrt{2\psi}(L+\ell)}{vd} \right)},
\end{eqnarray*}
where we used that we deal with a geometric sum (and the term related to $m=0$ is absorbed into one $\delta$).
We show in Step 4 that the numerator tends to one. The denominator tends to zero, so that
we get
\begin{eqnarray*}
    \sum_{m=1}^M \P( E_m^i )
    &\geq& (1-4\delta)\frac{A_i}{\sqrt{\pi} } L \sqrt{\psi} \, h \,
          e^{\frac{\sqrt{2} r}{vd}} \frac{1}{ \frac{h \sqrt{2\psi}(L+\ell)}{vd}}
\\
    &=& (1-4\delta)\frac{A_i}{\sqrt{\pi} } L \sqrt{\psi} \, h \,
      e^{\frac{\sqrt{2} r}{vd}} \frac{vd}{h \sqrt{2\psi}(L+\ell)}
\\
    &\geq & (1-5\delta)\frac{vdA_i}{\sqrt{2\pi}}\ e^{\frac{\sqrt{2} r}{vd}}.
\end{eqnarray*}

Plugging this back into \eqref{eqn:contigg34}, we obtain
\[
    \P(\ttau^i \geq k_r)
    \leq \exp\left( -(1-5\delta)\frac{vdA_i}{\sqrt{2\pi}} \  e^{\frac{\sqrt{2} r}{vd}} \right) +o(1).
\]
Letting now $\eps,\sigma\to 0$ and then $\delta\to 0$, one obtains that under \eqref{eqn:moderate2}
$$
    \limsup_{\eps,\sigma\to 0}
    \P\left( h\sqrt{\psi} \big( t_\ast-\ttau^i-\gamma h^{-1} \sqrt{\psi}\big) \leq -r \right)
    \leq \exp( -\frac{vd A_i}{\sqrt{2\pi}} \ e^{\frac{\sqrt{2} r}{vd}} ),
$$
as required for an upper bound. This also identifies the constants $a_i:=\frac{vdA_i}{\sqrt{2\pi}}$
and $b:=\sqrt{2}/(vd)$ in the proposition.

{\it Step 4: A postponed estimate from Step 3.} To complete Step 3, we still have to show
that $\exp(-  h \sqrt{2\psi}(L+\ell)(M+1)/(vd)) \to 0$. This follows from our choice of
parameters because
$$
   h \sqrt{\psi}(L+\ell)M \sim h \sqrt{\psi}\,  \psi^2 \, h^{-1} \psi^{-9/4}
   = \psi^{1/4} \to \infty.
$$

This finishes the proof of the upper bound in \eqref{eqn:lim_ttaui}. 

\paragraph{Proof of the lower bound in \eqref{eqn:lim_ttaui}.}

{\it Step 1: There is no early exit.}
Set $k_{-\infty} :=t_*-2\gamma h^{-1}\sqrt{\psi}$.
We prove that
\be \label{eqn:kminusinfty0}
  \P\left(\ttau^i\le k_{-\infty} \right)\to 0.
\ee
Let
$B_n:=[ k_{-\infty} -n-1, k_{-\infty}-n]$. Then, as $h\to 0$,
\begin{eqnarray*}
\P(\ttau^i\le k_{-\infty}) &=&
    \P\left(\exists t\in [0, k_{-\infty}]\, :\, \sigma \tV^i \ge \frac {t_\ast-t}{t_\ast} \right)
\\
    &\le& \sum_{n=0}^{\infty} \P\left( \sigma \max_{t\in B_n} \tV_t^i \ge \frac {2\gamma h^{-1}\sqrt{\psi} +n}{t_\ast} \right)
\\
   &=& \sum_{n=0}^{\infty} \P\left(  \max_{t\in B_n} \tV_t^i \ge  2v\sqrt{2\psi} + d^{-1} h n \right)
\\
      &\le& \sum_{n=0}^{\infty} \exp\left( -  (2v\sqrt{2\psi} + d^{-1} h n)^2/ (3v^2) \right)
\\
     &\le&  \exp\left( - 8 \psi/3 \right)   \sum_{n=0}^{\infty} \exp\left( -   4 \sqrt{2\psi} h n/ (3vd) \right)
\\
   &=&  h^{8/3}   \left( 1- \exp\left( -   4 \sqrt{2\psi} h / (3vd) \right)\right)^{-1} \to 0, 
\end{eqnarray*}
where we used (a weak version of) the Pickands lemma in the fourth step.

{\it Step 2: Evaluation of ``typical'' exit times.}

As in the proof of the upper bound, let us introduce the auxiliary variables
$$
    L:=\psi^2, \qquad \ell:= B \psi, \qquad
    M:= \lfloor 2 \gamma h^{-1} \sqrt{\psi} (L+\ell)^{-1} \rfloor,
$$
where $B>2/\amu$ (note that $M$ is of slightly smaller order than in the proof of the upper
bound, while $L$ and $\ell$ are the same as in that proof), and the intervals
$$
    J_m:=k_r - m(L+\ell) + (0,\ell),\
    I_m:=k_r - m(L+\ell) +\ell + [0,L],\ m\in\{1,\ldots,M\}.
$$
Note that these intervals are disjoint and their union covers (for small $h$) the interval
$[k_{-\infty},k_r]$ (by the choice of $M$, $L$, and $\ell$).
Indeed, one only has to check that
$k_r-M(L+\ell) \leq t_\ast- 2\gamma h^{-1} \sqrt{\psi}$
for small enough $h$, which is true.

We start with a trivial bound using \eqref{eqn:kminusinfty0}:
\begin{eqnarray} \notag
   \P(\ttau^i \ge k_r)&=& 1 - \P(\ttau^i\in [ k_{-\infty}, k_r])
     -  \P(\ttau^i <   k_{-\infty} )
\\  \label{eqn:lbexittimer25}
   &=&  \P(\ttau^i\not\in [ k_{-\infty}, k_r]) - o(1).
\end{eqnarray}
Next, define the events of exit of component $i$ in $I_m$ and $J_m$, respectively, as follows:
\begin{eqnarray} \label{eqn:defnemilower}
      E_m^i &:=&  \left\lbrace\exists t\in I_m :
        \sigma \tV_t^i \ge \frac{t_\ast -t}{t_\ast}  \right\rbrace,
\\
    N_m^i &:=&  \left\lbrace\exists t\in J_m :
        \sigma \tV_t^i \ge \frac{t_\ast -t}{t_\ast}  \right\rbrace. \notag
\end{eqnarray}

Note that
\begin{eqnarray}
    \P( \ttau^i\not \in [ k_{-\infty}, k_r] )
    &\geq&
    \P( \bigcap_{m=1}^M (E_m^i)^c \cap (N_m^i)^c) \notag
\\  \label{eqn:lower25splittingmain}
    &\geq&
    \P( \bigcap_{m=1}^M (E_m^i)^c ) - \P( \bigcup_{m=1}^M  N_m^i),
\end{eqnarray}
where the first step follows from the fact that the union
of $\bigcup_{m=1}^M (I_m\cup J_m)$ covers the interval
$[k_{-\infty},k_r]$,
as already observed above, while the second inequality is trivial.

We shall show that the first term in \eqref{eqn:lower25splittingmain} converges to the required
probability related to the double-exponential law (Steps 3-5), while the second term
in \eqref{eqn:lower25splittingmain} tends to zero (Step 6).

{\it Step 3: Decoupling.}

We estimate  $\P( \bigcap_{m=1}^M (E_m^i)^c )$ in the very same way as we did
in \eqref{eqn:intermdecoup25} and \eqref{eqn:contigg34}. In this way, for every
fixed $\delta\in(0,1)$, we obtain

\begin{eqnarray}
    \P( \bigcap_{m=1}^M (E_m^i)^c ) &\geq& \prod_{m=1}^M \P( (E_m^i)^c ) - o(1)  \notag
\\ \label{eqn:contigg34similar}
    &\geq& \exp( -\sum_{m=1}^M \P( E_m^i ) (1+\delta)) - o(1),
\end{eqnarray}
where we used that \eqref{eqn:oh2} is again valid (for the first step) and that
$\P( E_m^i) \to 0$, as we shall see presently, for the second step in
\eqref{eqn:contigg34similar}.

{\it Step 4: Making the exit curve constant on $I_m$ and using the Pickands lemma.}
Recall that
$$
    \P( E_m^i ) =\P( \exists t\in I_m : \sigma \tV_t^i \geq \frac{t_\ast -t}{t_\ast} )
    = \P\big( \exists t\in I_m : \tV_t^i \geq h \frac{t_\ast -t}{d} \big).
$$
Similarly to the proof of the upper bound (see \eqref{eqn:furtherexp27}),
one can estimate this expression -- this time from above -- by estimating $t$ by
the right endpoint of the interval -- as follows (using also stationarity in the
second step):
\begin{eqnarray}
   && \P( E_m^i )
    \leq
    \P\left( \exists t\in I_m : \tV_t^i \geq \frac{\gamma\sqrt{\psi}}{d}
    - \frac{r}{d\sqrt{\psi}} + \frac{h(L+\ell)(m+1)}{d}\right) \notag
\\
   &=:&
    \P\left(  \exists t\in I_m : \tV_t^i \geq Q_{m-1} \right) \notag
\\
    &=&
    \P\left(  \exists t\in [0,L] : \tV_t^i \geq Q_{m-1}  \right) \notag
\\  \label{eqn:continuationpick1llowerapp}
     &\leq & (1+\delta)\,
     \frac{A_i}{\sqrt{\pi} } \, L\, \sqrt{\psi}\, \exp\left( - \frac{Q_{m-1}^2}{2v^2} \right)
\\
    &\leq & (1+\delta)\,
     \frac{A_i}{\sqrt{\pi} } \, L\, \sqrt{\psi}\,
     \exp\left( -\frac{\gamma^2\psi}{2v^2d^2}
     +\frac{\gamma r}{v^2d^2} -\frac{\gamma\sqrt{\psi}h(L+\ell)(m+1)}{v^2d^2}\right) \notag
\\ 
       &=& (1+\delta)\,
        \frac{A_i}{\sqrt{\pi} } \, L\, \sqrt{\psi} \, h \, e^{\frac{\sqrt{2}r}{vd}}\,
        \exp\left(-\frac{\sqrt{2\psi}h(L+\ell)(m+1)}{vd}\right),
\end{eqnarray}
where we used the Pickands lemma (Lemma~\ref{lem:pickands})
in \eqref{eqn:continuationpick1llowerapp}, because the right-hand side tends to zero.
The assumptions of the Pickands lemma are satisfied due to \eqref{eqn:lemma7directv1}
in Lemma~\ref{lem:7directv1}. The estimate holds for $\eps, \sigma$ small enough
(depending on $\delta$).

{\it Step 5: Final computation for the main term in \eqref{eqn:lower25splittingmain}.}

Using the estimate from the last step, we obtain:
\begin{eqnarray*}
    && \sum_{m=1}^M \P( E_m^i )
\\
    &\leq&  (1+\delta)
    \frac{A_i}{\sqrt{\pi} } \, L\cdot \sqrt{\psi} \, h\, e^{\frac{\sqrt{2}r}{vd}}\sum_{m=1}^M
    \exp\left(- \frac{\sqrt{2\psi}h(L+\ell)(m+1)}{vd}\right)
\\
    &\leq&  (1+\delta)
    \frac{A_i}{\sqrt{\pi} } \, L\, \sqrt{\psi} \, h \, e^{\frac{\sqrt{2}r}{vd}}\sum_{m=0}^\infty
    \exp\left(- \frac{\sqrt{2\psi}h(L+\ell)m}{vd}\right)
\\
    &=&  (1+\delta)
    \frac{A_i}{\sqrt{\pi} } \, L\, \sqrt{\psi} \, h \, e^{\frac{\sqrt{2}r}{vd}}
    \frac{1}{1- \exp\left(- \frac{\sqrt{2\psi}h(L+\ell)}{vd}\right)}
\\
    &\leq&  (1+2\delta)
    \frac{A_i}{\sqrt{\pi} } \, L\, \sqrt{\psi} \, h \, e^{\frac{\sqrt{2}r}{vd}}
    \frac{1}{\frac{\sqrt{2\psi}h(L+\ell)}{vd}}
\\
    &\leq&  (1+2\delta)
\frac{ v dA_i}{\sqrt{2\pi} } \, e^{\frac{\sqrt{2}r}{vd}}.
\end{eqnarray*}

Plugging back the last term into \eqref{eqn:contigg34similar}, we obtain
\begin{equation} \label{eqn:contigg34similar2}
 \P( \bigcap_{m=1}^M (E_m^i)^c )
 \geq
 \exp\left( - (1+3\delta) \frac{vd A_i}{\sqrt{2\pi}} \, e^{\frac{\sqrt{2}r}{vd}}
   \right).
\end{equation}

{\it Step 6: The second term in \eqref{eqn:lower25splittingmain} tends to zero.}

Note that (replacing $t\in J_m$ by the right endpoint of that interval as in
the third step)
\begin{eqnarray*}
   \P( \bigcup_{m=1}^M  N_m^i)
   &\leq& \sum_{m=1}^M \P( N_m^i)
\\
   &\leq& \sum_{m=1}^M \P( \exists t \in J_m : \tV_t^i \geq h \frac{t_\ast-t}{d})
\\
    &\leq& \sum_{m=1}^M \P( \exists t \in J_m : \tV_t^i \geq h
    \frac{t_\ast-k_r+(m+1)(L+\ell)}{d})
\\
    &=& \sum_{m=1}^M \P( \exists t \in [0,\ell] : \tV_t^i
    \geq  v \sqrt{2\psi}-\frac{r}{ \sqrt{\psi}}+\frac{h(m+1)(L+\ell)}{d}),
\end{eqnarray*}
where we used stationarity in the last step. Again by the Pickands lemma
(Lemma~\ref{lem:pickands}), this term is upper bounded by
\begin{eqnarray*}
    && \sum_{m=1}^M c_i\, \ell\, \sqrt{\psi}\, \exp\left( - \frac{1}{2v^2}
       \left(  v \sqrt{2\psi}-\frac{r}{ \sqrt{\psi}}+\frac{h(m+1)(L+\ell)}{d}\right)^2 \right)
\\
     &\leq& \sum_{m=1}^M c_i\, \ell \,\sqrt{\psi}\, \exp\left( - \frac{1}{2v^2}
        \left(  2v^2\psi-2v\sqrt{2\psi}\left[\frac{r}{d\sqrt{\psi}}-\frac{h(m+1)(L+\ell)}{d}
        \right]\right) \right)
\\
     &\leq&  c_i \, \ell \,\sqrt{\psi}\, h\, e^{\frac{\sqrt{2}r}{vd}}
     \sum_{m=0}^\infty \exp\left( -\frac{\sqrt{2\psi}\,h \,m (L+\ell)}{v d} \right)
\\
     &=&  c_i\, \ell \,\sqrt{\psi}\, h \, e^{\frac{\sqrt{2}r}{vd}} \,
     \frac{1}{1- \exp\left( -\frac{\sqrt{2\psi}\, h \, (L+\ell)}{v d} \right)}
\\
     &\leq&  c_i \,\ell\, \sqrt{\psi}\, h \,e^{\frac{\sqrt{2}\, r}{vd}}
     \, \frac{2vd}{\sqrt{2\psi}\, h (L+\ell) }
\\
     &=&  \tilde c_i(r,d) \, \frac{\ell}{L+\ell} \to 0,
\end{eqnarray*}
as claimed.

By combining now this result with the estimates \eqref{eqn:lbexittimer25},
\eqref{eqn:lower25splittingmain}, \eqref{eqn:contigg34similar2} and letting
$\delta\to 0$, we obtain
\begin{eqnarray*}
     \liminf_{\eps,\sigma\to 0} \P\left( h \sqrt{\psi} \left(t_\ast-\ttau^i-\gamma h^{-1}
     \sqrt{\psi} \right) \leq -r \right)
     &=&
      \liminf_{\eps,\sigma\to 0} \P\left( \ttau^i \geq k_r \right)
\\
     &\geq& \exp( -\frac{vdA_i}{\sqrt{2\pi}}\,  e^{\frac{\sqrt{2} r}{vd}} ),
\end{eqnarray*}
as required for the lower bound.

This finishes the proof of the limit theorem for the exit times
in Proposition~\ref{prop:moderatelyslowpullingstat}.

\paragraph{Proof of the limit theorem for the break time $\ttau$.}
Would the break times for different chain elements $\ttau^i$ be fully independent,
the claim would be almost trivial by
\begin{eqnarray*}
   &&    \P\left( h\sqrt{\psi} \big( t_\ast-\ttau-\gamma h^{-1} \sqrt{\psi}\big)  \leq -r \right)
       = \P \left( \ttau\geq k_r\right)
       = \P \left( \min_{1\le i\le d}\ttau^i\geq k_r\right)
\\
        &=&  \prod_{i=1}^d  \P \left( \ttau^i\geq k_r\right) \to  \prod_{i=1}^d \exp( - a_i  e^{b r} )
         =  \exp\left( - \sum_{i=1}^d a_i  e^{br} \right) = \exp\left( - a_0  e^{br} \right).
\end{eqnarray*}
Since they are only {\it asymptotically} independent, the result remains the same but the justification
requires more technical efforts. Here we only trace the proof of the upper and lower bounds.

Imitating the previous proof of the upper bound, one obtains
\begin{eqnarray*}
    \P(\ttau\ge k_r) &\leq& \P\left( \bigcap_{i=1}^d\bigcap_{m=1}^M (E_m^i)^c \right)
  \leq \prod_{m=1}^{M}  \P\left( \bigcap_{i=1}^d (E_m^i)^c \right) +o(1)
\\
  &\leq& \exp\left( - \sum_{m=1}^{M}  \P\left( \bigcup_{i=1}^d  E_m^i \right) \right)+o(1),
\end{eqnarray*}
where $E_m^i$ are defined in \eqref{eqn:event_noexit}. By the inclusion-exclusion formula,
\[
   \P\left( \bigcup_{i=1}^d  E_m^i \right) \ge  \sum_{i=1}^d \P\left(  E_m^i \right)
   - \sum_{1\le i_1<i_2\le d} \P\left(  E_m^{i_1} \cap E_m^{i_2} \right).
\]
In fact, the second sum is negligible with respect to the first one because, using notation
from \eqref{eqn:furtherexp27}, for simple exit probabilities we have
\[
  \P\left(  E_m^i \right) \geq \exp\left(- \frac{Q_m^2(1+o(1))}{2v^2} \right)
\]
while, by the standard arguments on Gaussian extrema, for double exit probabilities we have for $i_1\neq i_2$
\[
  \P\left( E_m^{i_1} \cap E_m^{i_2}  \right) \leq \exp\left(-\frac{\eta\, Q_m^2 (1+o(1))}{2v^2}\right)
\]
with $\eta>1$ depending on $\max_{0\le t_1,t_2\le L} \covcoef(\tV^{i_1}_{t_1}, \tV^{i_2}_{t_2})<1$,
cf.\ the argument leading to \eqref{eqn:admit7.3}. Thus we obtain
\begin{eqnarray*}
  \P(\ttau\ge k_r) &\leq&
  \exp\left( -  \sum_{i=1}^d \sum_{m=1}^{M}  \P\left( E_m^i \right) (1+o(1)) \right)+o(1)
\\
 &\leq&     \exp\left( - \sum_{i=1}^d   \frac{vdA_i}{\sqrt{2\pi}} e^{\frac{\sqrt{2}r}{vd}}  (1+o(1)) \right)+o(1)
\\
  &=&     \exp\left( - a_0 e^{br} (1+o(1)) \right)+o(1),
\end{eqnarray*}
which is the desired upper bound.

On the other hand, imitating the previous proof of the lower bound, for every fixed
$\delta>0$, one obtains
\begin{eqnarray*}
  \P(\ttau\ge k_r) &\geq& \P\left( \bigcap_{i=1}^d \bigcap_{m=1}^M (E_m^i)^c \right)
   - \P\left( \bigcup_{i=1}^d \bigcup_{m=1}^M  N_m^i\right) -o(1)
\\
   &\geq& \exp\left( -  \sum_{i=1}^d \sum_{m=1}^M \P(E_m^i )(1+\delta)\right)
   - \sum_{i=1}^d \sum_{m=1}^M  \P(N_m^i) -o(1)
\\
      &\geq& \exp\left( -  \sum_{i=1}^d (1+3\delta)  \frac{vdA_i}{\sqrt{2\pi}} e^{\frac{\sqrt{2}r}{vd}} (1+\delta)\right) -o(1)
\\
      &=& \exp\left( -  (1+3\delta) a_0 e^{br} (1+\delta)\right) -o(1),
\end{eqnarray*}
where $E_m^i$ and $N_m^i$ are defined in \eqref{eqn:defnemilower}.
Letting $\delta\to 0$ proves the lower bound.

\paragraph{Proof of the limit theorem for the exit position.}
We have to prove that under conditions \eqref{eqn:moderate1} and \eqref{eqn:moderate2}
$\P(\ttau=\ttau^i)$ tends to the limiting probabilities $p_i$ in the theorem, which can be expressed as $p_i=A_i/(2d)$.
Since $\sum_{i=1}^{d} p_i=1$ it is sufficient to prove the upper bound
\[
 \limsup_{\eps,\sigma\to 0} \P(\ttau=\ttau^i) \le  p_i.
\]
Let us fix a large $r>0$, let $L:=B \psi$ with $B>2/\mu$
(here, $\mu$ is the exponential mixing constant defined in \eqref{eqn:mixingV}),
and $M:=\lceil\frac{h^{-1} r}{L\sqrt{\psi}}\rceil$.
Introduce the sequence of intervals
\[
   I_m:=k_0+mL+[0,L], \qquad -M\le m\le M.
\]
Then we have the bounds for the corresponding leftmost and rightmost points of $I_{-M}$ and $I_M$, respectively,
\begin{eqnarray*}
   k_0-ML &\leq& k_0- \frac{h^{-1} r}{\sqrt{\psi}} =k_{-r};
\\
   k_0+(M+1)L &\geq& k_0 + \frac{h^{-1} r}{\sqrt{\psi}} =k_r.
\end{eqnarray*}
Therefore,
\[
  \P(\ttau=\ttau^i) \le \P(\ttau\le k_{-r}) + \P(\ttau\ge k_{r})
  +\sum_{m=-M}^{M} \P(\ttau=\ttau^i\in I_m).
\]
By the limit theorem for the exit time $\ttau$, cf. \eqref{eqn:moderateexittimescaling}, we have
\begin{eqnarray}
   \lim_{\eps,\sigma\to 0}  \P(\ttau\le k_{-r})  &=& 1-\exp\left(-a_0 e^{-br}\right), \label{eqn:2111a}
\\
   \lim_{\eps,\sigma\to 0}  \P(\ttau\ge k_{r})  &=& \exp\left(-a_0 e^{br}\right),\label{eqn:2111b}
\end{eqnarray}
with $a_0=\tfrac{2vd^2}{\sqrt{2\pi}}$, $b=\tfrac{\sqrt{2}}{vd}$.

We also note that
\begin{eqnarray*}
 \{ \ttau^i\in I_m\} &\subseteq&
   \left\{ \exists t\in I_m: \sigma \tV_t^i\ge  \frac{t_\ast-t}{t_\ast} \right\}
 \subseteq
   \left\{ \max_{t\in I_m} \tV_t^i \ge d^{-1}(\gamma\sqrt{\psi}-\frac{r}{\sqrt{\psi}}-Lh )  \right\}
\\
 &=& \left\{ \max_{t\in I_m} \tV_t^i \ge  v \sqrt{2\psi}- \frac{r}{d\sqrt{\psi}} -Lh \right\}.
\end{eqnarray*}

Now we may use the mixing property, i.e.\ Lemma~\ref{lem:nonstr.4.2concrete}, and obtain
\begin{eqnarray*}
  &&  \sum_{m=-M}^{M} \P(\ttau \ge k_0 + (m-1)L; \ttau^i\in I_m)
\\
  &\leq&  \sum_{m=-M}^{M} \P(\ttau \ge k_0 + (m-1)L;
    \max_{t\in I_m} \tV_t^i \ge  v \sqrt{2\psi}- \frac{r}{d\sqrt{\psi}} -Lh )
\\
   &\leq& \sum_{m=-M}^{M} [ \P(\ttau \ge k_0 + (m-1)L) \
     \P(  \max_{t\in I_m} \tV_t^i \ge v \sqrt{2\psi}- \frac{r}{d\sqrt{\psi}}-Lh )+e^{-\amu L}]
\\
   &=&  (2M+1)e^{-\amu L} + \sum_{m=-M}^{M} \P(\ttau \ge k_0 + (m-1)L)\
   \P( \max_{t\in I_m} \tV_t^i \ge  v \sqrt{2\psi}- \frac{r}{d\sqrt{\psi}} -Lh).
\end{eqnarray*}
Notice that $\amu B>1$ yields
\[
  (2M+1) e^{-\amu L} \sim  \frac{2 h^{-1} r}{L\sqrt{\psi}} \ e^{-\amu B\psi} = h^{\amu B-1} \, \frac{2r}{B\psi^{3/2}} \to 0,
  \qquad \textrm{as } h\to 0.
\]
It remains to evaluate the probabilities in the sum. Let $\rho_m:=m \, L \, \sqrt{\psi}\, h$.
Then $k_{\rho_m}= k_0+mL$ and by using again the limit theorem for $\ttau$ we have
\[
    \P(\ttau \ge k_0 + (m-1)L) = \P(\ttau\ge k_{\rho_m}-L) = \exp\left( -a_0 e^{b\rho_m}\right) (1+o(1)),
\]
where we used that $|\rho_m|\leq r$. On the other hand, by the Pickands lemma,
\begin{eqnarray*}
  && \P\left( \max_{t\in I_m} \tV_t^i \ge  v \sqrt{2\psi}- \frac{\rho_m}{d\sqrt{\psi}}-Lh \right)
\\
   &=& \frac{A_i L}{\sqrt{2\pi}} \ \sqrt{2\psi}\ \exp( -\psi + b\rho_m)\, (1+o(1))
\\
   &=& \frac{A_i L}{\sqrt{2\pi}} \ \sqrt{2\psi}\, h \, \exp( b\rho_m) \, (1+o(1))
\\
   &=& \frac{A_i}{\sqrt{\pi}}\  \exp(b\rho_m)\, (\rho_m-\rho_{m-1}) \,  (1+o(1)).
\end{eqnarray*}
We conclude that
\begin{eqnarray*}
 && \sum_{m=-M}^{M} \P(\ttau=\ttau^i\in I_m)
\\
  &\leq&  \frac{A_i}{\sqrt{\pi}}
  \sum_{m=-M}^{M}  \exp\left( -a_0 e^{b\rho_m}\right) \ \exp(b\rho_m)\, (\rho_m-\rho_{m-1}) +o(1),
\end{eqnarray*}
and recognize the Riemann sum of the integral
\begin{eqnarray*}
  \int_{-r}^{r} \exp\left( -a_0 e^{b\rho}\right) \, \exp(b\rho)\, \dd\rho
  &=&
   b^{-1} \int_{e^{-br}}^{e^{br}} \exp\left( -a_0 z \right) \, \dd z
\\
   &=&
   (ba_0)^{-1} \left[ \exp\left( -a_0 e^{-br} \right) -  \exp\left( -a_0 e^{br} \right) \right].
\end{eqnarray*}
Therefore, taking the limit in $\eps,\sigma\to 0$ with fixed $r$ yields (when also using
\eqref{eqn:2111a} and \eqref{eqn:2111b})
\begin{eqnarray*}
  && \limsup_{\eps,\sigma\to 0} \P(\ttau=\ttau^i) \le
   1- \exp\left(-a_0 e^{-br}\right) + \exp\left(-a_0 e^{br}\right)
\\
  &&  +
  \frac{A_i}{\sqrt{\pi}}\ (ba_0)^{-1}
  \left[ \exp\left( -a_0 e^{-br} \right) -  \exp\left( -a_0 e^{br} \right) \right].
\end{eqnarray*}
Finally, letting $r\to +\infty$, we obtain
\[
  \limsup_{\eps,\sigma\to 0} \P(\ttau=\ttau^i)
  \le   \frac{A_i}{\sqrt{\pi}}\ (ba_0)^{-1} = \frac{A_i}{2d} = p_i,
 \]
as required.

\subsection{Proof of Theorem \ref{thm:intermediateslowpulling}}

Let $\zeta:= t_\ast/2 = \frac{d}{2\eps}$.

\subsubsection{Limit theorem for $\tau^i$}

\paragraph{Upper bound for the exit time.}
Using the sufficient condition for the exit \eqref{eqn:exitsufficientshort},
we obtain that for every $F>0$ and every real $r$
\begin{eqnarray*}
   \P(\tau^i>k_r) &\leq&
   \P\left(\sigma \tV^i_t < 1 - \frac{\eps t}{d} + \eps K^i +\sigma\,\Xi\, e^{-\amu t},\ 0\le t\le k_r\right)
\\
    &\leq&
   \P\left(\sigma \tV^i_t < 1 - \frac{\eps t}{d} + \eps K^i + \sigma\, F\, e^{-\amu t},\ 0\le t\le k_r \right) + \P(\Xi>F)
\\
   &\leq&
   \P\left(\sigma \tV^i_t < 1 - \frac{\eps t}{d} + \eps K^i + \sigma\, F\, e^{-\amu \zeta},\ \zeta \le t\le k_r \right) + \P(\Xi>F)
\\
    &=&
   \P\left(\sigma \tV^i_t < 1 - \frac{\eps (t-u)}{d}, \zeta \le t\le k_r\right) + \P(\Xi>F),
\end{eqnarray*}
where
\be \label{eqn:u}
  u:= \frac{d}{\eps}\, (\eps K^i +\sigma\, F\, e^{-\amu \zeta})
  = d\,K^i +h^{-1} d\, F\,  e^{-\amu \zeta} \ll \left(\frac{h^{-1}}{\psi}\right),
\ee
because $e^{\amu \zeta} \gg \ln(\eps^{-1}) \ge \ln(\sigma \eps^{-1})  = \psi$.
Using the stationarity of $\tV^i$, we continue the evaluation with
\begin{eqnarray}
 &&   \P\left(\sigma \tV^i_t < 1 - \frac{\eps (t-u)}{d},\ \zeta \le t\le k_r\right) \label{eqn:2111cap1}
\\
    &=&
     \P\left(\sigma \tV^i_{s+u} < 1 - \frac{\eps s}{d},\  \zeta -u\le s\le k_r-u\right)\notag
\\
    &\leq&  \P\left(\sigma \tV^i_{s+u} < 1 - \frac{\eps s}{d},\  \zeta \le s\le k_r-u\right)\notag
\\
   &=&  \P\left(\sigma \tV^i_{s} < 1 - \frac{\eps s}{d},\ \zeta \le s\le k_r-u\right)\notag
\\
   &\le&  \P\left(\sigma \tV^i_{s} < 1 - \frac{\eps s}{d},\ 0 \le s\le k_r-u\right)
   +  \P\left(\exists s\in [0,\zeta]: \sigma \tV^i_{s} \ge 1 - \frac{\eps s}{d}\right) \notag
\\
   &=& \P(\ttau^i>k_r-u) + \P(\ttau^i\le \zeta). \label{eqn:2111cap2}
\end{eqnarray}
The bound in \eqref{eqn:u} shows that for every $\delta>0$ the inequality
$k_r-u> k_r - \delta h^{-1} \psi^{-1/2} = k_{r-\delta}$ is true eventually.

On the other hand, for every real $\rho$ we also have
eventually
\be \label{eqn:zetasmall}
   \zeta < k_\rho.
\ee

Therefore, we obtain by using \eqref{eqn:moderateexittimescaling_i}
in Proposition \ref{prop:moderatelyslowpullingstat},
\begin{eqnarray*}
   \limsup_{\eps,\sigma\to 0}  \P(\tau^i>k_r) &\le& \lim_{\eps,\sigma\to 0}  \P(\ttau^i>k_{r-\delta}) + \lim_{\eps,\sigma\to 0} \P(\ttau^i\le k_\rho)
   + \P(\Xi>F)
\\
   &=& e^{-a_ie^{b(r-\delta)}} + [1- e^{-a_ie^{b\rho}}]  + \P(\Xi>F).
\end{eqnarray*}
By letting $\delta\to 0$, $\rho\to -\infty$, and $F\to+\infty$ we obtain the desired
\[
   \limsup_{\eps,\sigma\to 0}  \P(\tau^i>k_r) \le  e^{-a_ie^{br}}.
\]

\paragraph{Lower bound for the exit time.}

Using the necessary condition for the exit \eqref{eqn:exitnecessaryshort} we obtain
that for every $F>0$ and every real $r$
\begin{eqnarray*}
   \P(\tau^i\ge k_r) &\geq&
   \P\left(\sigma \tV^i_t < 1 - \frac{\eps t}{d} - \eps K^i - \sigma\, \Xi\, e^{-\amu t},\  0\le t\le k_r\right)
\\
    &\geq&
   \P\left(\sigma \tV^i_t < 1 - \frac{\eps t}{d} - \eps K^i - \sigma\, F\, e^{-\amu t},\ 0\le t\le k_r \right) - \P(\Xi>F)
\\
    &\geq& \P(B\cap D) - \P(\Xi>F) \ge \P(B)-\P(D^c) - \P(\Xi>F),
\end{eqnarray*}
where
\begin{eqnarray*}
  B &:=& \left\{  \sigma \tV^i_t < 1 - \frac{\eps t}{d} - \eps K^i - \sigma\, F\, e^{-\amu \zeta},\ 0\le t\le k_r \right\};
\\
  D &:=& \left\{  \sigma \tV^i_t < 1 - \frac{\eps t}{d} - \eps K^i - \sigma\, F,\ 0\le t\le \zeta \right\}.
\end{eqnarray*}
For the main part we have
\begin{eqnarray*}
  \P(B) &=& \P\left(\sigma \tV^i_t < 1 - \frac{\eps (t+u)}{d}, 0\le t\le k_r\right)
\\
  &=& \P\left(\sigma \tV^i_{s-u} < 1 - \frac{\eps s}{d}, u \le s\le k_r+u\right)
\\
  &\ge&  \P\left(\sigma \tV^i_{s-u} < 1 - \frac{\eps s}{d}, 0 \le s\le k_r+u\right),
\end{eqnarray*}
where $u$ is the same as in \eqref{eqn:u}.
By using stationarity of $\tV^i$, we continue the evaluation with
\[
    \P(B) \geq  \P\left(\sigma \tV^i_{s} < 1 - \frac{\eps s}{d}, 0\le s\le k_r+u\right)
= \P(\ttau^i>k_r+u).
\]

The bound in \eqref{eqn:u} shows that for every $\delta>0$ the inequality
$k_r+u<k_r +\delta h^{-1} \psi^{-1/2}= k_{r+\delta}$ is true eventually. Therefore, we obtain by using \eqref{eqn:moderateexittimescaling_i}
in Proposition \ref{prop:moderatelyslowpullingstat},
\[
   \liminf_{\eps,\sigma\to 0}  \P(B) \ge \lim_{\eps,\sigma\to 0}  \P(\ttau^i>k_{r+\delta})= e^{-a_ie^{b(r+\delta)}}.
\]

For the remainder term, we have
\begin{eqnarray*}
  \P(D) &=& \P\left(\sigma \tV^i_t < 1 - \frac{\eps (t+u_D)}{d}, 0\le t\le \zeta\right)
\\
  &=& \P\left(\sigma \tV^i_{s-u_D} < 1 - \frac{\eps s}{d}, u_D \le s\le  \zeta +u_D\right)
\\
  &\ge&  \P\left(\sigma \tV^i_{s-u_D} < 1 - \frac{\eps s}{d},  0 \le s\le  \zeta +u_D \right),
\end{eqnarray*}
where
\be \label{eqn:uD}
   u_D:=d\,K^i +h^{-1} d\, F = O\left(h^{-1}\right)= o\left(\eps^{-1}\right).
\ee

By using stationarity of $\tV^i$, we continue the evaluation with
\[
    \P(D) \geq  \P\left(\sigma \tV^i_{s} < 1 - \frac{\eps s}{d}, 0\le s\le \zeta +u_D \right)
= \P(\ttau^i> \zeta +u_D).
\]
Since for every real $\rho$ we have eventually $\zeta +u_D= \zeta(1+o(1))<k_\rho$, it follows
from \eqref{eqn:moderateexittimescaling_i} that
\[
 \liminf_{\eps,\sigma\to 0} \P(D)\geq \lim_{\eps,\sigma\to 0} \P(\ttau^i> k_\rho) = e^{-a_ie^{b\rho}}.
\]
Equivalently, we have $\limsup_{\eps,\sigma\to 0} \P(D^c)\leq 1- e^{-a_ie^{b\rho}}$. Therefore,
\[
   \liminf_{\eps,\sigma\to 0}  \P(\tau^i>k_r) \geq e^{-a_ie^{b(r+\delta)}}
     + [ 1- e^{-a_ie^{b\rho}} ] -  \P(\Xi>F).
\]
By letting $\delta\searrow 0$, $\rho\to-\infty$, and $F\to+\infty$, we obtain the desired
\[
   \liminf  \P(\tau^i>k_r) \ge  e^{-a_i e^{br}}.
\]

\subsubsection{Limit theorem for $\tau$}

\paragraph{Upper bound for the exit time.}
The proof of the upper bound is essentially the same as that for $\tau^i$. Using sufficient condition
for the exit \eqref{eqn:exitsufficientshort} we see that for every $F>0$ and every real $r$
\begin{eqnarray*}
     &&  \P(\tau>k_r)
\\
     &\leq&  \P\left(\sigma \tV^i_t < 1 - \frac{\eps t}{d} + \eps K^i +\sigma\Xi e^{-\amu t},\
         0\le t\le k_r, 1\le i\le d \right)
\\
     &\leq&  \P\left(\sigma \tV^i_t < 1 - \frac{\eps t}{d} + \eps K^i + \sigma \, F \, e^{-\amu t}, 0\le t\le k_r,\
         1\le i\le d  \right) + \P(\Xi>F)
\\
    &\leq& \P\left(\sigma \tV^i_t < 1 - \frac{\eps t}{d} + \eps K^i + \sigma \, F \, e^{-\amu \zeta}, \
         \zeta \le t\le k_r, 1\le i\le d  \right) + \P(\Xi>F)
\\
    &\leq& \P\left(\sigma  \max_{1\le i\le d} \tV^i_t < 1 - \frac{\eps (t-\ustar)}{d}, \
         \zeta \le t\le k_r   \right) + \P(\Xi>F),
\end{eqnarray*}
where
\begin{eqnarray} \label{eqn:ustar}
    \ustar&:=& \frac{d}{\eps}\, (\eps  \max_{1\le i\le d} K^i +\sigma\, F  \, e^{-\amu \zeta} )
\\ \label{eqn:ustar_bound}
    &=&   d \, \max_{1\le i\le d} K^i + h^{-1} d\, F \, e^{-\amu \zeta}
    \ll \frac{h^{-1}}{\sqrt{\psi}}.
\end{eqnarray}
Using the stationarity of $\tV^i$, we continue obtain as in \eqref{eqn:2111cap1}-\eqref{eqn:2111cap2} that
\begin{eqnarray*}
      &&  \P\left(\sigma \max_{1\le i\le d} \tV^i_t < 1 - \frac{\eps (t-\ustar)}{d}, \
          \zeta \le t\le k_r\right)
\\
			&\leq& \P(\ttau>k_r-\ustar) + \P(\ttau\le \zeta).
\end{eqnarray*}
The bound in \eqref{eqn:ustar_bound} shows that for every $\delta>0$ one has eventually
$k_r-\ustar>k_r - \delta h^{-1} \psi^{-1/2}= k_{r-\delta}$.
Using \eqref{eqn:zetasmall} and \eqref{eqn:moderateexittimescaling}
in Proposition \ref{prop:moderatelyslowpullingstat}, we obtain
\begin{eqnarray*}
   && \limsup_{\eps,\sigma\to 0}  \P(\tau >k_r) \leq  \lim_{\eps,\sigma\to 0}  \P(\ttau >k_{r-\delta})
   +  \lim_{\eps,\sigma\to 0}  \P(\ttau \le k_\rho) + \P(\Xi>F)
\\
   &=& e^{-a_0 e^{b(r-\delta)}} + [1-e^{-a_0 e^{b\rho}}] + \P(\Xi>F).
\end{eqnarray*}
By letting $\delta\searrow 0$, $\rho\to-\infty$, and $F\to+\infty$ we obtain the desired
\[
   \limsup_{\eps,\sigma\to 0}  \P(\tau > k_r) \le  e^{-a_0e^{br}}.
\]

\paragraph{Lower bound for the exit time.}
The proof of the lower bound is essentially the same as that for $\tau^i$. Using the necessary condition
for the exit \eqref{eqn:exitnecessaryshort} we see that for every $F>0$ and every real $r$
\begin{eqnarray*}
   \P(\tau\ge k_r) &\geq&
   \P\left(\sigma \tV^i_t < 1 - \frac{\eps t}{d} - \eps K^i - \sigma\, \Xi\, e^{-\amu t}, \
    0\le t\le k_r, 1\le i\le d\right)
\\
    &\geq& \P\left(\sigma \tV^i_t < 1 - \frac{\eps t}{d} - \eps K^i - \sigma\, F \, e^{-\amu t},\
    0\le t\le k_r, 1\le i\le d \right) - \P(\Xi>F)
\\
    &\geq& \P(B\cap D) - \P(\Xi>F) \ge \P(B)-\P(D^c) - \P(\Xi>F),
\end{eqnarray*}
where this time
\begin{eqnarray*}
  B &:=& \left\{  \sigma \tV^i_t < 1 - \frac{\eps t}{d} - \eps K^i - \sigma\, F\, e^{-\amu \zeta},\
       0\le t\le k_r, 1\le i\le d \right\};
\\
  D &:=& \left\{  \sigma \tV^i_t < 1 - \frac{\eps t}{d} - \eps K^i - \sigma\, F,\
      0\le t\le \zeta,  1\le i\le d \right\}.
\end{eqnarray*}
For the main part we have
\begin{eqnarray*}
  \P(B) &\ge& \P\left(\sigma \max_{1\le i\le d} \tV^i_t < 1 - \frac{\eps (t+\ustar)}{d}, \
       0\le t\le k_r \right)
\\
    &=& \P\left(\sigma \max_{1\le i\le d} \tV^i_{s-\ustar} < 1 - \frac{\eps s}{d}, \
       \ustar\le s \le k_r+\ustar \right)
\\
     &\geq& \P\left(\sigma \max_{1\le i\le d} \tV^i_{s-\ustar} < 1 - \frac{\eps s}{d}, \
       0 \le s \le k_r+\ustar \right),
\end{eqnarray*}
where $\ustar$ is the same as in \eqref{eqn:ustar}.

Using the stationarity of $\tV^i$, we continue the evaluation with
\[
  \P(B) \geq \P\left(\sigma \max_{1\le i\le d} \tV^i_{s} < 1 - \frac{\eps s}{d}, \
       0 \le s \le k_r+\ustar \right)
  = \P(\ttau > k_r+\ustar)
\]
Using the bound in \eqref{eqn:ustar_bound}, for every $\delta>0$ one has eventually
$k_r+\ustar< k_r + \delta h^{-1} \psi^{-1/2} = k_{r+\delta}$. Therefore, we obtain by using \eqref{eqn:moderateexittimescaling}
in Proposition \ref{prop:moderatelyslowpullingstat},
\[
   \liminf_{\eps,\sigma\to 0}  \P(B) \geq \lim_{\eps,\sigma\to 0}  \P(\ttau > k_{r+\delta}) = e^{-a_0 e^{b(r+\delta)}}.
\]

For the remainder term, we have
\begin{eqnarray*}
  \P(D) &\geq&  \P( \sigma  \max_{1\le i\le d} \tV^i_t < 1 - \frac{\eps (t+u_D)}{d},\
      0\le t\le \zeta )
\\
    &\geq&  \P( \sigma  \max_{1\le i\le d} \tV^i_{s-u_D} < 1 - \frac{\eps s}{d},\
      u_D\le s\le \zeta+u_D )
\\
    &\geq&  \P( \sigma  \max_{1\le i\le d} \tV^i_{s-u_D} < 1 - \frac{\eps s}{d},\
      0\le s\le \zeta+u_D ),
\end{eqnarray*}
where $u_D$ is as defined in \eqref{eqn:uD}. By stationarity,
\[
   \P(D) \geq \P( \sigma  \max_{1\le i\le d} \tV^i_{s} < 1 - \frac{\eps s}{d}\
      0\le s\le \zeta+u_D )
   = \P (\ttau > \zeta+u_D).
\]
Since for every $\rho$ we have eventually $\zeta +u_D = \zeta(1+o(1))<k_\rho$, it follows
from \eqref{eqn:moderateexittimescaling} that
\[
 \liminf_{\eps,\sigma\to 0} \P(D)\geq  \lim_{\eps,\sigma\to 0} \P(\ttau> k_\rho) =  e^{-a_0 e^{b\rho}} .
\]
In other words, we have $ \limsup_{\eps,\sigma\to 0} \P(D^c) \leq 1- e^{-a_0 e^{b\rho}}$. Therefore,
\[
   \liminf_{\eps,\sigma\to 0}  \P(\tau>k_r) \geq e^{-a_0 e^{b(r+\delta)}} - [1- e^{-a_0 e^{b\rho}}] -  \P(\Xi>F).
\]
By letting $\delta\searrow 0$, $\rho\to-\infty$, and $F\to+\infty$, we obtain the desired
\[
   \liminf_{\eps,\sigma\to 0}  \P(\tau>k_r) \ge  e^{-a_0 e^{br}}.
\]

\subsubsection{Limit theorem for the break position}
We have to prove that under conditions \eqref{eqn:moderate1} and \eqref{eqn:moderate2}
\[
   \lim_{\eps,\sigma\to 0} \P(\tau=\tau^i)= p_i:=
    \begin{cases}
    \frac{1}{d-1} & i\in\{2,\ldots,d-1\};\\
    \frac{1}{2(d-1)} & i\in\{1,d\}.
    \end{cases}
\]
Since $\sum_{i=1}^{d} p_i=1$ it is sufficient to prove the lower bound
\be \label{eqn:liminf_face}
   \liminf_{\eps,\sigma\to 0} \P(\tau=\tau^i) \ge  p_i.
\ee
Let us fix a large $r>0$ and $F>0$. Recall the notation $\zeta:=t_\ast/2=\tfrac{d}{2\eps}$. Using the necessary
and the sufficient conditions for exit \eqref{eqn:exitnecessaryshort} and \eqref{eqn:exitsufficientshort}, we have
\begin{eqnarray*}
   &&     \P(\tau=\tau^i) \geq  \P(\zeta \le \tau=\tau^i \le k_r) \
\\
   &\geq&   \P\Big(\tau\ge \zeta \textrm{ and } \exists t\in[\zeta,k_r] : \sigma \tV^i_t \ge 1 - \frac{\eps t}{d} + \eps K^i +\sigma\, \Xi\, e^{-\amu t}
\\
   && \qquad \textrm{ and } \sigma \tV^j_s < 1 - \frac{\eps s}{d} - \eps K^j - \sigma\, \Xi\, e^{-\amu s} , \
    \zeta \le s\le t, j\not=i \Big)
\\
    &\geq&   \P\Big(\exists t\in[\zeta,k_r] : \sigma \tV^i_t \ge 1 - \frac{\eps t}{d} + \eps K^i +\sigma\, F\, e^{-\amu \zeta}
\\
   && \qquad \textrm{ and } \sigma \tV^j_s < 1 - \frac{\eps s}{d} - \eps K^j - \sigma\, F\, e^{-\amu \zeta} , \
    \zeta \le s\le t, j\not=i \Big)
\\
   &&     - \P(\tau\le \zeta)-\P(\Xi>F)
\\
    &\geq&   \P\Big(\exists t\in[\zeta,k_r] : \sigma \tV^i_t \ge 1 - \frac{\eps (t-\ustar)}{d}
\\
   && \qquad \textrm{ and } \sigma \tV^j_s < 1 - \frac{\eps (s+\ustar)}{d} , \
    \zeta \le s\le t, j\not=i \Big)
\\
    &&     - \P(\tau\le \zeta)-\P(\Xi>F),
\end{eqnarray*}
with $\ustar$ defined in \eqref{eqn:ustar}. Furthermore,
\begin{eqnarray*}
   && \P\Big(\exists t\in[\zeta,k_r] : \sigma \tV^i_t \ge 1 - \frac{\eps (t-\ustar)}{d}
\\
   && \qquad \textrm{ and } \sigma \tV^j_s < 1 - \frac{\eps (s+\ustar)}{d}, \
    \zeta \le s\le t, j\not=i \Big)
\\
   &=&   \P\Big( \exists t'\in[\zeta+\ustar,k_r+\ustar] : \sigma \tV^i_{t'-\ustar} \ge 1 - \frac{\eps (t'-2\ustar)}{d}
\\
   && \qquad \textrm{ and } \sigma \max_{j\not=i} \tV^j_{s'-\ustar} < 1 - \frac{\eps s'}{d},
       \zeta+\ustar \le s'\le t' \Big)                                                                    \qquad\qquad  (t'=t+\ustar,s'=s+\ustar)
\\
   &\geq& \P\Big( \exists t'\in[\zeta+\ustar,k_r] : \sigma \tV^i_{t'-\ustar} \ge 1 - \frac{\eps (t'-2\ustar)}{d}
\\
   && \qquad \textrm{ and } \sigma \max_{j\not=i} \tV^j_{s'-\ustar} < 1 - \frac{\eps s'}{d},
       0 \le s'\le t' \Big).
\end{eqnarray*}
Using the stationarity of $\tV$, the last term equals
\begin{eqnarray*}
	&&\P\Big( \exists t\in[\zeta+\ustar,k_r] : \sigma \tV^i_{t} \ge 1 - \frac{\eps (t-2\ustar)}{d}
\\
   && \qquad \textrm{ and } \sigma \max_{j\not=i} \tV^j_{s} < 1 - \frac{\eps s}{d},
       0 \le s\le t \Big)
\\
   &\geq& \P\Big( \exists t\in[\zeta+\ustar,k_r] : \sigma_{*i} \tV^i_{t} \ge 1 - \frac{\eps t}{d}
\\
   && \qquad \textrm{ and } \sigma \max_{j\not=i} \tV^j_{s} < 1 - \frac{\eps s}{d},
       0 \le s\le t \Big)
\\
    &\geq& \P(\ttau_*=\ttau^i_*\le k_r) -  \P(\ttau_*\le \zeta+\ustar)
    \\
		&\geq&  \P(\ttau_*=\ttau^i_*)-  \P(\ttau^i_*> k_r) -  \P(\ttau_*\le \zeta+\ustar)
\end{eqnarray*}
where
\[
  \sigma_{*i}:=\sigma \min_{0\le t\le k_r}\frac{ 1 - \frac{\eps t}{d}}{1 - \frac{\eps (t-2\ustar)}{d}}
  = 1 - \frac{ \frac{2 \eps \ustar}{d}} {1 - \frac{\eps k_r}{d} + \frac{2\eps \ustar}{d}}.
\]
By the definition of $k_r$ we have
\[
  1 - \frac{\eps k_r}{d} = \frac{\gamma\sigma\sqrt{\psi}}{d} - \frac{r\sigma}{d\sqrt{\psi}}
  = \frac{\gamma\sigma\sqrt{\psi}}{d} \, (1+o(1)).
\]
It follows that
\[
 \frac{ \sigma_{*i}}{ \sigma} = 1 - \frac{2 \ustar \eps}{\gamma\sigma\sqrt{\psi}}(1+o(1))
 =  1 - \frac {2 \ustar}{\gamma\sqrt{\psi}}\, h\,(1+o(1))
 = 1+ o\left(\frac{1}{\psi}\right),
\]
as required in the assumption of Remark~\ref{rem:moderatesigmastar}
to Proposition~\ref{prop:moderatelyslowpullingstat}.

Now \eqref{eqn:moderatefacescaling}, \eqref{eqn:moderateexittimescaling_i2},
and \eqref{eqn:zetasmall} yield
\begin{eqnarray*}
  && \liminf_{\eps,\sigma\to 0} \P(\tau=\tau^i)
\\
  &\geq& \lim_{\eps,\sigma\to 0} \P(\ttau_*=\ttau^i_*)
	\\
	&& -  \lim_{\eps,\sigma\to 0} \P(\ttau^i_*> k_r)
   -\lim_{\eps,\sigma\to 0} \P(\tau\le \zeta) -\lim_{\eps,\sigma\to 0} \P(\ttau_*\le \zeta+\ustar) - \P(\Xi>F)
\\
  &=& p_i -  \exp\left( -a_i e^{br} \right) - \P(\Xi>F).
\end{eqnarray*}
Letting $r, F \to +\infty$, we obtain \eqref{eqn:liminf_face}.


\bigskip
{\bf Acknowledgement.} This research was supported by the co-ordinated grants of DFG (GO420/6-1)
and St.\,Petersburg State University (6.65.37.2017).

\end{document}